
\documentclass[12pt]{article}





\usepackage[utf8]{inputenc}
\usepackage[english]{babel}
\usepackage{amsmath}
\usepackage{nicefrac}
\usepackage{amsthm}
\usepackage{amsfonts}
\usepackage{amssymb}
\usepackage{verbatim}
\usepackage{color}
\usepackage[arrow, matrix, curve]{xy}
\usepackage{bbm}
\usepackage{eqparbox}
\usepackage[numbers]{natbib}
\usepackage{stmaryrd}
\usepackage{amssymb}
\usepackage{mathrsfs}
\usepackage{pictexwd,dcpic}
\usepackage{mathabx}


\DeclareMathSymbol{\leq}{\mathrel}{symbols}{20}
   
\DeclareMathSymbol{\geq}{\mathrel}{symbols}{21}

\newtheoremstyle{WreschTheoremstyle} 
                        {1.5em}    
                        {2.5em}    
                        {}         
                        {}         
                        {\bfseries}
                        {}        
                        {\newline} 
                        {\raisebox{0.6em}{\thmname{#1}\thmnumber{#2}\thmnote{ (#3)}}}


\newcommand{\R}{\mathbb{R}}
\newcommand{\C}{\mathbb{C}}
\newcommand{\N}{\mathbb{N}}

\newcommand{\K}{\mathcal{K}}

\newcommand{\e}{\varepsilon}
\newcommand{\Lb}{\mathcal{L}}
\renewcommand{\1}{\mathbbm{1}}
\newcommand{\dm}{\mathrm{d}}
\newcommand{\esssup}{\mathrm{ess}\sup}

\newcommand{\bfrac}[2]{\genfrac{}{}{0pt}{}{#1}{#2}}

\newtheorem{Theorem}{Theorem}[section]
\newtheorem{Proposition}[Theorem]{Proposition}
\newtheorem{Corollary}[Theorem]{Corollary}
\newtheorem{Lemma}[Theorem]{Lemma}
\newtheorem{Remark}[Theorem]{Remark}
\newtheorem{Definition}[Theorem]{Definition}

\numberwithin{equation}{section}


\makeatletter
\newcommand{\customlabel}[1]{%
     \stepcounter{ref}%
   \protected@write
\@auxout{}{\string\newlabel{#1}{{\thesatz.\arabic{ref}}{\thepage}{\thesatz.\arabic{ref}}{#1}{}}}%
   \hypertarget{#1}{\thesatz.\arabic{ref}}%
}
\makeatother



\topmargin 0.0cm
\oddsidemargin 0.2cm
\textwidth 16cm
\textheight 21cm
\footskip 1.0cm


\newenvironment{sciabstract}{\begin{quote}}{\end{quote}}




\newcounter{lastnote}


\title{Non-equilibrium dynamics for a Widom-Rowlinson type model with mutations}
\newcommand{\pdftitle} {Non-equilibrium dynamics for a Widom-Rowlinson type model with mutations}
\newcommand{\pdfauthor}{Martin Friesen}


\author{
Martin Friesen\footnote{Department of Mathematics, Bielefeld University, Germany, mfriesen@math.uni-bielefeld.de}
}

\usepackage[plainpages=false,pdfpagelabels=true,bookmarks=true,pdfauthor={\pdfauthor},
pdftitle={\pdftitle}]{hyperref}

\makeatletter
\def\HyPsd@CatcodeWarning#1{}
\makeatother




\begin{document}




\maketitle

\begin{sciabstract}\textbf{Abstract:}
A dynamical version of the Widom-Rowlinsom model in the continuum is considered. The dynamics is modelled by a spatial two-component birth-and-death Glauber process 
where particles, in addition, are allowed to change their type with density dependent rates.
An evolution of states is constructed as the unique weak solution to the associated Fokker-Planck equation. 
Such solution is obtained by means of its correlation functions which belong to a certain Ruelle space.
Existence of a unique invariant measure and ergodicity with exponential rate is established.
The mesoscopic limit is considered, it is related with the verification of the chaos preservation property.
\end{sciabstract}

\noindent \textbf{AMS Subject Classification: } 47N30; 60G55; 60K35; 82C22\\
\textbf{Keywords: }Widom-Rowlinson model, mutations, Fokker-Planck equation, ergodicity, mesoscopic limit

\section{Introduction}
The study of critical behaviour of complex systems and related invariant states is one of the central problems for statistical and mathematical physics.
Particular classes of complex systems can be modelled either as lattice or as continuous models. 
The Ising model is probably one of the most famous examples on the lattice. Its generalization, known as the Potts model, was introduced in \cite{P52}. 
It has been intensively studied on various lattices, cf., e.g., \cite{W82, CET05, GMRZ06} and the review paper \cite{GHM01}.
A detailed analysis of lattice models with general rates and applications can be found in the well-known book of Liggett \cite{LIGGETT}.
In contrast to lattice models, much less is known for their continuous counterparts, i.e. for continuous interacting particle systems. 

We suppose that all particles are located in $\R^d$, are identical by properties and, indistinguishable. 
The corresponding configuration space (= phase space)
\[
 \Gamma = \{ \gamma \subset \R^d \ | \ |\gamma \cap \Lambda| < \infty \text{ for all compacts } \Lambda \subset \R^d\}
\]
is a Polish space (see \cite{KK06}).
Here, $|A|$ stands for the number of elements in the set $A \subset \R^d$.
A natural analogue of the Potts model is in this case given by a Glauber dynamics with phase space $\Gamma$,
where particles randomly disappear and appear in the space.
The corresponding birth-and-death rates are chosen in such a way that the process has a grand canonical Gibbs measure as reversible measure. 
At this point there exist different choices of such rates. We consider, for simplicity, only the case of a constant death rate, say equal to $1$.
Consequently, the Markov (pre-)generator $L$ takes the heuristic form
\begin{align}\label{GLAUBER}
 (LF)(\gamma) = \sum \limits_{x \in \gamma}(F(\gamma \backslash x) - F(\gamma)) + z\int \limits_{\R^d}e^{-E_{\phi}(x,\gamma)}(F(\gamma \cup x) - F(\gamma))\dm x.
\end{align}
For simplicity of notation, we write $\gamma \backslash x$ and $\gamma \cup x$ instead of $\gamma \backslash \{x\}$ and $\gamma \cup \{x\}$, respectively. 
Here, $z > 0$ is the activity and for a symmetric pair potential $\phi: \R^d \longrightarrow \R$ satisfying the usual conditions such as stability, lower regularity
and, integrability,
\[
 E_{\phi}(x,\gamma^{\pm}) := \begin{cases} \sum \limits_{y \in \gamma^{\pm}}\phi(x-y), & \sum \limits_{y \in \gamma^{\pm}}|\phi(x-y)| < \infty \\ \infty, & \text{ otherwise} \end{cases}
\]
is the relative energy of the configuration $\gamma$ interacting with a particle at position $x \in \R^d$. 
This dynamics has a grand canonical Gibbs measure with activity $z$ and potential $\phi$ as symmetrizing measure.
A general treatment of the construction and uniqueness question for Gibbs measures can be found in \cite{CKP16}.
The associated equilibrium process was studied, by Dirichlet forms, in \cite{KL05}.
Existence of a spectral gap and ergodicity of the equilibrium process was shown in the latter work. The case of bounded volumes was considered in \cite{BCC02}.
As a consequence, we may start the Markov process from any initial state being absolutely continuous w.r.t. the symmetrizing (Gibbs) measure. 
In applications, however, we need to study the time evolution for different classes of initial states. Such initial states may be far from equilibrium.
The construction of non-equilibrium statistical dynamics, in terms of states and their associated correlation functions, can be found in \cite{FKK12G, FKKZ12}. 
Exponential ergodicity for the non-equilibrium dynamics was proved in \cite{KKM10}. Its proof is based on a detailed analysis of
solutions to the Kirkwood-Salzburg equations. Such equations are well-known for the characterization and study of Gibbs measures, cf. \cite{GRE71, MOR77, ZAG82}.
The same results have been obtained by stochastic differential equations in \cite{GKT06}.
The mesoscopic limit, however, was only derived in terms of statistical quantities such as states and correlation functions, cf. \cite{FKK11}. 
A stationary density for the associated kinetic equation solves in such a case the well-known Kirkwood-Monroe equation.

Two-component dynamics are modelled on the configuration space
\[
 \Gamma^2 = \{ \gamma = (\gamma^+, \gamma^-) \in \Gamma \times \Gamma \ | \ \gamma^+ \cap \gamma^- = \emptyset \}.
\]
It is a Polish space, when equipped with the restriction of the product topology.
Let $L_0$ be the Markov operator for two interacting Glauber dynamics. It is for every polynomially bounded cylinder functions $F$ given by
\begin{align*}
 (L_0F)(\gamma) &= \sum \limits_{x \in \gamma^+}(F(\gamma^+ \backslash x, \gamma^-) - F(\gamma))
                 +\sum \limits_{x \in \gamma^-}(F(\gamma^+, \gamma^- \backslash x) - F(\gamma))
 \\ &\ \ \ + z^+\int \limits_{\R^d}e^{-E_{\phi^-}(x, \gamma^-)} e^{-E_{\psi^+}(x,\gamma^+)}(F(\gamma^+ \cup x, \gamma^-) - F(\gamma))\dm x
 \\ &\ \ \ + z^-\int \limits_{\R^d}e^{-E_{\phi^+}(x, \gamma^+)} e^{-E_{\psi^-}(x,\gamma^-)}(F(\gamma^+, \gamma^- \cup x) - F(\gamma))\dm x.
\end{align*}
Here, $z^{\pm} > 0$ are the activities of $\pm$ particles and $\phi^{\pm}, \psi^{\pm}$ are symmetric, non-negative and, integrable.
The pair potentials $\phi^{\pm}$ describe the interaction of $+$ particles with particles of same type, whereas $\psi^{\pm}$ take interactions
with particles of different type into account. 

The particular case $\phi^{\pm} = 0$ is known as the dynamical Widom-Rowlinson model. 
It was introduced for a potential with hard-core in \cite{RW70}. An extension is provided by Potts-type systems in \cite{GH96, GMRZ06}. 
Its dynamical version has been recently studied in \cite{FKK15WRMODEL} where a local evolution of correlation functions 
was constructed and its mesoscopic limit was investigated. Moreover, the dynamical phase transition was shown in the mesoscopic limit of this dynamics.
Another model where the birth-and-death events are replaced by density dependent jumps can be found in \cite{BK16}.

We consider an extension to $\phi^{\pm} \neq 0$ where particles are, in addition, allowed to change their type. Such mutations are described by the Markov operator
\begin{align*}
 (VF)(\gamma) &= \sum \limits_{x \in \gamma^+}e^{-E_{\kappa^+}(x,\gamma^+ \backslash x)}e^{-E_{\tau^+}(x,\gamma^-)}(F(\gamma^+ \backslash x, \gamma^- \cup x) - F(\gamma))
 \\ &\ \ \ + \sum \limits_{x \in \gamma^-}e^{-E_{\kappa^-}(x,\gamma^- \backslash x)} e^{-E_{\tau^-}(x,\gamma^+)}(F(\gamma^+ \cup x, \gamma^- \backslash x) - F(\gamma)).
\end{align*}
The pair potentials $\kappa^{\pm} \geq 0$ take the interactions with particles of same type into account, whereas $\tau^{\pm}$ take the interactions
with particles of different type into account. The Markov operator for the birth-and-death dynamics including mutations is, therefore, given by $L = L_0 + V$.

This work is organized as follows. 
We construct a global evolution of states as the unique weak solution to the associated Fokker-Planck equation, see Proposition \ref{POTTSTH:05}. 
Ergodicity with exponential rate is established in the fourth section, see Proposition \ref{PROP:00}.
The mesoscopic limit is studied in the last section, see Theorem \ref{POTTSTH:06}, \ref{POTTSTH:07} and \ref{POTTSTH:08}.
We establish the chaos preservation property and derive the related system of integro-differential equations describing the 
effective densities of the particle system.

\section{Preliminaries}

\subsection{Space of finite configurations}
Let $\Gamma_0 = \{ \eta \subset \R^d \ | \ |\eta| < \infty\}$ be the space of all finite configuration over $\R^d$. It has the natural decomposition 
$\Gamma_0 = \bigsqcup \limits_{n=0}^{\infty}\Gamma_0^{(n)}$, where $\Gamma_0^{(0)} = \{ \emptyset \}$ and $\Gamma_0^{(n)} = \{ \eta \subset \R^d \ | \ |\eta| = n \}$.
Let $\widetilde{(\R^d)^n}$ be the collection of all $(x_1,\dots, x_n) \in (\R^d)^n$ such that $x_j \neq x_k$ whenever $j \neq k$. Then,
\[
 \mathrm{sym}_n: \widetilde{(\R^d)^n} \longrightarrow \Gamma_0^{(n)}, \ \ (x_1, \dots, x_n) \longmapsto \{ x_1, \dots, x_n \}
\]
is a bijection. We endow $\Gamma_0^{(n)}$ with the euclidean topology on $\widetilde{(\R^d)^n}$ and $\Gamma_0$ with the topology of disjoint units.
Then, $\Gamma_0$ is a locally compact Polish space. The Lebesgue-Poisson measure $\lambda$ on $\Gamma_0$ is defined by
\[
 \lambda = \delta_{\emptyset} + \sum \limits_{n=1}^{\infty}\frac{1}{n!}(\dm x)^{(n)},
\]
where $(\dm x)^{(n)} = \dm x^{\otimes n} \circ \mathrm{sym}_n^{-1}$ is the pullback of the Lebesgue measure on $\widetilde{(\R^d)^n}$.
Let $G: \Gamma_0 \times \Gamma_0 \longrightarrow \R$ be measurable, then
\begin{align}\label{IBP}
 \int \limits_{\Gamma_0}\sum \limits_{\xi \subset \eta}G(\xi, \eta \backslash \xi)\dm \lambda(\eta) = \int \limits_{\Gamma_0}\int \limits_{\Gamma_0}G(\xi,\eta)\dm \lambda(\xi)\dm \lambda(\eta),
\end{align}
whenever one side of the equality is finite for $|G|$, cf. \cite{KK02}. The Lebesgue exponential is for a measurable function $f: \R^d \longrightarrow \R$
defined by $e_{\lambda}(f;\eta) = \prod \limits_{x \in \eta}f(x)$ and satisfies, provided $f \in L^1(\R^d)$,
\[
 \int \limits_{\Gamma_0}e_{\lambda}(f;\eta)\dm \lambda(\eta) = \exp\left( \langle f \rangle \right),
\]
where $\langle f \rangle = \int \limits_{\R^d}f(x)\dm x$.

Let $\Gamma_0^2 = \{ \eta = (\eta^+, \eta^-) \in \Gamma_0 \times \Gamma_0 \ | \ \eta^+ \cap \eta^- = \emptyset \}$ be the two-component space of finite configurations.
It is a locally compact Polish space w.r.t. the product topology. For simplicity of notation, we extend all set-operations component-wise.
Namely, $\eta \cup \xi, \eta \backslash \xi, \xi \subset \eta$ stand for $(\eta^+ \cup \xi^+, \eta^- \cup \xi^-)$, $(\eta^+ \backslash \xi^+, \eta^- \backslash \xi^-)$
and, $\xi^+ \subset \eta^+, \xi^- \subset \eta^-$, where $\eta, \xi \in \Gamma_0^2$.
Since $\Gamma_0 \times \Gamma_0 \backslash \Gamma_0^2$ is a set of measure zero for $\lambda \otimes \lambda$, we define the Lebesgue-Poisson measure on $\Gamma_0^2$ by
$\lambda^2 := \lambda \otimes \lambda|_{\Gamma_0^2}$. Since no confusion can arise we keep the notation $\lambda$ for this measure. 
Thus, for any measurable non-negative function $G$
\[
 \int \limits_{\Gamma_0^2}G(\eta)\dm \lambda(\eta) = \int \limits_{\Gamma_0}\int \limits_{\Gamma_0}G(\eta^+, \eta^-)\dm \lambda(\eta^+, \eta^-).
\]
A function $G: \Gamma_0^2 \longrightarrow \R$ is said to have bounded support if there exists
$N \in \N$ and a compact $\Lambda \subset \R^d$ such that
\[
 G(\eta) = 0, \ \ \text{ whenever } \eta \cap \Lambda^c \neq \emptyset \text{ or } |\eta| > N.
\]
Here, we let $|\eta| := |\eta^+| + |\eta^-|$ and $\eta \cap \Lambda^c := (\eta^+ \cap \Lambda^c, \eta^- \cap \Lambda^c)$.
Denote by $B_{bs}(\Gamma_0^2)$ the space of all bounded functions having bounded support.

\subsection{Harmonic analysis on configuration spaces}
The following is a brief summary of \cite{KK02, FKO13}. Given $G \in B_{bs}(\Gamma_0^2)$, the $K$-transform is defined by
\begin{align}\label{POTTS:12}
 (KG)(\gamma) := \sum \limits_{\eta \Subset \gamma}G(\eta),\ \ \gamma \in \Gamma^2,
\end{align}
where $\Subset$ means that the sum runs over all finite subsets of $\gamma$. Let $\mathcal{FP}(\Gamma^2) := K(B_{bs}(\Gamma_0^2))$. For each $F \in \mathcal{FP}(\Gamma^2)$
there exists $A > 0$, $N \in \N$ and a compact $\Lambda \subset \R^d$ such that $F(\gamma) = F(\gamma \cap \Lambda)$ and
\[
 |F(\gamma)| \leq A (1 + |\gamma \cap \Lambda|)^N, \ \ \gamma \in \Gamma^2,
\]
i.e. $F$ is a polynomially bounded cylinder function. Here, $\gamma \cap \Lambda := (\gamma^+ \cap \Lambda, \gamma^- \cap \Lambda)$. 
The map $K: B_{bs}(\Gamma_0^2) \longrightarrow \mathcal{FP}(\Gamma^2)$ is a positivity preserving isomorphism with inverse
\[
 (K^{-1}F)(\eta) = \sum \limits_{\xi \subset \eta}(-1)^{|\eta \backslash \xi|}F(\xi).
\]
Denote by $K_0, K_0^{-1}$ the restrictions of $K, K^{-1}$ to functions on $\Gamma_0^2$.
Let $\mu$ be a probability measure on $\Gamma^2$ with finite local moments, i.e.
\[
 \int \limits_{\Gamma^2}|\gamma^+ \cap \Lambda|^n |\gamma^- \cap \Lambda|^n \dm \mu(\gamma) < \infty
\]
for all $n \in \N$ and all compacts $\Lambda \subset \R^d$. The correlation function $k_{\mu}$, provided it exists, is defined by
\[
 \int \limits_{\Gamma^2}KG(\gamma)\dm \mu(\gamma) = \int \limits_{\Gamma_0^2}G(\eta)k_{\mu}(\eta)\dm \lambda(\eta), \ \ G \in B_{bs}(\Gamma_0^2).
\]
The correlation function is uniquely determined by such property. Note that $K|G|$ is integrable w.r.t. $\mu$. 
The $K$-transform satisfies $\Vert KG \Vert_{L^1(\Gamma^2,\dm \mu)} \leq \Vert G \Vert_{L^1(\Gamma_0^2,k_{\mu}\dm \lambda)}$
for any $G \in B_{bs}(\Gamma_0^2)$. It can be extended to a bounded linear operator $K: L^1(\Gamma^2, \dm \mu) \longrightarrow L^1(\Gamma_0^2,k_{\mu}\dm \lambda)$
in such a way that \eqref{POTTS:12} holds for $\mu$-a.a. $\gamma \in \Gamma^2$.

Let $\rho: \R^d \longrightarrow [1,\infty)$ be a measurable, locally bounded function and take $\alpha = (\alpha^+, \alpha^-) \in \R^2$.
For simplicity of notation, we let 
\[
 e_{\lambda}(\rho;\eta)e^{\alpha|\eta|} := e_{\lambda}(\rho;\eta^+)e_{\lambda}(\rho;\eta^-)e^{\alpha^+|\eta^+|}e^{\alpha^-|\eta^-|}.
\]
Let $\Lb_{\alpha} := L^1(\Gamma_0^2, e^{\alpha|\cdot|}e_{\lambda}(\rho)\dm \lambda)$ with the norm
\[
 \Vert G \Vert_{\Lb_{\alpha}} = \int \limits_{\Gamma_0^2}|G(\eta)|e^{\alpha^+|\eta^+|}e^{\alpha^-|\eta^-|}e_{\lambda}(\rho;\eta^+)e_{\lambda}(\rho;\eta^-)\dm \lambda(\eta).
\]
Denote by $(\Lb_{\alpha})^*$ the dual Banach space to $\Lb_{\alpha}$. We use the duality
\[
 \langle G, k \rangle := \int \limits_{\Gamma_0^2}G(\eta)k(\eta)\dm \lambda(\eta), \ \ G \in \Lb_{\alpha}
\]
to identify $(\Lb_{\alpha})^*$ with the space of all equivalence class of functions $k$ with the norm
\[
 \Vert k \Vert_{\K_{\alpha}} = \esssup \limits_{\eta \in \Gamma_0^2}\ \frac{|k(\eta)|}{e_{\lambda}(\rho;\eta^+)e_{\lambda}(\rho;\eta^-)} e^{-\alpha^+|\eta^+|}e^{-\alpha^-|\eta^-|}.
\]
Let $\K_{\alpha}$ the Banach space of all such equivalence classes of functions $k$. Then, each $k \in \K_{\alpha}$ satisfies the Ruelle bound
\[
 |k(\eta)| \leq \Vert k \Vert_{\K_{\alpha}}e_{\lambda}(\rho;\eta^+)e_{\lambda}(\rho;\eta^-) e^{\alpha^+|\eta^+|}e^{\alpha^-|\eta^-|}, \ \ \eta \in \Gamma_0^2.
\]
A function $G \in \Lb_{\alpha}$ is called positive definite if $KG \geq 0$. Let $B_{bs}^+(\Gamma_0^2)$ be the cone of all positive definite functions in $B_{bs}(\Gamma_0^2)$.
A function $k \in \K_{\alpha}$ is called positive definite if 
\[
 \langle G,k \rangle = \int \limits_{\Gamma_0^2}G(\eta)k(\eta)\dm \lambda(\eta) \geq 0, \ \ G \in B_{bs}^+(\Gamma_0^2).
\]
Note that any positive definite function $k$ is non-negative.
\begin{Theorem}\cite{F11TC}
 Let $k \in \K_{\alpha}$. The following are equivalent.
 \begin{enumerate}
  \item There exists a unique probability measure $\mu$ having finite local moments such that $k$ is its correlation function, i.e. $k_{\mu} = k$.
  \item $k(\emptyset, \emptyset) = 1$ and $k$ is positive definite.
 \end{enumerate}
\end{Theorem}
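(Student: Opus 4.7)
The plan is as follows. The direction (1) $\Rightarrow$ (2) is routine: the defining identity of $k_\mu$ applied to $G = \1_{\{(\emptyset,\emptyset)\}}$ reads $KG \equiv 1$ on $\Gamma^2$, hence $k(\emptyset,\emptyset) = \int_{\Gamma^2} 1\, d\mu = 1$, while for any $G \in B_{bs}^+(\Gamma_0^2)$ the same identity gives
\[
\langle G, k\rangle = \int_{\Gamma^2} KG\, d\mu \geq 0,
\]
which proves positive definiteness of $k$.

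The substance lies in (2) $\Rightarrow$ (1), which is a two-component analogue of Lenard's reconstruction theorem for point processes, adapted to the Ruelle scale $\K_\alpha$. For each compact $\Lambda \subset \R^d$, I would set $\Gamma_\Lambda^2 := \{\eta \in \Gamma_0^2 : \eta \subset \Lambda\}$ and define a candidate local density
\[
p_\Lambda(\eta) := \int_{\Gamma_\Lambda^2} (-1)^{|\xi|}\, k(\eta \cup \xi)\, d\lambda(\xi), \qquad \eta \in \Gamma_\Lambda^2.
\]
The Ruelle bound supplies an integrable majorant, giving $p_\Lambda \in L^1(\Gamma_\Lambda^2, d\lambda)$. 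A twofold application of \eqref{IBP} confirms $\int_{\Gamma_\Lambda^2} p_\Lambda\, d\lambda = k(\emptyset,\emptyset) = 1$ and that the candidate measures $\mu_\Lambda := p_\Lambda\, d\lambda$ are spatially consistent: for $\Lambda \subset \Lambda'$, the pushforward of $\mu_{\Lambda'}$ under the restriction $\eta' \mapsto \eta' \cap \Lambda$ coincides with $\mu_\Lambda$. A Kolmogorov-type extension on the Polish space $\Gamma^2$ then delivers the desired probability measure $\mu$.

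The main obstacle is non-negativity of $p_\Lambda$. The key step is a duality identity, valid for any non-negative $G \in B_{bs}(\Gamma_0^2)$ supported in $\Gamma_\Lambda^2$: applying \eqref{IBP} inside the definition of $p_\Lambda$ gives
\[
\int_{\Gamma_\Lambda^2} G(\eta)\, p_\Lambda(\eta)\, d\lambda(\eta) = \langle H, k\rangle, \qquad H(\zeta) := \1_{\zeta \subset \Lambda}\sum_{\eta \subset \zeta}(-1)^{|\zeta\backslash\eta|} G(\eta).
\]
A short Möbius-inversion computation shows that $(KH)(\gamma) = G(\gamma \cap \Lambda) \geq 0$, so $H \in B_{bs}^+(\Gamma_0^2)$ and the right-hand side is non-negative by positive definiteness of $k$. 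Letting $G$ range over indicators of measurable subsets of $\Gamma_\Lambda^2$ yields $p_\Lambda \geq 0$ $\lambda$-almost everywhere.

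It remains to verify $k_\mu = k$. For $G \in B_{bs}(\Gamma_0^2)$ supported in some $\Gamma_\Lambda^2$, inserting the definition of $\mu$ via $\mu_\Lambda = p_\Lambda\, d\lambda$ and inverting the Möbius relation just used reduces $\int_{\Gamma^2} KG\, d\mu$ to $\langle G, k\rangle$, yielding $k_\mu = k$. Uniqueness of $\mu$ follows because probability measures with finite local moments are determined by their pairings against $\mathcal{FP}(\Gamma^2) = K(B_{bs}(\Gamma_0^2))$, and these pairings are fixed by $k$ through the same duality.
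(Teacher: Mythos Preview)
The paper does not give its own proof of this theorem: it is quoted from \cite{F11TC} and stated without argument, so there is nothing in the text to compare your proposal against.

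That said, your sketch is the standard Lenard-type reconstruction and is correct in outline. The implication (1)$\Rightarrow$(2) is exactly as you say. For (2)$\Rightarrow$(1), the local density
\[
p_\Lambda(\eta) = \int_{\Gamma_\Lambda^2} (-1)^{|\xi|}\, k(\eta \cup \xi)\, d\lambda(\xi)
\]
is well defined by the Ruelle bound in $\K_\alpha$ (indeed, $\rho$ locally bounded and $\Lambda$ compact make the majorant integrable). Your key identity $\int G\, p_\Lambda\, d\lambda = \langle H, k\rangle$ with $H = K_0^{-1}\bigl(G(\cdot \cap \Lambda)\bigr)$ is correct, and the M\"obius computation $KH(\gamma) = G(\gamma \cap \Lambda)$ goes through in the two-component setting because the set operations are defined component-wise. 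Consistency of the family $(\mu_\Lambda)_\Lambda$ follows from the same identity, since for $\Lambda \subset \Lambda'$ the function $H$ obtained from $\widetilde G(\eta') := G(\eta' \cap \Lambda)$ on $\Gamma_{\Lambda'}^2$ coincides with the one obtained from $G$ on $\Gamma_\Lambda^2$. The projective limit on the Polish space $\Gamma^2$ then yields $\mu$, and the verification $k_\mu = k$ and uniqueness are as you describe. One small point worth making explicit in a full write-up: the Ruelle bound also guarantees that $\mu$ has finite local moments (in fact exponential ones), which is part of statement (1).
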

Let $\mathcal{P}_{\alpha}$ be the collection of all probability measures $\mu$ with finite local moments such that for each $\mu \in \mathcal{P}_{\alpha}$
there exists a correlation function $k_{\mu} \in \K_{\alpha}$. A metric is given by
\[
 d_{\alpha}(\mu, \nu) := \Vert k_{\mu} - k_{\nu} \Vert_{\K_{\alpha}}.
\]
Then, $(\mathcal{P}_{\alpha},d_{\alpha})$ is a complete metric space which is not separable. 
Note that for each $F \in \mathcal{FP}(\Gamma^2)$ there exists a constant $C_{\alpha}(F) > 0$ such that
\[
 \left| \int \limits_{\Gamma^2}F(\gamma)\dm \mu(\gamma) - \int \limits_{\Gamma^2}F(\gamma)\dm \nu(\gamma)\right| \leq d_{\alpha}(\mu, \nu)C_{\alpha}(F).
\]

\section{Evolution of states}
\subsection{Assumptions}
Suppose that $\phi^{\pm}, \psi^{\pm}, \kappa^{\pm}, \tau^{\pm} \geq 0$ are symmetric, there exists $\alpha = (\alpha^+, \alpha^-) \in \R^2$ and a measurable,
locally bounded function $\rho: \R^d \longrightarrow [1,\infty)$ such that the conditions below are satisfied.
\begin{enumerate}
 \item There exists $h: \R^d \longrightarrow (0,\infty)$ with $h \cdot \rho \in L^1(\R^d)$ such that
 \[
  \sup \limits_{y \in \R^d}\ \frac{g(x-y)}{h(y)} =: C_x < \infty, \ \ x \in \R^d
 \]
 for all $g \in \{ \phi^{\pm}, \psi^{\pm}, \kappa^{\pm}, \tau^{\pm}\}$.
 \item Let $C_{g}(x,\alpha^{\pm}) := \exp\left( e^{\alpha^{\pm}}\int \limits_{\R^d}(1 - e^{-g(x-y)})\rho(y)\dm y \right)$, then
  \begin{align*}
   \sup \limits_{x \in \R^d}\ \left( e^{-\alpha^+}C_{\phi^+}(x,\alpha^+)C_{\psi^+}(x,\alpha^-) + C_{\kappa^-}(x,\alpha^-)C_{\tau^-}(x,\alpha^+)\right) &< 2
   \\ \sup \limits_{x \in \R^d}\ \left( e^{-\alpha^-}C_{\phi^-}(x,\alpha^-)C_{\psi^-}(x,\alpha^+) + C_{\kappa^+}(x,\alpha^+)C_{\tau^+}(x,\alpha^-)\right) &< 2
   \\ \sup \limits_{x \in \R^d}\  C_{\kappa^+}(x,\alpha^+)C_{\tau^+}(x,\alpha^-)e^{\alpha^- - \alpha^+} &< 1
   \\ \sup \limits_{x \in \R^d}\ C_{\kappa^-}(x,\alpha^-)C_{\tau^-}(x,\alpha^+)e^{\alpha^+ - \alpha^-} &< 1.
  \end{align*}
\end{enumerate}
The first condition shows that 
\[
 g(x) \leq C_0 h(x) \leq C_0 h(x)\rho(x), \ \ x \in \R^d
\]
and hence $g$ is integrable. Moreover, we have
\[
 (1 - e^{-g(x-y)})\rho(y) \leq g(x-y)\rho(y) \leq C_x h(y)\rho(y), \ \ y \in \R^d
\]
and hence $C_g(x,\alpha^{\pm})$ is well-defined for all $x \in \R^d$.
\begin{Remark}
 The first condition can be replaced by $g \in C_c(\R^d)$, i.e. $g$ is continuous with compact support.
\end{Remark}
In contrast to classical probability theory, we consider, instead of individual trajectories, 
the associated statistical description. Namely, given an initial state $\mu_0 \in \mathcal{P}_{\alpha}$, 
we seek for a family of states $(\mu_t)_{t \geq 0}$ satisfying the Fokker-Planck equation
\begin{align}\label{FPE}
 \frac{\dm}{\dm t}\int \limits_{\Gamma^2}F(\gamma)\dm \mu_t(\gamma) = \int \limits_{\Gamma^2}LF(\gamma)\dm \mu_t(\gamma), \ \ \mu_t|_{t=0} = \mu_0.
\end{align}

\subsection{Evolution of quasi-observables and correlation functions}
Introduce the cumulative death intensity
\[
 M(\eta) = |\eta^+| + |\eta^-| + \sum \limits_{x \in \eta^+}e^{-E_{\kappa^+}(x, \eta^+ \backslash x)}e^{-E_{\tau^+}(x,\eta^-)} + \sum \limits_{x \in \eta^-}e^{-E_{\kappa^-}(x,\eta^- \backslash x)}e^{-E_{\tau^-}(x,\eta^+)}
\]
and let $D(\widehat{L}) = \{ G \in \Lb_{\alpha} \ | \ M \cdot G \in \Lb_{\alpha}\}$. The operator $\widehat{L} := K_0^{-1}LK_0$
is well-defined on $D(\widehat{L})$ and has the form $\widehat{L} = A + B$, where $(AG)(\eta) = -M(\eta)G(\eta)$. The second operator is given by
\begin{align}
 &\ \notag (BG)(\eta) = z^+ \sum \limits_{\xi \subset \eta}\int \limits_{\R^d}e^{-E_{\phi^+}(x,\xi^+)}e^{-E_{\psi^+}(x,\xi^-)}f_x(\phi^+;\eta^+ \backslash \xi^+)f_x(\psi^+;\eta^- \backslash \xi^-)G(\xi^+ \cup x, \xi^-)\dm x
  \\ \notag &\ \ \ + z^- \sum \limits_{\xi \subset \eta}\int \limits_{\R^d}e^{-E_{\phi^-}(x,\xi^-)}e^{-E_{\psi^-}(x,\xi^+)}f_x(\phi^-;\eta^- \backslash \xi^-)f_x(\psi^-;\eta^+ \backslash \xi^+)G(\xi^+, \xi^- \cup x)\dm x
  \\ \notag &\ \ \ + \sum \limits_{\xi \subset \eta}\sum \limits_{x \in \xi^+}e^{-E_{\kappa^+}(x,\xi^+ \backslash x)}e^{-E_{\tau^+}(x,\xi^-)}f_x(\kappa^+;\eta^+ \backslash \xi^+)f_x(\tau^+;\eta^- \backslash \xi^-)G(\xi^+ \backslash x, \xi^- \cup x)
  \\ \notag &\ \ \ + \sum \limits_{\xi \subset \eta}\sum \limits_{x \in \xi^-}e^{-E_{\kappa^-}(x,\xi^- \backslash x)}e^{-E_{\tau^-}(x,\xi^+)}f_x(\kappa^-;\eta^- \backslash \xi^-)f_x(\tau^-;\eta^+ \backslash \xi^+)G(\xi^+ \cup x, \xi^- \backslash x)
  \\ \label{POTTS:03} &\ \ \ - \sum \limits_{\bfrac{\xi \subset \eta}{\xi \neq \eta}}\sum \limits_{x \in \xi^+}e^{-E_{\kappa^+}(x,\xi^+ \backslash x)}e^{-E_{\tau^+}(x,\xi^-)}f_x(\kappa^+;\eta^+ \backslash \xi^+)f_x(\tau^+;\eta^- \backslash \xi^-)G(\xi)
  \\ \label{POTTS:04} &\ \ \ - \sum \limits_{\bfrac{\xi \subset \eta}{\xi \neq \eta}}\sum \limits_{x \in \xi^-}e^{-E_{\kappa^-}(x,\xi^- \backslash x)}e^{-E_{\tau^-}(x,\xi^+)}f_x(\kappa^-;\eta^- \backslash \xi^-)f_x(\tau^-;\eta^+ \backslash \xi^+)G(\xi)
\end{align}
where $f_x(g;\eta) := e_{\lambda}(e^{-g(x-\cdot)}-1;\eta) = \prod \limits_{y \in \eta}\left( e^{-g(x - y)} - 1\right)$.
Let $\1^*(\eta) = \begin{cases}1, & |\eta| = 0\\ 0, & \text{otherwise}\end{cases}$.
\begin{Theorem}\label{POTTSTH:00}
 The operator $(\widehat{L}, D(\widehat{L}))$ is the generator of an analytic semigroup $(\widehat{T}(t))_{t \geq 0}$ of contractions on $\Lb_{\alpha}$.
 Moreover, $\widehat{T}(t)\1^* = \1^*$ holds and $B_{bs}(\Gamma_0^2)$ is a core.
\end{Theorem}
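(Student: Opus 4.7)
The decomposition $\widehat{L} = A + B$ suggests treating $A$ (pointwise multiplication by the non-positive function $-M$) as the dominant part and $B$ as a perturbation. By standard $L^1$-theory of multiplication operators, $A$ generates on $\Lb_\alpha$ the analytic contraction $C_0$-semigroup $(e^{tA}G)(\eta) = e^{-tM(\eta)}G(\eta)$, which is positivity-preserving and has natural domain $D(A) = \{G \in \Lb_\alpha : MG \in \Lb_\alpha\} = D(\widehat{L})$.

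The key estimate is relative $A$-boundedness of $B$ with relative bound strictly less than one. I would bound each of the six summands in $B$ in the $\Lb_\alpha$-norm as follows. Apply the identity (\ref{IBP}) to convert the $\sum_{\xi \subset \eta}$ inside the $d\lambda(\eta)$-integration into an iterated integral over $\xi$ and $\eta_0 := \eta \setminus \xi$. Then the Lebesgue--Poisson exponential formula $\int e_\lambda(f;\eta_0) d\lambda(\eta_0) = \exp(\langle f \rangle)$, applied to the $\eta_0^\pm$-integration of $|f_x(g;\eta_0^\pm)|$ weighted by $e^{\alpha^\pm|\eta_0^\pm|}e_\lambda(\rho;\eta_0^\pm)$, produces precisely the constants $C_g(x,\alpha^\pm)$. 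A further substitution of the form $\zeta^+ = \xi^+ \cup x$ for the $+$-birth term (with mirror substitutions for the mutation terms, and no substitution for the loss corrections \eqref{POTTS:03}--\eqref{POTTS:04}) converts the $dx$-integral into a sum $\sum_{x \in \zeta^\pm}$, paying the prices $e^{-\alpha^\pm}$ (for births) or $e^{\alpha^\pm - \alpha^\mp}$ (for gain mutations) from the weight $e^{\alpha|\cdot|}e_\lambda(\rho)$. Each resulting piece is dominated by $\sup_x[\cdot] \cdot \||\eta^\pm|G\|_{\Lb_\alpha}$ (births) or by $\sup_x[\cdot]$ times the $\Lb_\alpha$-norm of $G$ weighted by the corresponding mutation intensity in $M$. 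The four structural inequalities of Assumption~2 are designed so that these six bounds combine to give
$$\|BG\|_{\Lb_\alpha} \leq q\,\|MG\|_{\Lb_\alpha}, \qquad G \in D(\widehat{L}),$$
for some $q < 1$; the ``$<2$'' inequalities split $|\eta^\pm|$ between two competing $B$-contributions, while the ``$<1$'' inequalities compare a single mutation $B$-term against the mutation part of $M$.

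Once this estimate is in hand, a standard perturbation theorem for sectorial operators yields that $(\widehat{L}, D(\widehat{L}))$ generates an analytic $C_0$-semigroup $(\widehat{T}(t))_{t \geq 0}$ on $\Lb_\alpha$. For contractivity I would verify $L^1$-dissipativity: for $G \in D(\widehat{L})$ and $h(\eta) := \mathrm{sgn}(G(\eta))$ (which realizes the norming functional of $G$ in the duality $\Lb_\alpha$--$\K_\alpha$ with measure $d\nu_\alpha := e^{\alpha|\cdot|}e_\lambda(\rho)d\lambda$),
$$\int_{\Gamma_0^2}\widehat{L}G \cdot h\, d\nu_\alpha \leq -\int_{\Gamma_0^2} M|G|\,d\nu_\alpha + \|BG\|_{\Lb_\alpha} \leq -(1-q)\|MG\|_{\Lb_\alpha} \leq 0.$$
Combined with analyticity (whence $\mathrm{Range}(\lambda - \widehat{L}) = \Lb_\alpha$ for all sufficiently large $\lambda > 0$), the Lumer--Phillips theorem upgrades the semigroup to a contraction semigroup.

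The identity $\widehat{T}(t)\1^* = \1^*$ follows by checking $\widehat{L}\1^* = 0$: $M(\emptyset,\emptyset) = 0$ gives $A\1^* = 0$, while each of the six $B$-terms evaluates $G = \1^*$ either at a configuration of positive cardinality (the birth terms and the two gain mutations) or carries a factor $\sum_{x \in \xi^\pm}$ forcing $|\xi| \geq 1$ (the two loss corrections), and $\1^*$ vanishes in both cases. That $B_{bs}(\Gamma_0^2)$ is a core follows from its inclusion in $D(\widehat{L})$, its density in $\Lb_\alpha$, and its invariance under a cut-off approximation of $M$ in the resolvent of $A$, which yields graph-norm density in $D(\widehat{L})$. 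The principal obstacle is the combinatorial bookkeeping in the second paragraph: verifying that the six term-by-term inequalities assemble precisely into the single relative bound $q < 1$ via the four asymmetric structural assumptions of Assumption~2. The remaining semigroup-theoretic steps are standard.
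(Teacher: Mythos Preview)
Your proposal is correct and shares with the paper the same decisive ingredient: the relative bound $\|BG\|_{\Lb_\alpha}\le q\,\|MG\|_{\Lb_\alpha}$ with $q<1$, obtained via \eqref{IBP} and the Lebesgue--Poisson exponential identity, assembled from the four inequalities in Assumption~2 exactly as you describe. The paper carries out this bookkeeping explicitly by introducing a positive majorant $B'$ (replacing $f_x$ by $|f_x|$ and the minus signs in \eqref{POTTS:03}--\eqref{POTTS:04} by plus signs) and computing a function $\beta(\alpha;\eta)\le a(\alpha)M(\eta)$ with $a(\alpha)<1$; your $q$ is their $a(\alpha)$.

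The difference lies in the semigroup black boxes invoked after the estimate. You use a sectorial perturbation argument (exploiting that $A$ is multiplication by a non-positive real function, hence has resolvent bound $\|A(\lambda-A)^{-1}\|\le 1$ on the whole open right half-plane, so an $A$-bound $<1$ suffices for analyticity) and then Lumer--Phillips for contractivity. The paper instead routes through $L^1$-positivity machinery: it first applies Thieme--Voigt substochastic perturbation to $A+B'$ (using $\int(A+\tfrac{1}{r}B')G\,\dm\nu_\alpha\le 0$ for $0\le G$), then Arendt--Rhandi \cite{AR91} Theorem~1.1 for analyticity of the $(A+B')$-semigroup, and finally Arendt--Rhandi Theorem~1.2 to transfer the conclusion from $B'$ to $B$ via $|BG|\le B'|G|$. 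Your route is more elementary and self-contained; the paper's route buys positivity of the majorant semigroup for free and avoids having to check the sectorial angle condition explicitly. The arguments for $\widehat{T}(t)\1^*=\1^*$ and for $B_{bs}(\Gamma_0^2)$ being a core are essentially the same in both (the paper makes the truncation $G_n=G\cdot\1_{\{|\eta|\le n,\ \eta\subset\Lambda_n\}}$ explicit and closes by dominated convergence).
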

\begin{proof}
 Observe that $(A, D(\widehat{L}))$ is the generator of a positive, analytic semigroup of contractions on $\Lb_{\alpha}$. 
 Let $B'$ be the operator $B$, where $f_x(g;\eta)$ is replaced by $\prod \limits_{y \in \eta}\left| e^{-g(x - y)} - 1\right|$
 and in the last two terms, see \eqref{POTTS:03} and \eqref{POTTS:04}, the $-$ is replaced by $+$.
 Then, $B'$ is a positive linear operator on $D(\widehat{L})$ and satisfies $|BG| \leq B'|G|$, for any $G \in D(\widehat{L})$. 
 For any $0 \leq G \in D(\widehat{L})$, see $\rho \geq 1$, we get
 \begin{align*}
  \int \limits_{\Gamma_0^2}B'G(\eta)e_{\lambda}(\rho;\eta)e^{\alpha|\eta|}\dm \lambda(\eta)
  \leq \int \limits_{\Gamma_0^2} \beta(\alpha;\eta)G(\eta)e_{\lambda}(\rho;\eta)e^{\alpha|\eta|}\dm \lambda(\eta),
 \end{align*}
 where
 \begin{align*}
  \beta(\alpha;\eta) &= e^{-\alpha^+}C_{\phi^+}(\alpha^+)C_{\psi^+}(\alpha^-)|\eta^+| + e^{-\alpha^-}C_{\phi^-}(\alpha^-)C_{\psi^-}(\alpha^+)|\eta^-|
  \\ &\ \ \ + C_{\kappa^+}(\alpha^+)C_{\tau^+}(\alpha^-)e^{\alpha^- - \alpha^+}\sum \limits_{x \in \eta^+}e^{-E_{\kappa^+}(x,\eta^+ \backslash x)}e^{-E_{\tau^+}(x,\eta^-)}
  \\ &\ \ \ + C_{\kappa^-}(\alpha^-)C_{\tau^-}(\alpha^+)e^{\alpha^+ - \alpha^-}\sum \limits_{x \in \eta^-}e^{-E_{\kappa^-}(x,\eta^- \backslash x)}e^{-E_{\tau^-}(x,\eta^+)}
  \\ &\ \ \ + (C_{\kappa^+}(\alpha^+)C_{\tau^+}(\alpha^-) - 1)\sum \limits_{x \in \eta^-}e^{-E_{\kappa^+}(x,\eta^+)}e^{-E_{\tau^+}(x,\eta^- \backslash x)}
  \\ &\ \ \ + (C_{\kappa^-}(\alpha^-)C_{\tau^-}(\alpha^+) - 1)\sum \limits_{x \in \eta^+}e^{-E_{\kappa^-}(x,\eta^-)}e^{-E_{\tau^-}(x,\eta^+ \backslash x)}
  \\ &\leq \left( e^{-\alpha^+}C_{\phi^+}(\alpha^+)C_{\psi^+}(\alpha^-) + C_{\kappa^-}(\alpha^-)C_{\tau^-}(\alpha^+) - 1\right)|\eta^+|
  \\ &\ \ \ + \left( e^{-\alpha^-}C_{\phi^-}(\alpha^-)C_{\psi^-}(\alpha^+) + C_{\kappa^+}(\alpha^+)C_{\tau^+}(\alpha^-) - 1\right)|\eta^-|
  \\ &\ \ \ + C_{\kappa^+}(\alpha^+)C_{\tau^+}(\alpha^-)e^{\alpha^- - \alpha^+} \sum \limits_{x \in \eta^+}e^{-E_{\kappa^+}(x,\eta^+ \backslash x)}e^{-E_{\tau^+}(x,\eta^-)}
  \\ &\ \ \ + C_{\kappa^-}(\alpha^-)C_{\tau^-}(\alpha^+)e^{\alpha^+ - \alpha^-} \sum \limits_{x \in \eta^-}e^{-E_{\kappa^-}(x,\eta^- \backslash x)}e^{-E_{\tau^-}(x,\eta^+)}
  \\ &\leq a(\alpha)M(\eta)
 \end{align*}
 for some $a(\alpha) \in (0,1)$. Consequently, we obtain
 \begin{align}\label{POTTS:09}
  \int \limits_{\Gamma_0^2}B'G(\eta)e^{\alpha|\eta|}e_{\lambda}(\rho;\eta)\dm \lambda(\eta) \leq a(\alpha)\int \limits_{\Gamma_0^2}M(\eta)G(\eta)\dm \lambda(\eta).
 \end{align}
 Take $r \in (0,1)$ such that $\frac{a(\alpha)}{r} < 1$, then
 \[
  \int \limits_{\Gamma_0^2}\left( A + \frac{1}{r}B'\right)G(\eta)e_{\lambda}(\rho;\eta)e^{\alpha|\eta|}\dm \lambda(\eta) \leq 0, \ \ 0 \leq G \in D(\widehat{L}).
 \]
 Hence, $(A + B', D(\widehat{L}))$ is the generator of a positive, strongly continuous semigroup of contractions, cf. \cite[Theorem 2.2]{TV06}.
 By \cite[Theorem 1.1]{AR91}, this semigroup is analytic.
 Moreover, \cite[Theorem 1.2]{AR91} implies that $(A + B, D(\widehat{L}))$ is the generator of an analytic semigroup of contractions on $\Lb_{\alpha}$.
 Since $\1^* \in D(\widehat{L})$ and $\widehat{L}\1^* = 0$, it follows that $\widehat{T}(t)\1^* = \1^*$. 
 In order to see that $B_{bs}(\Gamma_0^2)$ is a core, let $(\Lambda_n)_{n \in \N}$ be an increasing sequence of compacts in $\R^d$ and $G \in D(\widehat{L})$.
 Define 
 \[
  G_n(\eta) := \begin{cases}G(\eta), & |\eta| \leq n \text{ and } \eta \subset \Lambda_n\\ 0, & \text{ otherwise }\end{cases}, 
 \]
 then $G_n \in B_{bs}(\Gamma_0)$, $G_n \longrightarrow G$ a.e. as $n \to \infty$ and $|G_n| \leq |G|$, for all $n \in \N$. 
 Dominated convergence yields $G_n \longrightarrow G$ in $\Lb_{\alpha}$. Moreover, dominated convergence also implies
 $\widehat{L}G_n \longrightarrow \widehat{L}G$ a.e., as $n \to \infty$. Since $|MG_n| \leq M|G|$ and 
 $|BG_n| \leq B'|G_n| \leq B'|G|$ applying again dominated convergence shows that $\widehat{L}G_n \longrightarrow \widehat{L}G$ in $\Lb_{\alpha}$.
 Hence, $(\widehat{L}, D(\widehat{L}))$ is the closure of $(\widehat{L}, B_{bs}(\Gamma_0^2))$.
\end{proof}
Denote by $\widehat{T}(t)^*$ the adjoint semigroup on $\K_{\alpha}$ and by $(\widehat{L}^*, D(\widehat{L}^*))$ its generator. This operator is, by definition,
the adjoint operator to $\widehat{L}$. Let $g_0, g_1: \R^d \longrightarrow \R_+$ be given with $(1- e^{-g_j(x-\cdot)})\rho \in L^1(\R^d)$ for all $x \in \R^d$ and $j= 0,1$.
Let $\mathcal{Q}_x(g_0,g_1)$ be a linear operator on $\K_{\alpha}$ given by
\[
 \mathcal{Q}_x(g_0, g_1)k(\eta) = \int \limits_{\Gamma_0^2}f_x(g_0;\xi^+)f_x(g_1;\xi^-)k(\eta \cup \xi)\dm \lambda(\xi), \ \ x \in \R^d.
\]
This operator satisfies
\begin{align}\label{POTTS:14}
 |\mathcal{Q}_x(g_0,g_1)k(\eta)| \leq e^{\alpha|\eta|}e_{\lambda}(\rho;\eta)C_{g_0}(x, \alpha^+) C_{g_1}(x, \alpha^-)\Vert k \Vert_{\K_{\alpha}}.
\end{align}
Let $L^{\Delta}$ be given by
\begin{align*}
 (L^{\Delta}k)(\eta) = &- |\eta|k(\eta)
 \\ &- \sum \limits_{x \in \eta^-}e^{-E_{\kappa^+}(x,\eta^+)}e^{-E_{\tau^+}(x,\eta^- \backslash x)}\mathcal{Q}_x(\kappa^+, \tau^+)k(\eta)
 \\ &- \sum \limits_{x \in \eta^+}e^{-E_{\kappa^-}(x,\eta^-)}e^{-E_{\tau^-}(x,\eta^+ \backslash x)}\mathcal{Q}_x(\tau^-,\kappa^+)k(\eta)
 \\ &+ z^+ \sum \limits_{x \in \eta^+}e^{-E_{\phi^+}(x,\eta^+ \backslash x)}e^{-E_{\psi^+}(x,\eta^-)}\mathcal{Q}_x(\phi^+,\psi^+)k(\eta^+ \backslash x, \eta^-)
 \\ &+ z^- \sum \limits_{x \in \eta^-}e^{-E_{\phi^-}(x,\eta^- \backslash x)}e^{-E_{\psi^-}(x,\eta^+)}\mathcal{Q}_x(\psi^-, \phi^-)k(\eta^+, \eta^- \backslash x)
 \\ &+ \sum \limits_{x \in \eta^+}e^{-E_{\kappa^+}(x,\eta^+ \backslash x)}e^{-E_{\tau^+}(x,\eta^-)}\mathcal{Q}_x(\kappa^+, \tau^+)k(\eta^+ \backslash x, \eta^-)
 \\ &+ \sum \limits_{x \in \eta^-}e^{-E_{\kappa^-}(x,\eta^- \backslash x)}e^{-E_{\tau^-}(x,\eta^+)}\mathcal{Q}_x(\tau^-,\kappa^-)k(\eta^+, \eta^- \backslash x).
\end{align*}
The next lemma shows that $L^{\Delta}$ can be identified with $\widehat{L}^*$.
\begin{Lemma}
 There exists $M(\alpha^+), N(\alpha^-) > 0$ such that for any $\beta^+ < \alpha^+$ and $\beta^- < \alpha^-$ 
 \[
  \Vert L^{\Delta}k \Vert_{\K_{\alpha}} \leq \left(\frac{M(\alpha^+)}{\alpha^+ - \beta^+} + \frac{N(\alpha^-)}{\alpha^- - \beta^-}\right)\Vert k \Vert_{\K_{\beta}}.
 \]
 Consequently, $L^{\Delta}$ is a bounded linear operator from $\K_{\beta}$ to $\K_{\alpha}$.
 Consider $L^{\Delta}$ on its maximal domain
 \[
  D(L^{\Delta}) = \{ k \in \K_{\alpha}\ | \ L^{\Delta}k \in \K_{\alpha} \},
 \]
 then $L^{\Delta} = \widehat{L}^*$. 
\end{Lemma}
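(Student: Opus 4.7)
The plan is to handle the two parts separately: first the quantitative norm estimate, then the identification as adjoint via a duality computation on the core $B_{bs}(\Gamma_0^2)$.

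For the inequality, I would split $L^\Delta$ into its seven constituents and estimate each in $\K_\alpha$-norm by applying the Ruelle bound $|k(\eta)|\leq \Vert k\Vert_{\K_\beta} e_\lambda(\rho;\eta^+)e_\lambda(\rho;\eta^-)e^{\beta^+|\eta^+|+\beta^-|\eta^-|}$ together with \eqref{POTTS:14} (with $\alpha$ replaced by $\beta$) for the summands involving $\mathcal{Q}_x$. The uniform $x$-bounds on $C_g(x,\beta^\pm)$ coming from the assumptions of this section absorb all integrals in $x$ into constants. After dividing by the $\K_\alpha$-weight, each summand leaves a residual factor of the form $|\eta^\pm|\, e^{-(\alpha^+-\beta^+)|\eta^+|-(\alpha^--\beta^-)|\eta^-|}$, which is controlled by
\[
 \sup_{n\in\N_0}\, n\,e^{-cn} \leq \frac{1}{ec}, \qquad c > 0.
\]
Collecting the contributions that carry the prefactor $|\eta^+|$ into a single constant $M(\alpha^+)$ and those carrying $|\eta^-|$ into $N(\alpha^-)$ produces the announced bound; boundedness as a map $\K_\beta \to \K_\alpha$ follows immediately.

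For the identification $L^\Delta = \widehat{L}^*$, I would first prove the duality identity
\[
 \langle \widehat{L}G, k\rangle = \langle G, L^\Delta k\rangle, \qquad G\in B_{bs}(\Gamma_0^2),\ k\in \K_\alpha,
\]
by direct computation. Each summand of $\widehat{L} = A+B$ contains an internal sum $\sum_{\xi\subset \eta}$; applying the integration by parts formula \eqref{IBP} moves that sum into a second Lebesgue--Poisson integral over $\xi$, whereupon the factors $f_x(g;\eta\setminus\xi)$ reassemble into the defining integrals of the operators $\mathcal{Q}_x(g_0,g_1)$. Since $G\in B_{bs}$ has bounded support and $k$ obeys the Ruelle bound, all interchanges of sums and integrals are justified by dominated convergence. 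Because $B_{bs}(\Gamma_0^2)$ is a core for $\widehat{L}$ by Theorem \ref{POTTSTH:00}, the identity extends to all of $D(\widehat{L})$. By the very definition of the adjoint, $k \in D(\widehat{L}^*)$ iff $G\mapsto \langle \widehat{L}G,k\rangle$ extends to a continuous linear functional on $\Lb_\alpha$; in view of the duality $(\Lb_\alpha)^* = \K_\alpha$ and the identity just established, this is equivalent to $L^\Delta k\in \K_\alpha$, which is the maximal domain. In that case necessarily $\widehat{L}^* k = L^\Delta k$.

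The main technical obstacle is the bookkeeping in the duality computation, particularly for the mutation blocks: the subtracted terms \eqref{POTTS:03} and \eqref{POTTS:04}, restricted to $\xi\neq \eta$, must cancel exactly against the $\xi = \eta$ contribution of the preceding full-sum terms to produce the density-dependent death contributions appearing in the first three lines of $L^\Delta$. Keeping track of which summation index $x$ plays which role (the moved particle versus a background particle) while applying \eqref{IBP} is the step where it is easiest to misplace a term, and it must be executed carefully for all four mutation/birth blocks.
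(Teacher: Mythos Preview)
Your proposal is correct and follows essentially the same route as the paper. The paper's proof of the norm estimate invokes exactly \eqref{POTTS:14}, the trivial bound $e^{-E_g(x,\eta^{\pm})}\leq 1$, and the elementary inequality $|\eta^{\pm}|e^{-(\alpha^{\pm}-\beta^{\pm})|\eta^{\pm}|}\leq \frac{1}{e(\alpha^{\pm}-\beta^{\pm})}$, and for the identification $L^{\Delta}=\widehat{L}^*$ simply says that the duality $\langle \widehat{L}G,k\rangle=\langle G,L^{\Delta}k\rangle$ ``follows by \eqref{IBP} and a simple computation'' for $G\in D(\widehat{L})$; your version spells out the same argument in more detail (working first on the core $B_{bs}(\Gamma_0^2)$ and extending), including the bookkeeping for the mutation blocks that the paper suppresses.
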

\begin{proof}
 The first assertion follows by \eqref{POTTS:14}, $e^{-E_g(x,\eta^{\pm})} \leq 1$ and,
 \[
  |\eta^{\pm}|e^{-(\alpha^{\pm} - \beta^{\pm})|\eta^{\pm}|} \leq \frac{1}{e(\alpha^{\pm} - \beta^{\pm})}, \ \ \eta \in \Gamma_0^2.
 \]
 For the second assertion, let $G \in D(\widehat{L})$ and $k \in D(\widehat{L}^*)$, then
 \[
  \int \limits_{\Gamma_0^2}G(\eta)(\widehat{L}^*k)(\eta)\dm \lambda(\eta) = \int \limits_{\Gamma_0^2}(\widehat{L}G)(\eta)k(\eta)\dm \lambda(\eta)
  = \int \limits_{\Gamma_0^2}G(\eta)(L^{\Delta}k)(\eta)\dm \lambda(\eta),
 \]
 where the last equality follows by \eqref{IBP} and a simple computation. Thus $L^{\Delta}k = \widehat{L}^*k$ and $D(\widehat{L}^*) \subset D(L^{\Delta})$. 
 Conversely, let $k \in D(L^{\Delta})$.
 Then, for any $G \in D(\widehat{L})$, above equality implies $k \in D(\widehat{L}^*)$ and $\widehat{L}^*k = L^{\Delta}k$.
\end{proof}
Since $\K_{\alpha}$ is not reflexive, $\widehat{T}(t)^*$ is, in general, not strongly continuous. However, it is continuous w.r.t. the weak topology
$\sigma(\K_{\alpha}, \Lb_{\alpha})$. Here, $\sigma(\K_{\alpha}, \Lb_{\alpha})$ is the smallest topology such that all functionals
$G \longmapsto \langle G, k \rangle$ are continuous for any $k \in \K_{\alpha}$. It is well-known, see \cite{ENG00}, that $\widehat{T}(t)^*$ leaves
the proper subspace $\K_{\alpha}^{\odot} := \overline{D(L^{\Delta})}$ invariant. 
Moreover, the restriction $\widehat{T}(t)^{\odot} := \widehat{T}(t)^*|_{\K_{\alpha}^{\odot}}$ is a strongly continuous semigroup with generator 
$\widehat{L}^{\odot}k = L^{\Delta}k$,
\[
 D(\widehat{L}^{\odot}) = \{ k \in D(L^{\Delta}) \ | \ L^{\Delta}k \in \K_{\alpha}^{\odot} \}.
\]
Thus, for any $k_0 \in D(\widehat{L}^{\odot})$, $k_t := \widehat{T}(t)^*k_0$ is the unique classical solution to 
\begin{align}\label{CORR}
 \frac{\partial k_t}{\partial t} = L^{\Delta}k_t, \ \ k_t|_{t=0} = k_0.
\end{align}
in $\K_{\alpha}$. Such system of equations is an analogue of the BBGKY-hierarchy known in the physical literature (see \cite{S88}).
Our aim is to get uniqueness for the the weak formulation
\begin{align}\label{WCORR}
 \frac{\dm}{\dm t}\int \limits_{\Gamma_0^2}G(\eta)k_t(\eta)\dm \lambda(\eta) = \int \limits_{\Gamma_0^2}\widehat{L}G(\eta)k_t(\eta)\dm \lambda(\eta), \ \ k_t|_{t=0} = k_0.
\end{align}
To this end, we use the topology of uniform convergence on compact subsets of $\Lb_{\alpha}$ on $\K_{\alpha}$. 
A basis of neighbourhoods around $0$ is given by sets of the form
\begin{align}\label{POTTS:10}
 \{ k \in \K_{\alpha} \ | \ \sup \limits_{G \in K}\ | \langle G, k \rangle | < \e \},
\end{align}
where $\e > 0$ and $K \subset \Lb_{\alpha}$ is a compact, cf. \cite{WZ06}. Denote by $\mathcal{C}$ the topology generated by 
the basis of neighbourhoods \eqref{POTTS:10}. Note that $\mathcal{C}$ coincides with $\sigma(\K_{\alpha}, \Lb_{\alpha})$
on norm-bounded sets, cf. \cite[Lemma 1.10]{WZ06}.
\begin{Definition}
 Given $k_0 \in \K_{\alpha}$, a weak solution \eqref{WCORR} is a family $(k_t)_{t \geq 0} \subset \K_{\alpha}$ being continuous w.r.t. $\mathcal{C}$ and
 \begin{align}\label{POTTS:00}
  \langle G, k_t \rangle = \langle G, k_0 \rangle + \int \limits_{0}^{t}\langle \widehat{L}G, k_s \rangle \dm s, \ \ G \in B_{bs}(\Gamma_0^2)
 \end{align}
 holds for all $t \geq 0$.
\end{Definition}
\begin{Theorem}\label{POTTSTH:01}
 For any $k \in \K_{\alpha}$ there exists a unique weak solution to \eqref{WCORR}, given by $k_t = \widehat{T}(t)^*k_0$.
 Moreover, the following holds:
 \begin{enumerate}
  \item For any $G \in D(\widehat{L})$, $t \longmapsto \langle G, k_t \rangle$ is continuously differentiable and satisfies \eqref{WCORR} for each $t \geq 0$.
  \item If $k_0 \in \K_{\beta}$ for some $\beta^+ < \alpha^+$ and $\beta^- < \alpha^-$, then $k_t$ is continuous w.r.t. to the norm in $\K_{\alpha}$.
 \end{enumerate}
\end{Theorem}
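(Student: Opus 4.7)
The plan is to take $k_t := \widehat{T}(t)^* k_0$ as the candidate solution and argue on two fronts: verify the defining properties of a weak solution, then rule out any other solution by an adjoint-semigroup argument. Existence together with part~(1) is the easier half. First I note that $\langle G, k_t\rangle = \langle \widehat{T}(t)G, k_0\rangle$ is continuous in $t$ for every $G \in \Lb_\alpha$ by strong continuity of $\widehat{T}(t)$, and since $\widehat{T}(t)^*$ is contractive the trajectory $(k_t)_{t \geq 0}$ remains norm-bounded, so $\sigma(\K_\alpha,\Lb_\alpha)$-continuity upgrades to $\mathcal{C}$-continuity via \cite[Lemma~1.10]{WZ06}. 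For $G \in D(\widehat{L})$ the identity $\widehat{L}\widehat{T}(t)G = \widehat{T}(t)\widehat{L}G$ makes $t \mapsto \langle G, k_t\rangle$ continuously differentiable with derivative $\langle \widehat{L}G, k_t\rangle$; integrating yields \eqref{POTTS:00} on $D(\widehat{L})$ and in particular on $B_{bs}(\Gamma_0^2) \subset D(\widehat{L})$.

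The bulk of the work is uniqueness. Given two weak solutions, I set $u_t$ to be their difference, so that $u_0 = 0$ and the integral identity holds on $B_{bs}(\Gamma_0^2)$. Using that $B_{bs}(\Gamma_0^2)$ is a core for $\widehat{L}$ (Theorem~\ref{POTTSTH:00}) together with norm-boundedness of $(u_s)_{s \in [0,t]}$ in $\K_\alpha$ (which follows from continuity of every scalar pairing $s \mapsto \langle G, u_s\rangle$ and the uniform boundedness principle), I extend the identity to all $G \in D(\widehat{L})$. I then fix $T > 0$ and $G \in D(\widehat{L})$ and study $\phi(t) := \langle \widehat{T}(T-t)G, u_t\rangle$. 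Splitting
\[
 \phi(t+h) - \phi(t) = \langle \widehat{T}(T-t-h)G - \widehat{T}(T-t)G, u_{t+h}\rangle + \langle \widehat{T}(T-t)G, u_{t+h} - u_t\rangle,
\]
I rewrite the first summand via $\widehat{T}(T-t-h)G - \widehat{T}(T-t)G = -\int_t^{t+h}\widehat{T}(T-s)\widehat{L}G\,ds$ (valid for $G \in D(\widehat{L})$) and the second via the extended integral identity applied to the test function $\widehat{T}(T-t)G \in D(\widehat{L})$. Dividing by $h$ and letting $h \to 0^{\pm}$, the norm-continuity of $s \mapsto \widehat{T}(T-s)\widehat{L}G$ in $\Lb_\alpha$ together with the $\mathcal{C}$-continuity of $u$ forces both one-sided derivatives of $\phi$ to vanish. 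Since $\phi$ is continuous in $t$, it is constant, so evaluating at the endpoints yields $\langle G, u_T\rangle = \phi(T) = \phi(0) = 0$ for every $G \in D(\widehat{L})$; density of $D(\widehat{L})$ in $\Lb_\alpha$ then gives $u_T = 0$.

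Part~(2) reduces to a one-line application of the preceding lemma: its bound places $k_0 \in \K_\beta$ into $D(L^{\Delta})$, hence into $\K_\alpha^{\odot}$, on which $\widehat{T}(t)^{\odot}$ is strongly continuous by the general theory of adjoint semigroups \cite{ENG00}; this gives the desired norm continuity of $k_t$ in $\K_\alpha$.

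The main obstacle I expect is the uniqueness calculation: the weak-solution framework provides only $\mathcal{C}$-continuity of $u$ with no a priori time regularity, so differentiating $\phi$ via a naive product rule is not justified. The workaround is to keep both factors in integral form and to harvest the cancellation between the mild increment of $\widehat{T}(T-\cdot)G$ and the integral identity for $u$ only in the limit $h \to 0$, where the $\mathcal{C}$-topology is strong enough to identify both contributions with $\pm\langle \widehat{T}(T-t)\widehat{L}G, u_t\rangle$.
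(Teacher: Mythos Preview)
Your argument is correct. Existence and part~(1) follow as you indicate from strong continuity of $\widehat{T}(t)$ and the intertwining $\widehat{L}\widehat{T}(t) = \widehat{T}(t)\widehat{L}$ on $D(\widehat{L})$; the upgrade from $\sigma(\K_\alpha,\Lb_\alpha)$-continuity to $\mathcal{C}$-continuity on the norm-bounded orbit is exactly how the paper proceeds, and your part~(2) via $\K_\beta \subset D(L^\Delta) \subset \K_\alpha^\odot$ is the paper's proof verbatim.

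The difference lies in uniqueness. The paper dispatches both existence and uniqueness in one line by invoking \cite[Theorem~2.1]{WZ06}, which is a general result on weak solutions for adjoint semigroups in the $\mathcal{C}$-topology. You instead reproduce the underlying Ball-type argument directly: extend the integral identity from the core $B_{bs}(\Gamma_0^2)$ to $D(\widehat{L})$ via norm-boundedness of the trajectory (uniform boundedness principle), and then show $\phi(t) = \langle \widehat{T}(T-t)G, u_t\rangle$ has vanishing derivative by pairing the mild increment of the semigroup against the integral identity for $u$. This is precisely the mechanism behind the cited theorem, so your route is not so much a different idea as an explicit unpacking of the reference. The gain of your version is self-containment and transparency about where $\mathcal{C}$-continuity (rather than mere weak-$*$ continuity) is actually used; the paper's citation keeps the proof short but hides the role of the topology $\mathcal{C}$.
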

\begin{proof}
 Since $\widehat{L}$ is the generator of a strongly continuous semigroup, the first assertion follows by \cite[Theorem 2.1]{WZ06}.
 \\ 1. The contraction property implies $\Vert k_t \Vert_{\K_{\alpha}} \leq \Vert k_0 \Vert_{\K_{\alpha}}$ and hence, by $\widehat{L}G \in \Lb_{\alpha}$,
 we see that $s \longmapsto \langle \widehat{L}G, k_s \rangle$ is continuous. By \eqref{POTTS:00}, we see that
 $t \longmapsto \langle G, k_t\rangle$ is continuously differentiable and satisfies \eqref{WCORR} for any $t \geq 0$.
 \\ 2. If $k_0 \in \K_{\beta}$, then $L^{\Delta}k_0 \in \K_{\alpha}$ and hence $k_0 \in D(L^{\Delta}) \subset \K_{\alpha}^{\odot}$.
\end{proof}

\subsection{Positive definiteness}
In this section we show existence and uniqueness of weak solutions to \eqref{FPE}. 
\begin{Lemma}
 There exists a measurable set $\Gamma_{\infty}^2 \subset \Gamma^2$ such that the following holds:
 \begin{enumerate}
  \item For each $\mu \in \mathcal{P}_{\alpha}$ we have $\mu(\Gamma_{\infty}^2) = 1$.
  \item We have $E_{g}(x,\gamma^{\pm}) < \infty$ for all $x \in \R^d$, $(\gamma^+, \gamma^-) \in \Gamma_{\infty}^2$ and, $g \in \{ \phi^{\pm}, \psi^{\pm}, \kappa^{\pm}, \tau^{\pm}\}$.
  \item For each $F \in \mathcal{FP}(\Gamma^2)$, the action $LF(\gamma)$ is well-defined for any $\gamma \in \Gamma_{\infty}^2$.
  Moreover, for any $\mu \in \mathcal{P}_{\alpha}$ we have $F, LF \in L^1(\Gamma^2, \dm \mu)$.
 \end{enumerate}
\end{Lemma}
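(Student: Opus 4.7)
The plan is to define $\Gamma_\infty^2$ via a single summability condition on the majorant $h$ from Assumption~(1), which will automatically control all eight pair potentials simultaneously. Set
\[
 \Gamma_{\infty}^2 := \left\{ \gamma \in \Gamma^2 \ \Big| \ \sum_{y \in \gamma^+} h(y) < \infty \text{ and } \sum_{y \in \gamma^-} h(y) < \infty \right\};
\]
this set is measurable because the finite partial sums $\sum_{y \in \gamma^\pm \cap \Lambda_n} h(y)$ along an exhausting sequence of compacts $\Lambda_n \uparrow \R^d$ are measurable in $\gamma$ and increase to $\sum_{y \in \gamma^\pm} h(y) \in [0,\infty]$. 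For~(1), note that $G_n(\eta) := h(y)\1_{\Lambda_n}(y)$ on $\{\eta^+=\{y\},\eta^-=\emptyset\}$, extended by zero elsewhere, lies in $B_{bs}(\Gamma_0^2)$ and satisfies $KG_n(\gamma) = \sum_{y \in \gamma^+ \cap \Lambda_n}h(y)$. The defining identity of $k_\mu$ together with the Ruelle bound $k_\mu(\{y\},\emptyset) \leq \|k_\mu\|_{\K_\alpha}e^{\alpha^+}\rho(y)$ and $h\rho \in L^1(\R^d)$ gives
\[
 \E_\mu\bigl[\textstyle\sum_{y \in \gamma^+ \cap \Lambda_n} h(y)\bigr] \leq \|k_\mu\|_{\K_\alpha} e^{\alpha^+}\int h(y)\rho(y)\, dy
\]
uniformly in $n$; monotone convergence then forces $\sum_{y \in \gamma^+} h(y) < \infty$ $\mu$-a.s., and the $\gamma^-$-argument is identical. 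Property~(2) is immediate from the pointwise bound $g(x-y) \leq C_x h(y)$ of Assumption~(1): for every $g \in \{\phi^{\pm},\psi^{\pm},\kappa^{\pm},\tau^{\pm}\}$ and $x \in \R^d$,
\[
 E_g(x,\gamma^\pm) \leq C_x \sum_{y \in \gamma^\pm} h(y) < \infty \qquad \text{on } \Gamma_\infty^2.
\]

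For the pointwise half of~(3), fix $F \in \mathcal{FP}(\Gamma^2)$, so $F(\gamma) = F(\gamma \cap \Lambda)$ is polynomially bounded on some compact $\Lambda$. All differences $F(\gamma\setminus x)-F(\gamma)$, $F(\gamma\cup x)-F(\gamma)$, and $F(\gamma^+\setminus x, \gamma^-\cup x)-F(\gamma)$ vanish when $x \notin \Lambda$, so every death and mutation sum in $LF$ reduces to a finite sum over $\gamma^\pm \cap \Lambda$, while the birth integrals reduce to integrals over the compact set $\Lambda$ of a bounded integrand. All the exponential weights $e^{-E_g(x,\gamma^\pm)} \in (0,1]$ are well-defined on $\Gamma_\infty^2$ by~(2). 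Hence $LF(\gamma)$ is a finite real number for every $\gamma \in \Gamma_\infty^2$.

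For integrability, write $F = KG$ with $G := K^{-1}F \in B_{bs}(\Gamma_0^2)$. The $K$-transform inequality $\|KG\|_{L^1(\mu)} \leq \|G\|_{L^1(k_\mu d\lambda)}$ recalled in the excerpt gives $F \in L^1(\Gamma^2,d\mu)$ at once, since $G$ has bounded support and $k_\mu$ obeys the Ruelle bound. For $LF$ the strategy is to establish the pointwise identity
\[
 LF(\gamma) = K(\widehat{L}G)(\gamma), \qquad \gamma \in \Gamma_\infty^2,
\]
and then invoke Theorem~\ref{POTTSTH:00}: since $G \in B_{bs}(\Gamma_0^2) \subset D(\widehat{L})$, we have $\widehat{L}G \in \Lb_\alpha$, and the same $K$-transform estimate together with the Ruelle bound yields $\int|LF|\,d\mu \leq \|k_\mu\|_{\K_\alpha}\|\widehat{L}G\|_{\Lb_\alpha} < \infty$.

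The main obstacle is precisely this last pointwise identity. The operator $\widehat{L} = K_0^{-1}LK_0$ is intrinsically defined at the level of \emph{finite} configurations, but here one must justify $L(KG) = K(\widehat{L}G)$ pointwise on an infinite configuration $\gamma \in \Gamma_\infty^2$. This amounts to interchanging the (possibly infinite) $K$-transform sum $\sum_{\eta \Subset \gamma}$ with the birth integrals and the death/mutation sums defining $L$, and then recognizing the resulting Bogoliubov-type rearrangement via the Minlos identity~\eqref{IBP}. Property~(2) is exactly what makes every intermediate sum and integral absolutely convergent on $\Gamma_\infty^2$, so Fubini applies and the rearrangement is legal termwise; without~(2), individual summands in the expansion would fail to be defined. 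This closes the outline.
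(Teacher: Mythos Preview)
Your proposal is correct and follows essentially the same approach as the paper: the set $\Gamma_\infty^2$ is defined via $h$-summability of both components, part~(1) uses the Ruelle bound on $k_\mu$ together with $h\rho\in L^1$, part~(2) is the pointwise majorization $g(x-y)\le C_x h(y)$, and part~(3) goes through the identity $LF=K\widehat{L}G$ combined with $\widehat{L}G\in\Lb_\alpha$ and the continuity of $K:L^1(k_\mu\,d\lambda)\to L^1(\mu)$. If anything, you are more explicit than the paper in isolating the absolute-convergence issue behind $LF=K\widehat{L}G$ on infinite configurations, which the paper asserts without further comment.
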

\begin{proof}
 Set $\Gamma_{\infty}^2 := \left \{ \gamma \in \Gamma \ \bigg| \ \sum \limits_{y \in \gamma^+}h(y) + \sum \limits_{y \in \gamma^-}h(y) < \infty \right \}$. 
 Let $\mu \in \mathcal{P}_{\alpha}$ and $k_{\mu}$ be its correlation function. Then,
 \begin{align*}
  \int \limits_{\Gamma^2} \left(  \sum \limits_{y \in \gamma^+}h(y) + \sum \limits_{y \in \gamma^-}h(y) \right)\dm \mu(\gamma)
   &= \int \limits_{\R^d}h(y)k_{\mu}^{(1,0)}(y)\dm y + \int \limits_{\R^d}h(y)k_{\mu}^{(0,1)}(y)\dm y
  \\ &\leq (e^{\alpha^+} + e^{\alpha^-})\Vert k_{\mu}\Vert_{\K_{\alpha}} \int \limits_{\R^d}h(y)\rho(y)\dm y < \infty.
 \end{align*}
 This shows that $\Gamma_{\infty}^2$ has full support for $\mu$. Let $E_{g}(x,\gamma) := \sum \limits_{y \in \gamma}g(x-y)$, 
 where $g \in \{ \phi^{\pm}, \psi^{\pm}, \kappa^{\pm}, \tau^{\pm}\}$. Then,
 \[
  E_{g}(x,\gamma^{\pm}) \leq C_x\left( \sum \limits_{y \in \gamma^+}h(y) + \sum \limits_{y \in \gamma^-}h(y)\right) < \infty, \ \ \gamma \in \Gamma_{\infty}^2, \ x \in \R^d
 \]
 For the last property let $G \in B_{bs}(\Gamma_0^2)$ and take $F = KG \in \mathcal{FP}(\Gamma^2)$.
 By the second property, $LF(\gamma)$ is well-defined for any $\gamma \in \Gamma_{\infty}^2$. 
 Moreover, we have $G, \widehat{L}G \in \Lb_{\alpha} \subset L^1(\Gamma_0^2, k_{\mu}\dm \lambda)$. 
 Since $K: L^1(\Gamma_0^2, k_{\mu}\dm \lambda) \longrightarrow L^1(\Gamma^2, \dm \mu)$ is continuous, by $LF = K\widehat{L}G$, it follows that
 $F, LF \in L^1(\Gamma^2, \dm \mu)$.
\end{proof}
For $\mu \in \mathcal{P}_{\alpha}$ and $F \in L^1(\Gamma^2, \dm \mu)$ let
\[
 \langle \langle F,\mu \rangle \rangle = \int \limits_{\Gamma^2}F(\gamma)\dm \mu(\gamma).
\]
Below we give the definition of a weak solution to \eqref{FPE}.
\begin{Definition}
 Let $\mu_0 \in \mathcal{P}_{\alpha}$, a weak solution to \eqref{FPE} is a family $(\mu_t)_{t \geq 0} \subset \mathcal{P}_{\alpha}$ such that
 for any $F \in \mathcal{FP}(\Gamma^2)$, $t \longmapsto \langle \langle LF, \mu_t \rangle \rangle$ is locally integrable and satisfies
 \[
  \langle \langle F,\mu_t \rangle \rangle = \langle \langle F, \mu_0 \rangle \rangle + \int \limits_{0}^{t}\langle \langle LF, \mu_s \rangle \rangle \dm s, \ \ t \geq 0.
 \]
\end{Definition}
The next theorem establishes uniqueness for weak solutions to \eqref{FPE}.
\begin{Theorem}
 The Fokker-Planck equation \eqref{FPE} has at most one weak solution $(\mu_t)_{t \geq 0} \subset \mathcal{P}_{\alpha}$ such that its correlation functions 
 $(k_{\mu_t})_{t \geq 0}$ satisfy
 \begin{align}\label{POTTS:01}
  \sup \limits_{t \in [0,T]}\ \Vert k_{\mu_t} \Vert_{\K_{\alpha}} < \infty, \ \ \forall T > 0.
 \end{align}
\end{Theorem}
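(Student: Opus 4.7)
\medskip
\noindent\textbf{Plan of proof.} Suppose $(\mu_t)_{t \geq 0}$ and $(\nu_t)_{t \geq 0}$ are two weak solutions to \eqref{FPE} with $\mu_0 = \nu_0$, both satisfying the bound \eqref{POTTS:01}. The strategy is to translate the weak form of \eqref{FPE} into the weak correlation equation \eqref{POTTS:00} for $(k_{\mu_t})$ and $(k_{\nu_t})$ and then invoke the uniqueness statement of Theorem \ref{POTTSTH:01}. Once $k_{\mu_t} = k_{\nu_t}$ is established, the identification of measures from correlation functions (the Theorem cited from \cite{F11TC}) forces $\mu_t = \nu_t$.

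\medskip
\noindent\textbf{Step 1: deriving the correlation equation.} For $G \in B_{bs}(\Gamma_0^2)$ set $F := KG \in \mathcal{FP}(\Gamma^2)$. By the preceding lemma $F, LF \in L^1(\Gamma^2, \dm \mu_t)$ for each $t$, and by the definition of the correlation function,
\[
 \langle \langle F, \mu_t \rangle \rangle = \langle G, k_{\mu_t} \rangle.
\]
Since $\widehat{L} G \in \Lb_\alpha$ and $k_{\mu_t} \in \K_\alpha$, the integrand $\widehat{L}G \cdot k_{\mu_t}$ is $\lambda$-integrable, so the extension of $K$ to $L^1$ together with the identity $LF = K \widehat{L} G$ gives
\[
 \langle \langle LF, \mu_t \rangle \rangle = \langle \widehat{L} G, k_{\mu_t} \rangle.
\]
Inserting this into the defining integral equation for $\mu_t$ yields exactly \eqref{POTTS:00} for $k_{\mu_t}$, and analogously for $k_{\nu_t}$.

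\medskip
\noindent\textbf{Step 2: continuity in $\mathcal{C}$.} For each $G \in B_{bs}(\Gamma_0^2)$, the map $t \mapsto \langle G, k_{\mu_t} \rangle = \langle \langle F, \mu_t \rangle \rangle$ is continuous by the integral representation, since the integrand $\langle \widehat{L}G, k_{\mu_s}\rangle$ is locally integrable in $s$ (in fact bounded in $s$, thanks to \eqref{POTTS:01}). By \eqref{POTTS:01} the family $\{ k_{\mu_t} : t \in [0,T]\}$ is norm-bounded in $\K_\alpha$. Since $B_{bs}(\Gamma_0^2)$ is dense in $\Lb_\alpha$ (by the approximation $G_n \to G$ from the proof of Theorem \ref{POTTSTH:00}), a standard density-plus-equicontinuity argument upgrades the pointwise continuity on $B_{bs}(\Gamma_0^2)$ to continuity in $\sigma(\K_\alpha, \Lb_\alpha)$. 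On norm-bounded sets $\mathcal{C}$ and $\sigma(\K_\alpha, \Lb_\alpha)$ coincide \cite[Lemma 1.10]{WZ06}, so $t \mapsto k_{\mu_t}$ is $\mathcal{C}$-continuous, and similarly for $t \mapsto k_{\nu_t}$.

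\medskip
\noindent\textbf{Step 3: conclusion.} Both $(k_{\mu_t})$ and $(k_{\nu_t})$ are therefore weak solutions to \eqref{WCORR} in the sense of the preceding Definition, with common initial value $k_{\mu_0} = k_{\nu_0}$. Theorem \ref{POTTSTH:01} asserts uniqueness of such a weak solution, so $k_{\mu_t} = k_{\nu_t} = \widehat{T}(t)^* k_{\mu_0}$ for every $t \geq 0$, and hence $\mu_t = \nu_t$. The main obstacle is the continuity check in Step 2: the a priori bound \eqref{POTTS:01} is essential both to restrict to a norm-bounded subset of $\K_\alpha$ (where $\mathcal{C}$ and the weak-$*$ topology agree) and to push the pointwise continuity from the test class $B_{bs}(\Gamma_0^2)$ to the full predual $\Lb_\alpha$; without it one cannot verify the hypothesis of Theorem \ref{POTTSTH:01}.
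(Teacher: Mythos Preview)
Your proof is correct and follows essentially the same route as the paper's: translate the weak Fokker--Planck equation into the integral identity \eqref{POTTS:00} for the correlation functions via $F=KG$ and $LF=K\widehat{L}G$, use the uniform bound \eqref{POTTS:01} together with the coincidence of $\mathcal{C}$ and $\sigma(\K_{\alpha},\Lb_{\alpha})$ on norm-bounded sets to obtain $\mathcal{C}$-continuity, and then invoke Theorem~\ref{POTTSTH:01}. Your Step~2 spells out more carefully than the paper does why continuity against test functions in $B_{bs}(\Gamma_0^2)$ propagates to all of $\Lb_{\alpha}$ (density plus the equicontinuity coming from \eqref{POTTS:01}), but this is exactly the content implicit in the paper's one-line ``In particular, $k_t$ is continuous w.r.t.\ $\sigma(\K_{\alpha},\Lb_{\alpha})$.''
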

\begin{proof}
 Let $(\mu_t)_{t \geq 0} \subset \mathcal{P}_{\alpha}$ be a weak solution to \eqref{FPE}, denote by $(k_{\mu_t})_{t \geq 0}$ the associated family of
 correlation functions. Let $G \in B_{bs}(\Gamma_0^2) \subset D(\widehat{L})$ and $F = KG \in \mathcal{FP}(\Gamma^2)$. 
 Then $G, \widehat{L}G \in \Lb_{\alpha} \subset L^1(\Gamma_0^2, k_{\mu_t}\dm \lambda)$, $t \geq 0$ and hence $F, LF$ belong to $L^1(\Gamma^2, \dm \mu_t)$.
 By $\langle \langle F, \mu_t\rangle \rangle = \langle G, k_{\mu_t}\rangle$, $\langle \langle LF, \mu_t\rangle \rangle = \langle \widehat{L}G, k_{\mu_t}\rangle$ and, 
 \eqref{FPE} it follows that  $t \longmapsto \langle \widehat{L}G, k_{\mu_t}\rangle$ is locally integrable and \eqref{POTTS:00} holds. 
 In particular, $k_t$ is continuous w.r.t. $\sigma(\K_{\alpha}, \Lb_{\alpha})$. By \eqref{POTTS:01} it is continuous w.r.t. $\mathcal{C}$, which shows the assertion.
\end{proof}
Below we state our main result for this section.
\begin{Proposition}\label{POTTSTH:05}
 For each $\mu_0 \in \mathcal{P}_{\alpha}$ there exists exactly one weak solution $(\mu_t)_{t \geq 0} \subset \mathcal{P}_{\alpha}$ to \eqref{FPE}
 such that its correlation functions satisfy \eqref{POTTS:01}. This solution is uniquely determined by the associated family of correlation functions
 $k_{\mu_t} = \widehat{T}(t)^*k_{\mu_0}$.
\end{Proposition}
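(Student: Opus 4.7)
The plan is as follows. Define the candidate $k_t := \widehat{T}(t)^* k_{\mu_0}$ and let the work be to verify that (a) $k_t$ is a correlation function of some $\mu_t \in \mathcal{P}_\alpha$, (b) $t \mapsto \mu_t$ solves \eqref{FPE} in the weak sense, and (c) the bound \eqref{POTTS:01} holds. Parts (b) and (c) are almost free: contractivity of $\widehat{T}(t)^*$ on $\K_\alpha$ gives $\sup_{t \in [0,T]} \Vert k_t \Vert_{\K_\alpha} \leq \Vert k_{\mu_0} \Vert_{\K_\alpha}$, while for $F = KG$ with $G \in B_{bs}(\Gamma_0^2)$ the identity $\langle \langle F, \mu_t \rangle\rangle = \langle G, k_t \rangle$ combined with \eqref{WCORR} from Theorem \ref{POTTSTH:01} yields the weak Fokker-Planck identity via $LF = K\widehat{L}G$. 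Uniqueness is already supplied by the previous theorem.

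The normalisation $k_t(\emptyset, \emptyset) = 1$ is immediate from $\widehat{T}(t)\1^* = \1^*$ (Theorem \ref{POTTSTH:00}): indeed $k_t(\emptyset, \emptyset) = \langle \1^*, k_t \rangle = \langle \widehat{T}(t)\1^*, k_{\mu_0}\rangle = 1$. Hence by the representation theorem from \cite{F11TC}, the only genuine obstacle is to show that $k_t$ is positive definite, i.e. $\langle G, k_t \rangle \geq 0$ for every $G \in B_{bs}^+(\Gamma_0^2)$.

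For positive definiteness I would argue by finite-volume approximation. For each compact $\Lambda \subset \R^d$, introduce the localised generator $\widehat{L}^\Lambda$ obtained by restricting every spatial integration to $\Lambda$ and every sum to particles in $\Lambda$. On the finite configuration space $\Gamma_\Lambda^2$ the corresponding pre-image operator $L^\Lambda$ is a bounded Markov generator of a pure-jump process, so the associated dual semigroup $(\widehat{T}^\Lambda(t))^*$ maps correlation functions of probability measures on $\Gamma_\Lambda^2$ to correlation functions of probability measures, and therefore preserves positive definiteness. Taking the volume-cutoff initial data with correlation function $k_{\mu_0}^\Lambda(\eta) := \1_{\eta \subset \Lambda \times \Lambda}\, k_{\mu_0}(\eta)$, one obtains $k_t^\Lambda := (\widehat{T}^\Lambda(t))^* k_{\mu_0}^\Lambda$ which is positive definite and uniformly bounded in $\K_\alpha$ by $\Vert k_{\mu_0} \Vert_{\K_\alpha}$.

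The main technical step — and what I expect to be the hardest part — is to pass to the limit $\Lambda \uparrow \R^d$ in a topology that transports positive definiteness to $k_t$. I would apply a Trotter-Kato argument on $\Lb_\alpha$: since $B_{bs}(\Gamma_0^2)$ is a core for $\widehat{L}$ by Theorem \ref{POTTSTH:00} and $\widehat{L}^\Lambda G \longrightarrow \widehat{L} G$ in $\Lb_\alpha$ for every $G \in B_{bs}(\Gamma_0^2)$ (by dominated convergence, using the bound on $B'$ from \eqref{POTTS:09}), one gets $\widehat{T}^\Lambda(t) G \longrightarrow \widehat{T}(t) G$ strongly in $\Lb_\alpha$. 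Dualising yields $\langle G, k_t^\Lambda \rangle \longrightarrow \langle G, k_t \rangle$ for every $G \in \Lb_\alpha$, in particular for every $G \in B_{bs}^+(\Gamma_0^2)$, so that
\[
 \langle G, k_t \rangle = \lim_{\Lambda \uparrow \R^d} \langle G, k_t^\Lambda \rangle \geq 0.
\]
This gives positive definiteness, the representation theorem from \cite{F11TC} produces the required $\mu_t \in \mathcal{P}_\alpha$, and the verification of (b), (c) above completes the argument.
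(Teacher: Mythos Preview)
Your overall strategy matches the paper's: regularize so that a Markov/density description becomes available, establish positive definiteness for the regularized evolution, then remove the regularization via Trotter--Kato. The paper uses a soft activity cutoff $z^\pm \mapsto z_\delta^\pm(x) := R_\delta(x) z^\pm$ with $R_\delta(x) = e^{-\delta|x|^2}/(1+\delta\rho(x))$ rather than your hard volume cutoff $\Lambda$; both serve to make the total birth intensity integrable against $\rho$, and your Trotter--Kato limit together with the approximate initial data $k_{\mu_0}^\Lambda$ would go through essentially as in the paper's Step~3.

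The gap is at the positive-definiteness step for the regularized model. First, $L^\Lambda$ is not a \emph{bounded} generator: deaths and mutations occur at total rate comparable to $|\eta|$, which is unbounded on $\Gamma_\Lambda^2$. More substantively, the sentence ``the dual semigroup maps correlation functions of probability measures to correlation functions'' hides exactly the identification that must be proved: one has to show that $(\widehat{T}^\Lambda(t))^* k_\nu$, produced by the analytic semigroup on $\K_\alpha$, coincides with the correlation function of the measure obtained by running the Markov dynamics with generator $L^\Lambda$ started at $\nu$. The paper devotes its Steps~1--2 to precisely this. In Step~1 it constructs the sub-stochastic density semigroup $S_\delta(t)$ on $L^1(\Gamma_0^2,\dm\lambda)$ (here the finiteness $\langle z_\delta^\pm\rangle < \infty$ is essential, and the unbounded death rate is handled via \cite{TV06}). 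In Step~2 it introduces the auxiliary weighted space $\mathcal{R}_\alpha^\delta$ so that the map $\mathcal{H}u(\eta) = \int_{\Gamma_0^2} (-1)^{|\xi|} u(\eta\cup\xi)\,\dm\lambda(\xi)$ sending correlation functions to densities lands in $L^1$, and then uses uniqueness of weak $L^1$-solutions to identify $\mathcal{H}U_\delta(t)^*k_0$ with $S_\delta(t)\mathcal{H}k_0 \geq 0$; positive definiteness follows from $\langle G, U_\delta(t)^*k_0\rangle = \langle K_0 G, S_\delta(t)\mathcal{H}k_0\rangle \geq 0$ for $G \in B_{bs}^+(\Gamma_0^2)$. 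Your hard cutoff could replace $R_\delta$ throughout this machinery (since $\int_\Lambda \rho < \infty$ plays the role of $\langle R_\delta\rangle_\rho < \infty$), but the identification itself cannot be skipped.
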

The following is a particular case of \cite{KKM08}.
\begin{Corollary}
 For any $\mu_0 \in \mathcal{P}_{\alpha}$ there exists a Markov function $(X_t^{\mu_0})_{t \geq 0}$ on the configuration space $\Gamma^2$ with the initial
 distribution $\mu_0$ associated with the generator $L$.
\end{Corollary}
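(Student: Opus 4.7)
Since uniqueness has already been supplied by the previous theorem, the task is existence. My plan is to set $k_t := \widehat{T}(t)^* k_{\mu_0}$, which by Theorem \ref{POTTSTH:01} is the unique weak solution of \eqref{WCORR} in $\K_{\alpha}$; contractivity of $\widehat{T}(t)^*$ immediately yields $\sup_{t \in [0,T]}\Vert k_t\Vert_{\K_{\alpha}} \leq \Vert k_{\mu_0}\Vert_{\K_{\alpha}}$, so \eqref{POTTS:01} is automatic. It then suffices to realise each $k_t$ as the correlation function of some $\mu_t \in \mathcal{P}_{\alpha}$: once this is done, the Fokker-Planck identity \eqref{FPE} for $F = KG$, $G \in B_{bs}(\Gamma_0^2)$, is read off \eqref{WCORR} via $\langle\langle KG,\mu_t\rangle\rangle = \langle G,k_t\rangle$ and $LKG = K\widehat{L}G$, with the required differentiability coming from Theorem \ref{POTTSTH:01}(1).

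By the criterion cited from \cite{F11TC}, existence of $\mu_t$ with $k_{\mu_t} = k_t$ reduces to (i) $k_t(\emptyset,\emptyset) = 1$ and (ii) $k_t$ positive definite. Condition (i) is immediate from $\widehat{T}(t)\1^* = \1^*$ (Theorem \ref{POTTSTH:00}):
\[
 k_t(\emptyset,\emptyset) = \langle \1^*, k_t\rangle = \langle \widehat{T}(t)\1^*, k_{\mu_0}\rangle = \langle \1^*, k_{\mu_0}\rangle = 1.
\]

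The main obstacle is (ii). My plan is a finite-volume cutoff: for an exhausting sequence of compacts $\Lambda_n \nearrow \R^d$, define $\widehat{L}_n$ on $\Lb_{\alpha}$ by restricting every spatial integral in $B$ to $\Lambda_n$. The estimates in the proof of Theorem \ref{POTTSTH:00} apply verbatim, so $\widehat{L}_n$ generates an analytic contraction semigroup $\widehat{T}_n(t)$ on $\Lb_{\alpha}$ with core $B_{bs}(\Gamma_0^2)$, and dominated convergence yields $\widehat{L}_n G \to \widehat{L}G$ in $\Lb_{\alpha}$ for each $G \in B_{bs}(\Gamma_0^2)$. The operator $L_n = K_0 \widehat{L}_n K_0^{-1}$ has birth and mutation rates bounded on bounded volumes and therefore governs a genuine Markov process on $\Gamma^2$ whose time-marginals $\mu_t^{(n)}$ started from $\mu_0$ are probability measures in $\mathcal{P}_{\alpha}$; by the uniqueness part of Theorem \ref{POTTSTH:01} (applied to $\widehat{L}_n$), their correlation functions coincide with $k_t^{(n)} := \widehat{T}_n(t)^* k_{\mu_0}$, which is therefore positive definite.

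Finally I pass to the limit. By Trotter-Kato, the uniformly contractive semigroups $\widehat{T}_n(t)$ with generators converging pointwise on the common core $B_{bs}(\Gamma_0^2)$ satisfy $\widehat{T}_n(t) G \to \widehat{T}(t) G$ in $\Lb_{\alpha}$ for each $G \in \Lb_{\alpha}$, uniformly on compact time intervals. Consequently, for every $G \in B_{bs}^+(\Gamma_0^2)$,
\[
 \langle G, k_t\rangle = \lim_{n \to \infty}\langle \widehat{T}_n(t)G, k_{\mu_0}\rangle \geq 0,
\]
which proves (ii). The subtle point I expect to be the most delicate is the matching of the finite-volume probabilistic evolution $\mu_t^{(n)}$ with the analytic correlation evolution $\widehat{T}_n(t)^* k_{\mu_0}$; this rests on conservativity ($\widehat{T}_n(t)\1^* = \1^*$) together with uniqueness of weak solutions of the truncated correlation equation. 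Applying \cite{F11TC} then produces the desired $\mu_t \in \mathcal{P}_{\alpha}$ with $k_{\mu_t} = k_t$, completing the construction.
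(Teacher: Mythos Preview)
Your proposal addresses the wrong statement. What you have written is an argument for Proposition~\ref{POTTSTH:05} --- existence of the state evolution $(\mu_t)_{t\geq 0}\subset\mathcal{P}_\alpha$ solving \eqref{FPE} --- not for the Corollary, which asserts the existence of a \emph{Markov function} $(X_t^{\mu_0})_{t\geq 0}$ on $\Gamma^2$, i.e.\ an actual stochastic process with generator $L$ and initial law $\mu_0$. Producing the one-dimensional marginals $\mu_t$ is strictly weaker than exhibiting a process. The paper does not prove the Corollary from scratch: once Proposition~\ref{POTTSTH:05} supplies $(\mu_t)_{t\geq 0}$, it simply invokes the general construction of \cite{KKM08} to pass from the solved Fokker--Planck equation to the process. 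Your proposal never touches this passage from states to process.

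Even read as an alternative route to Proposition~\ref{POTTSTH:05}, the argument has a gap precisely where you flag it. You assert that the truncated $L_n$ ``governs a genuine Markov process on $\Gamma^2$ whose time-marginals $\mu_t^{(n)}$ started from $\mu_0$ are probability measures in $\mathcal{P}_\alpha$'', with correlation functions $\widehat{T}_n(t)^*k_{\mu_0}$. But $\mu_0$ is supported on infinite configurations; restricting births to a compact $\Lambda_n$ does not by itself construct a process on $\Gamma^2$, nor establish $\mu_t^{(n)}\in\mathcal{P}_\alpha$, nor identify its correlation function with the analytic evolution --- you are essentially assuming, for each $n$, a statement of the same type as the Corollary you are meant to prove. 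The paper sidesteps this entirely. Its localisation $z^\pm\mapsto z_\delta^\pm=R_\delta z^\pm$ with $R_\delta\cdot\rho\in L^1(\R^d)$ pushes the problem down to \emph{finite} configurations: Step~1 builds a sub-stochastic semigroup $S_\delta(t)$ on $L^1(\Gamma_0^2,\dm\lambda)$ via the Thieme--Voigt theorem, and Step~2 obtains positive definiteness from $\langle G,u_t^\delta\rangle=\langle K_0G,\mathcal{H}u_t^\delta\rangle$ with $\mathcal{H}u_t^\delta=S_\delta(t)R_0\geq 0$. No Markov process on $\Gamma^2$ is ever invoked. Your sharp cutoff does not achieve this reduction, because it limits where particles are born but not where the (infinite) initial configuration lives.
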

Since uniqueness was already shown, it remains to prove existence of a weak solution to \eqref{FPE}. To this end, it suffices to show that
$k_t := \widehat{T}(t)^*k_{\mu_0} \in \K_{\alpha}$ is positive definite for each $t \geq 0$. 

\subsubsection*{Step 1: Evolution of local densities}
Let $R_{\delta}(x) := \frac{e^{-\delta|x|^2}}{1 + \delta \rho(x)}$ and $z_{\delta}^{\pm}(x) := R_{\delta}(x)z^{\pm}$, $\delta > 0$. Then,
\begin{enumerate}
 \item $R_{\delta}(x) \longrightarrow 1$ as $\delta \to 0$ for any $x \in \R^d$.
 \item $R_{\delta}(x) \leq \min\{ 1, e^{-\delta|x|^2}\}$ for any $x \in \R^d$, $\delta > 0$.
 \item $\rho \cdot R_{\delta}$ is integrable for any $\delta > 0$.
\end{enumerate}
Denote by $L_{\delta}$ the associated Markov operator. We consider this operator on all measurable functions $F: \Gamma_0^2 \longrightarrow \R$.
Note that $L_{\delta}F$ is, in general, not bounded on $\Gamma_0^2$ even if $F$ is bounded. Let
\begin{align*}
 D_{\delta}(\eta) &= |\eta| 
 \\ &\ \ \ + \int \limits_{\R^d}z^+_{\delta}(x)e^{-E_{\phi^+}(x,\eta^+)}e^{-E_{\psi^+}(x,\eta^-)}\dm x 
           + \int \limits_{\R^d} z^{-}_{\delta}(x)e^{-E_{\phi^-}(x,\eta^-)}e^{-E_{\psi^-}(x,\eta^+)}\dm x
 \\ &\ \ \ + \sum \limits_{x \in \eta^+}e^{-E_{\kappa^+}(x,\eta^+ \backslash x)}e^{-E_{\tau^+}(x,\eta^-)}
           + \sum \limits_{x \in \eta^-}e^{-E_{\kappa^-}(x,\eta^- \backslash x)} e^{-E_{\tau^-}(x,\eta^+)}
\end{align*}
and $\mathcal{D}_{\delta} = \{ R \in L^1(\Gamma_0^2, \dm \lambda) \ | \ D_{\delta} \cdot R \in L^1(\Gamma_0^2,\dm \lambda) \}$.
Then $(-D_{\delta}, \mathcal{D}_{\delta})$ is the generator of a positive analytic semigroup of contractions on $L^1(\Gamma_0^2,\dm \lambda)$.
Let $\mathcal{Q}_{\delta}$ be another (positive) operator on $\mathcal{D}_{\delta}$ given by
\begin{align*}
 (\mathcal{Q}_{\delta}R)(\eta) &= \int \limits_{\R^d}R(\eta^+ \cup x, \eta^-)\dm x 
           + \sum \limits_{x \in \eta^+}z^+_{\delta}(x)e^{-E_{\phi^+}(x,\eta^+ \backslash x)}e^{-E_{\psi^+}(x,\eta^-)}R(\eta^+ \backslash x, \eta^-)
 \\ &\ \ \ + \int \limits_{\R^d}R(\eta^+, \eta^- \cup x)\dm x
           + \sum \limits_{x \in \eta^-}z^-_{\delta}(x)e^{-E_{\phi^-}(x,\eta^- \backslash x)}e^{-E_{\psi^-}(x,\eta^+)}R(\eta^+, \eta^- \backslash x)
 \\ &\ \ \ + \sum \limits_{x \in \eta^-}e^{-E_{\kappa^+}(x,\eta^+)}e^{-E_{\tau^+}(x,\eta^- \backslash x)}R(\eta^+ \cup x, \eta^- \backslash x)
 \\ &\ \ \ + \sum \limits_{x \in \eta^+}e^{-E_{\kappa^-}(x,\eta^-)}e^{-E_{\tau^-}(x,\eta^+ \backslash x)}R(\eta^+ \backslash x,\eta^- \cup x).
\end{align*}
Then
\[
 \int \limits_{\Gamma_0^2}(\mathcal{Q}_{\delta}R)(\eta)\dm \lambda(\eta) = \int \limits_{\Gamma_0^2}D_{\delta}(\eta)R(\eta)\dm \lambda(\eta), \ \ 0 \leq R \in \mathcal{D}_{\delta}.
\]
Consequently, there exists an extension $(\mathcal{J}_{\delta}, D(\mathcal{J}_{\delta}))$ of $(-D_{\delta} + \mathcal{Q}_{\delta},\mathcal{D}_{\delta})$ such that
$\mathcal{J}_{\delta}$ is the generator of a sub-stochastic semigroup $(S_{\delta}(t))_{t \geq 0}$ on $L^1(\Gamma_0^2,\dm \lambda)$, cf. \cite[Theorem 2.2]{TV06}.
\begin{Lemma}
 $\mathcal{D}_{\delta}$ is a core for $\mathcal{J}_{\delta}$. Moreover, $S_{\delta}(t)$ leaves $L^1(\Gamma_0^2, (1+|\cdot|)\dm \lambda)$ invariant.
\end{Lemma}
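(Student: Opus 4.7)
The plan is to prove both assertions together via a Lyapunov moment estimate with the weight $V(\eta) := 1+|\eta|$. The crucial computation is to bound, for $0 \leq R \in \mathcal{D}_\delta$, the integral $\int_{\Gamma_0^2} V(\eta)(\mathcal{J}_\delta R)(\eta)\,\dm\lambda(\eta)$ in terms of $\int V R\,\dm\lambda$. The contribution from the constant $1$ vanishes by the mass identity $\int (\mathcal{Q}_\delta - D_\delta) R \,\dm\lambda = 0$ stated just above the Lemma. For the $|\eta|$-contribution I would match, term by term, the six terms of $\mathcal{Q}_\delta$ against the four pieces of $|\eta|\,D_\delta$ via the Lebesgue--Poisson identity \eqref{IBP}. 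Three cases arise: (a) the two mutation terms preserve $|\eta|$, so after the substitution $\xi^{\pm} = (\eta^{\mp}\backslash x)\cup\{?\}$ they exactly cancel the mutation pieces of $|\eta|\,D_\delta$; (b) the two death integrals in $\mathcal{Q}_\delta$, after the substitution $\xi^{\pm} = \eta^{\pm} \cup x$, become $\int (|\xi|-1)|\xi^{\pm}|\,R(\xi)\,\dm\lambda(\xi)$, and together with the $|\eta|\cdot|\eta|$-part of $D_\delta$ leave the dissipative contribution $-\int |\eta|\,R\,\dm\lambda$; (c) the two birth terms change $|\eta|$ by $+1$ and produce a remainder of the form $\int\!\int (z_\delta^+(x)+z_\delta^-(x))\cdots \dm x\,R\,\dm\lambda$, which, using the integrability of $R_\delta$ on $\R^d$, is bounded by $c_\delta \int R\,\dm\lambda$. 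Collecting these I expect $\int V \mathcal{J}_\delta R\,\dm\lambda \leq c_\delta \int V R\,\dm\lambda$ for all $0 \leq R \in \mathcal{D}_\delta$.

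From this bound, a standard truncation and Fatou argument shows that $S_\delta(t)$ preserves $L^1(\Gamma_0^2, V\,\dm\lambda)$ with $\|S_\delta(t) R\|_{L^1(V\,\dm\lambda)} \leq e^{c_\delta t}\|R\|_{L^1(V\,\dm\lambda)}$, which gives the second assertion. For the core property I would then invoke the honesty criterion of \cite{TV06}: the domain $\mathcal{D}_\delta$ is a core for $\mathcal{J}_\delta$ if and only if the minimal sub-stochastic semigroup is honest, i.e.\ $\int S_\delta(t) R\,\dm\lambda = \int R\,\dm\lambda$ for all $R \geq 0$. Honesty follows directly from the moment bound: for $R\geq 0$ in $L^1(V\,\dm\lambda)\cap L^1(\dm\lambda)$ the invariance of $L^1(V\,\dm\lambda)$ legitimizes the identity $\frac{\dm}{\dm t}\int S_\delta(t) R\,\dm\lambda = \int \mathcal{J}_\delta S_\delta(t) R\,\dm\lambda = 0$, so mass is conserved; the approximation $R \mapsto R\wedge n$ then extends this to arbitrary $0 \leq R \in L^1(\dm\lambda)$.

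The main obstacle lies in the bookkeeping for (a)--(c): matching the six $\mathcal{Q}_\delta$-terms with the four $|\eta|\,D_\delta$-pieces via the correct change of variables, and justifying every exchange of summation and integration from $R \in \mathcal{D}_\delta$. Once this is done, the finiteness of $c_\delta$ is guaranteed by the $\delta$-cutoff $R_\delta$, which is precisely where the subscript $\delta$ becomes essential: the estimate (and hence both conclusions) is valid for every $\delta > 0$ but its constant blows up as $\delta \to 0$, which explains the need for the approximation $L_\delta$ of $L$ in the subsequent positivity argument.
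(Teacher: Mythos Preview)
Your computation is correct, and the overall plan rests on the same Lyapunov function $|\eta|$ as the paper. The paper, however, works on the dual (observable) side and proves the \emph{pointwise} inequality
\[
 (L_\delta V)(\eta) \leq c(1+V(\eta)) - \tfrac{1}{2}D_\delta(\eta), \qquad V(\eta)=|\eta|,
\]
in two lines from $(L_\delta V)(\eta) \leq -|\eta| + \langle z_\delta^+\rangle + \langle z_\delta^-\rangle$ and $D_\delta(\eta) \leq 2|\eta| + \langle z_\delta^+\rangle + \langle z_\delta^-\rangle$, and then cites \cite[Proposition~5.1]{TV06} directly; that proposition takes exactly this form of inequality as input (the $-\tfrac{1}{2}D_\delta$ term is the non-explosion condition) and delivers both conclusions at once. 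Your integrated bound $\int V\mathcal{J}_\delta R\,\dm\lambda \leq c\int VR\,\dm\lambda$ is the dual reformulation of the weaker estimate $L_\delta V \leq c(1+V)$, and your term-by-term matching (a)--(c) is precisely the Lebesgue--Poisson dual of evaluating $L_\delta|\eta|$.

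What you lose by integrating is the explicit $-\tfrac{1}{2}D_\delta$ term, which is hidden in your dissipative remainder $-\int|\eta|R\,\dm\lambda$. As a result your chain ``moment bound $\Rightarrow$ invariance of $L^1(V\dm\lambda)$ $\Rightarrow$ honesty $\Rightarrow$ core'' has a mild circularity: to run Gronwall on $t\mapsto\int V S_\delta(t)R\,\dm\lambda$ you already need $S_\delta(t)R\in\mathcal{D}_\delta$, which is essentially the core statement you are after. This can be resolved (e.g.\ by truncating $V$ and bounding $L_\delta V_n$ pointwise, or via the Dyson--Phillips series), and your key observation $D_\delta \leq C(1+|\cdot|)$, hence $L^1((1+|\cdot|)\dm\lambda)\subset\mathcal{D}_\delta$, is exactly what makes the honesty step close once invariance is established. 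But the paper's pointwise estimate plus a single citation is considerably shorter and sidesteps this bookkeeping entirely.
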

\begin{proof}
 Let $V(\eta) = |\eta|$, we want to find a constant $c = c(\delta) > 0$ such that
 \begin{align}\label{POTTS:02}
  L_{\delta}V(\eta) \leq c(\delta)(1 + V(\eta)) - \frac{1}{2}D_{\delta}(\eta), \ \ \eta \in \Gamma_0^2.
 \end{align}
 In such a case the assertion follows from \cite[Proposition 5.1]{TV06}. Observe that
 \[
  (L_{\delta}V)(\eta) \leq - |\eta| + \langle z_{\delta}^+ \rangle + \langle z_{\delta}^- \rangle.
 \]
 Then \eqref{POTTS:02} holds, provided
 \[
  \langle z_{\delta}^+ \rangle + \langle z_{\delta}^- \rangle + \frac{1}{2}D_{\delta}(\eta) \leq (1 + c)|\eta| + c.
 \]
 By $D_{\delta}(\eta) \leq 2 |\eta| + \langle z_{\delta}^+ \rangle + \langle z_{\delta}^- \rangle$ this holds true, provided
 \[
  \frac{3}{2}\left( \langle z_{\delta}^+ \rangle + \langle z_{\delta}^- \rangle \right) + |\eta| \leq (1+c)|\eta| + c.
 \]
 Above inequality is satisfied if $c > 0$ is such that $c > \frac{3}{2}\left( \langle z_{\delta}^+ \rangle + \langle z_{\delta}^- \rangle \right)$.
\end{proof}
Let $(\mathcal{I}_{\delta}, D(\mathcal{I}_{\delta}))$ be the adjoint operator to $(\mathcal{J}_{\delta}, D(\mathcal{J}_{\delta}))$.
The next lemma follows immediately by \eqref{IBP}.
\begin{Lemma}
 For each $F \in D(\mathcal{I}_{\delta})$ the action of $\mathcal{I}_{\delta}$ is given by $L_{\delta}F$, i.e. $\mathcal{I}_{\delta}F = L_{\delta}F$.
\end{Lemma}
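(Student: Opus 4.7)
The plan is to verify the claim by directly computing the adjoint pairing on a dense subspace and matching terms. Since the previous lemma establishes that $\mathcal{D}_\delta$ is a core for $\mathcal{J}_\delta$, it is enough to prove
\[
 \int_{\Gamma_0^2} F(\eta)\,(\mathcal{J}_\delta R)(\eta)\,\dm \lambda(\eta) = \int_{\Gamma_0^2} R(\eta)\,(L_\delta F)(\eta)\,\dm \lambda(\eta)
\]
for every $R \in \mathcal{D}_\delta$. Once this identity holds, the definition of the Banach-space adjoint together with the density of $\mathcal{D}_\delta$ forces $\mathcal{I}_\delta F = L_\delta F$ $\lambda$-a.e.\ whenever $F \in D(\mathcal{I}_\delta)$.

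First I would substitute $\mathcal{J}_\delta R = -D_\delta R + \mathcal{Q}_\delta R$ and split the left-hand side into the multiplicative $-D_\delta$ term and the six summands of $\mathcal{Q}_\delta R$. The $-D_\delta$ contribution, in which $R$ is already evaluated at the unshifted $\eta$, matches the six ``loss'' parts $-F(\eta)$ that appear in each transition of $L_\delta F$. The core of the argument is then a term-by-term application of the Mecke-type identity which is an immediate consequence of \eqref{IBP}, namely
\[
 \int_{\Gamma_0}\int_{\R^d} H(x,\eta)\,\dm x\,\dm \lambda(\eta) = \int_{\Gamma_0}\sum_{x\in\eta} H(x,\eta\backslash x)\,\dm \lambda(\eta),
\]
applied componentwise in the $\pm$ marks. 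For instance, the ``death'' summand $\int_{\R^d} R(\eta^+\cup x,\eta^-)\,\dm x$ of $\mathcal{Q}_\delta R$, paired against $F(\eta)$, transforms into $\sum_{x\in\eta^+}F(\eta^+\backslash x,\eta^-)\,R(\eta)$; the reverse identity is used for the birth-type summands carrying the activity factors $z_\delta^\pm(x)$; and the two mutation summands in $\mathcal{Q}_\delta R$, which shift a point between $\eta^+$ and $\eta^-$, produce the corresponding mutation gains in $L_\delta F$. Symmetry of the pair potentials $\phi^{\pm}, \psi^{\pm}, \kappa^{\pm}, \tau^{\pm}$ ensures that the relative-energy factors $e^{-E_g(x,\cdot)}$ are carried along correctly under these shifts.

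The main obstacle is purely combinatorial bookkeeping: one must carefully match each of the six summands in $\mathcal{Q}_\delta R$ to the correct ``gain'' term in $L_\delta F$, each piece of $D_\delta$ to the corresponding ``loss'' term, and verify that the $z_\delta^\pm(x)$ prefactors land on the correct side. No delicate analysis is required, since membership $R \in \mathcal{D}_\delta$ already guarantees absolute integrability of every summand, so Fubini's theorem can be applied freely at each step. Collecting the transformed pieces reproduces exactly $\int_{\Gamma_0^2} R\,L_\delta F\,\dm\lambda$, completing the verification.
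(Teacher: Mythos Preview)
Your proposal is correct and follows exactly the route the paper intends: the paper's proof is the single line ``follows immediately by \eqref{IBP}'', and your argument is precisely the term-by-term Mecke/IBP computation that this sentence abbreviates. The only addition you make beyond the paper is the explicit appeal to the core property of $\mathcal{D}_\delta$ to pass from the duality identity on $\mathcal{D}_\delta$ to the conclusion $\mathcal{I}_\delta F = L_\delta F$ a.e., which is standard.
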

This shows that for each $R_0 \in L^1(\Gamma_0^2, \dm \lambda)$ there exists exactly one weak solution $(R_t^{\delta})_{t \geq 0} \subset L^1(\Gamma_0^2, \dm \lambda)$ to
\begin{align}\label{POTTS:05}
 \frac{\dm }{\dm t}\int \limits_{\Gamma_0^2}F(\eta)R_{t}^{\delta}(\eta)\dm \lambda(\eta) = \int \limits_{\Gamma_0^2}L_{\delta}F(\eta)R_{t}^{\delta}(\eta)\dm \lambda(\eta), \ \ R_{t}^{\delta}|_{t=0}= R_0,
\end{align}
where $F \in D(\mathcal{I}_{\delta})$, cf. \cite{BALL77}. This solution is given by $R_t^{\delta} = S_{\delta}(t)R_0$, $t \geq 0$.

\subsubsection*{Step 2: Evolution of localized correlation functions}
Denote by $\widehat{L}_{\delta} := K_0^{-1}L_{\delta}K_0$ the operator on quasi-observables equipped with the domain $D(\widehat{L})$.
The next lemma follows by $R_{\delta} \leq 1$, a repetition of the previous arguments and, Trotter-Kato approximation.
\begin{Lemma}\label{POTTSTH:04}
 For any $\delta > 0$, the assertions of Theorem \ref{POTTSTH:00} and Theorem \ref{POTTSTH:01} hold with $z^{\pm}_{\delta}$ instead of $z^{\pm}$.
 Let $\widehat{T}_{\delta}(t)$ and $\widehat{T}_{\delta}(t)^*$ be the semigroups on $\Lb_{\alpha}$ and $\K_{\alpha}$, respectively. 
 Then, for any $G \in \Lb_{\alpha}$,
 \[
  \widehat{T}_{\delta}(t)G \longrightarrow \widehat{T}(t)G, \ \ \delta \to 0
 \]
 holds in $\Lb_{\alpha}$.
\end{Lemma}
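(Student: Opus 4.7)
The plan is to proceed in two stages. First, I would verify that the assertions of Theorems \ref{POTTSTH:00} and \ref{POTTSTH:01} remain valid with $z^{\pm}_{\delta}$ in place of $z^{\pm}$; this is essentially a repetition argument. Since $z^{\pm}_{\delta}(x) = R_{\delta}(x) z^{\pm}$ with $0 \le R_{\delta}(x) \le 1$, the modified generator $\widehat{L}_{\delta}$ is obtained from $\widehat{L}$ by damping the two birth terms by a factor bounded by $1$, while the cumulative death intensity $M(\eta)$ is unchanged because it contains no birth contributions. Running the estimate from the proof of Theorem \ref{POTTSTH:00} on the positive dominating operator $B'_{\delta}$ produces the same bound $\int B'_{\delta} G \, e_{\lambda}(\rho) e^{\alpha|\cdot|}\, \dm \lambda \le a(\alpha) \int M(\eta) G(\eta)\, \dm \lambda(\eta)$ with the same constant $a(\alpha) \in (0,1)$, because $R_{\delta} \le 1$ can only improve the birth contribution. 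The cited results from \cite{TV06} and \cite{AR91} then yield the analytic contraction semigroup $\widehat{T}_{\delta}(t)$ on $\Lb_{\alpha}$ with $B_{bs}(\Gamma_0^2)$ as a core, and the statements of Theorem \ref{POTTSTH:01} for the adjoint semigroup $\widehat{T}_{\delta}(t)^*$ on $\K_{\alpha}$ are obtained by the same arguments.

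For the second stage I would invoke the Trotter--Kato approximation theorem for contraction $C_0$-semigroups on $\Lb_{\alpha}$. Since both $(\widehat{L}, D(\widehat{L}))$ and $(\widehat{L}_{\delta}, D(\widehat{L}))$ are closures of their restrictions to the common core $B_{bs}(\Gamma_0^2)$, it suffices to prove $\widehat{L}_{\delta} G \longrightarrow \widehat{L} G$ in $\Lb_{\alpha}$ for every $G \in B_{bs}(\Gamma_0^2)$ as $\delta \to 0$. The difference $(\widehat{L}_{\delta} - \widehat{L})G$ involves only the two birth terms of $B$ and has the form of $x$-integrals with multiplicative factor $z^{\pm}(R_{\delta}(x) - 1)$. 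Since $G$ has bounded support, the effective $x$-domain is contained in a compact set; combining this with $R_{\delta}(x) \to 1$ pointwise and $|R_{\delta}(x) - 1| \le 1$, dominated convergence yields that the inner $x$-integrand tends pointwise in $\eta$ to zero.

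The moduli of these inner integrands are uniformly in $\delta$ bounded by the corresponding birth terms of $\widehat{L}|G|$, which lie in $\Lb_{\alpha}$ because $G \in B_{bs}(\Gamma_0^2) \subset D(\widehat{L})$. A second application of dominated convergence in the $\eta$-integration then delivers $\widehat{L}_{\delta} G \to \widehat{L} G$ in $\Lb_{\alpha}$, and Trotter--Kato concludes the proof. The main technical point to watch is the production of a $\delta$-uniform $\Lb_{\alpha}$-dominating function for the $\eta$-integration step, but this is handled automatically by the pointwise bound $z^{\pm}_{\delta} \le z^{\pm}$, which reduces the dominating function to a term already known to lie in $\Lb_{\alpha}$ from the analysis of $\widehat{L}$.
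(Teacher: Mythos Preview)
Your proposal is correct and follows exactly the approach indicated in the paper: the bound $R_{\delta}\le 1$ makes the estimates of Theorem~\ref{POTTSTH:00} go through unchanged (with the same constant $a(\alpha)$), and the convergence $\widehat{T}_{\delta}(t)G\to\widehat{T}(t)G$ is obtained via Trotter--Kato from $\widehat{L}_{\delta}G\to\widehat{L}G$ on the common core $B_{bs}(\Gamma_0^2)$, using dominated convergence in $x$ and then in $\eta$ with the birth part of $B'|G|$ as dominating function.
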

Let $\mathcal{B}_{\alpha}^{\delta}$ be the Banach space of all equivalence classes of functions $G$ with norm
\[
 \Vert G \Vert_{\mathcal{B}_{\alpha}^{\delta}} = \int \limits_{\Gamma_0^2}|G(\eta)|e_{\lambda}(R_{\delta};\eta^+)e_{\lambda}(R_{\delta};\eta^-)e^{\alpha|\eta|}e_{\lambda}(\rho;\eta)\dm \lambda(\eta).
\]
Its dual Banach space is identified with the Banach space $\mathcal{R}_{\alpha}^{\delta}$ of all equivalence classes of functions $u$ equipped with the norm
\[
 \Vert u \Vert_{\mathcal{R}_{\alpha}^{\delta}} = \esssup \limits_{\eta \in \Gamma_0^2}\frac{|u(\eta)|}{e_{\lambda}(R_{\delta};\eta^+)e_{\lambda}(R_{\delta};\eta^-)e^{\alpha|\eta|}e_{\lambda}(\rho;\eta)}.
\]
The same arguments as in the proof of Theorem \ref{POTTSTH:00} and Theorem \ref{POTTSTH:01} show that we can replace $\Lb_{\alpha}$, $\K_{\alpha}$ 
by $\mathcal{B}_{\alpha}^{\delta}$ and $\mathcal{R}_{\alpha}^{\delta}$. Let $U_{\delta}(t)$ and $U_{\delta}(t)^*$ be the corresponding semigroups
and let $(\widehat{L}_{\delta}, D^{\mathcal{B}}(\widehat{L}_{\delta}))$ be the generator of $U_{\delta}(t)$. In analogy to $D(\widehat{L})$, we see that
\[
 D^{\mathcal{B}}(\widehat{L}_{\delta}) = \{ G \in \mathcal{B}_{\alpha}^{\delta} \ | \ M \cdot G \in \mathcal{B}_{\alpha}^{\delta} \}
\]
and, in particular, $B_{bs}(\Gamma_0^2) \subset D^{\mathcal{B}}(\widehat{L}_{\delta})$ is a core. Thus, the Cauchy problem
\[
 \frac{\dm}{\dm t}\langle G, u_t^{\delta}\rangle = \langle \widehat{L}_{\delta}G, u_t^{\delta}\rangle, \ \ u_t^{\delta}|_{t=0} = u_0, \ \ G \in B_{bs}(\Gamma_0^2)
\]
has for every $u_0 \in \mathcal{R}_{\alpha}^{\delta}$ a unique weak solution in $\mathcal{R}_{\alpha}^{\delta}$ given by $u_t^{\delta} = U_{\delta}(t)^*u_0$.
\begin{Lemma}\label{POTTSTH:02}
 Let $k_0 \in \mathcal{R}_{\alpha}^{\delta}$, then $\widehat{T}_{\delta}(t)^*k_0 = U_{\delta}(t)^*k_0$ holds.
\end{Lemma}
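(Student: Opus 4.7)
The plan is to show that the two semigroups $\widehat{T}_{\delta}(t)$ and $U_{\delta}(t)$ coincide on the dense common core $B_{bs}(\Gamma_0^2)$ and then pass to adjoints.

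First I would exploit the geometry of the two spaces. Since $R_{\delta}(x) \leq 1$, we have $e_{\lambda}(R_{\delta};\eta) \leq 1$ for every $\eta \in \Gamma_0^2$, which gives the continuous embedding $\Lb_{\alpha} \hookrightarrow \mathcal{B}_{\alpha}^{\delta}$ with $\Vert G \Vert_{\mathcal{B}_{\alpha}^{\delta}} \leq \Vert G \Vert_{\Lb_{\alpha}}$. Dually, $\mathcal{R}_{\alpha}^{\delta} \hookrightarrow \K_{\alpha}$, so the right hand side $U_{\delta}(t)^*k_0 \in \mathcal{R}_{\alpha}^{\delta}$ and the left hand side $\widehat{T}_{\delta}(t)^*k_0 \in \K_{\alpha}$ both make sense for $k_0 \in \mathcal{R}_{\alpha}^{\delta}$.

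Next, I would fix $G \in B_{bs}(\Gamma_0^2)$ and consider the orbit $t \mapsto \widehat{T}_{\delta}(t)G$ in $\Lb_{\alpha}$. Since $B_{bs}(\Gamma_0^2) \subset D(\widehat{L})$ and $\widehat{L}_{\delta}\widehat{T}_{\delta}(t)G \in \Lb_{\alpha}$, this orbit is continuously differentiable in $\Lb_{\alpha}$, hence also in $\mathcal{B}_{\alpha}^{\delta}$ by the continuous embedding above, with the same derivative $\widehat{L}_{\delta}\widehat{T}_{\delta}(t)G$. Because $B_{bs}(\Gamma_0^2) \subset D^{\mathcal{B}}(\widehat{L}_{\delta})$ and the formal action of $\widehat{L}_{\delta}$ agrees on both domains, $t \mapsto \widehat{T}_{\delta}(t)G$ is a classical solution in $\mathcal{B}_{\alpha}^{\delta}$ of the abstract Cauchy problem generated by $(\widehat{L}_{\delta}, D^{\mathcal{B}}(\widehat{L}_{\delta}))$ with initial value $G$. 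By uniqueness of classical solutions for a semigroup generator, this forces $\widehat{T}_{\delta}(t)G = U_{\delta}(t)G$ in $\mathcal{B}_{\alpha}^{\delta}$ for every $G \in B_{bs}(\Gamma_0^2)$. Since $B_{bs}(\Gamma_0^2)$ is a core in $\Lb_{\alpha}$, it is norm-dense there; by the continuity of both $\widehat{T}_{\delta}(t)$ on $\Lb_{\alpha}$ and $U_{\delta}(t)$ on $\mathcal{B}_{\alpha}^{\delta}$ combined with the embedding $\Lb_{\alpha} \hookrightarrow \mathcal{B}_{\alpha}^{\delta}$, the identity extends to all $G \in \Lb_{\alpha}$.

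Finally, I would dualize. For any $G \in \Lb_{\alpha} \subset \mathcal{B}_{\alpha}^{\delta}$ and $k_0 \in \mathcal{R}_{\alpha}^{\delta} \subset \K_{\alpha}$,
\[
 \langle G, \widehat{T}_{\delta}(t)^*k_0 \rangle = \langle \widehat{T}_{\delta}(t)G, k_0 \rangle = \langle U_{\delta}(t)G, k_0 \rangle = \langle G, U_{\delta}(t)^*k_0 \rangle.
\]
Since $\Lb_{\alpha}$ separates points of $\K_{\alpha}$, this yields equality of $\widehat{T}_{\delta}(t)^*k_0$ and $U_{\delta}(t)^*k_0$ as elements of $\K_{\alpha}$, i.e. $\lambda$-a.e., which is exactly the claim.

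The only genuinely delicate step is verifying that the $\Lb_{\alpha}$-orbit of $G \in B_{bs}(\Gamma_0^2)$ really is a classical $\mathcal{B}_{\alpha}^{\delta}$-solution of the same Cauchy problem. The rest is soft functional analysis (density and duality). This is where the inequality $R_{\delta} \leq 1$ is crucial: without the embedding $\Lb_{\alpha} \hookrightarrow \mathcal{B}_{\alpha}^{\delta}$ one could not transport differentiability from one space to the other, and the two semigroups, despite sharing a formal generator, would not be immediately comparable.
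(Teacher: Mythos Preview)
Your argument is correct and rests on the same two observations as the paper's proof: the continuous embeddings $\Lb_{\alpha}\hookrightarrow\mathcal{B}_{\alpha}^{\delta}$ and $\mathcal{R}_{\alpha}^{\delta}\hookrightarrow\K_{\alpha}$ (coming from $R_{\delta}\leq 1$), and the fact that $(\widehat{L}_{\delta},D^{\mathcal{B}}(\widehat{L}_{\delta}))$ is an extension of $(\widehat{L}_{\delta},D(\widehat{L}))$. The only difference is on which side you run the uniqueness argument. You work on the primal side: the $\Lb_{\alpha}$-orbit $t\mapsto\widehat{T}_{\delta}(t)G$ is, via the embedding, a classical $\mathcal{B}_{\alpha}^{\delta}$-solution of the Cauchy problem for $U_{\delta}$, hence $\widehat{T}_{\delta}(t)G=U_{\delta}(t)G$ on $\Lb_{\alpha}$, and then you dualize. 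The paper works on the dual side: $u_t^{\delta}=U_{\delta}(t)^*k_0\in\mathcal{R}_{\alpha}^{\delta}\subset\K_{\alpha}$ is, by the extension of generators, a weak solution of \eqref{WCORR} in $\K_{\alpha}$, and the uniqueness statement of Theorem~\ref{POTTSTH:01} forces $u_t^{\delta}=\widehat{T}_{\delta}(t)^*k_0$. Both routes are short and equivalent in strength; your primal-side version has the minor advantage of invoking only standard semigroup uniqueness rather than the $\mathcal{C}$-topology machinery, while the paper's version avoids the density/extension step from $B_{bs}(\Gamma_0^2)$ to $\Lb_{\alpha}$.
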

\begin{proof}
 Observe that $\mathcal{R}_{\alpha}^{\delta} \subset \K_{\alpha}$ is embedded continuously. Consequently, $u_t^{\delta} = U_{\delta}(t)^*k_0$
 and $k_t^{\delta} = \widehat{T}_{\delta}(t)^*k_0$ are well-defined. Since also $\Lb_{\alpha} \subset \mathcal{B}_{\alpha}^{\delta}$ is continuously embedded,
 we obtain $D(\widehat{L}) \subset D^{\mathcal{B}}(\widehat{L}_{\delta})$, i.e. $(\widehat{L}_{\delta}, D^{\mathcal{B}}(\widehat{L}_{\delta}))$
 is an extension of $(\widehat{L}_{\delta}, D(\widehat{L}))$. Therefore, $(u_t^{\delta})_{t \geq 0}$ is also a weak solution to \eqref{WCORR}
 and hence uniqueness implies $k_t^{\delta} = u_t^{\delta}$, $t \geq 0$.
\end{proof}
\begin{Lemma}\label{POTTSTH:03}
 Let $\beta^+ < \alpha^+$, $\beta^- < \alpha^-$, $k_0 \in \mathcal{R}_{\beta}^{\delta}$ and assume that $k_0$ is positive definite. 
 Then, $u_t^{\delta} := U_{\delta}(t)^*k_0$ is positive definite, for any $t \geq 0$.
\end{Lemma}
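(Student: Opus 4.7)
The plan is to realize $u_t^\delta$ as the correlation function of a non-negative $L^1$-density on $\Gamma_0^2$ that evolves under the forward semigroup $S_\delta(t)$ from Step 1. The bridge is the Möbius-type pair
\[
 R_0(\eta) := \int \limits_{\Gamma_0^2}(-1)^{|\xi|}k_0(\eta \cup \xi)\dm \lambda(\xi), \qquad (K_0^*R)(\eta) := \int \limits_{\Gamma_0^2}R(\eta \cup \xi)\dm \lambda(\xi),
\]
which are dual to $K_0^{-1}$ and $K_0$ via \eqref{IBP} and satisfy $K_0^*R_0 = k_0$ by Möbius inversion. First I would verify $R_0 \in L^1(\Gamma_0^2,\dm \lambda)$ and $R_0 \geq 0$. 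Integrability follows from the Ruelle bound for $k_0 \in \mathcal{R}_\beta^\delta$ and $\rho \cdot R_\delta \in L^1(\R^d)$, yielding $|R_0(\eta)| \leq C\,e_\lambda(R_\delta \rho e^\beta;\eta)$ and hence $\int|R_0|\dm \lambda < \infty$. Non-negativity is where positive definiteness of $k_0$ enters: for $G \in B_{bs}(\Gamma_0^2)$ with $G \geq 0$, Fubini and \eqref{IBP} give $\int G R_0 \dm \lambda = \int (K_0^{-1}G)k_0 \dm \lambda$, and $K_0^{-1}G$ is itself positive definite because $K(K_0^{-1}G)(\gamma) = G(\gamma \cap \Lambda) \geq 0$ for any compact $\Lambda$ containing the support of $G$; positive definiteness of $k_0$ then gives $\int G R_0 \dm \lambda \geq 0$, so $R_0 \geq 0$ almost everywhere.

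Next I would establish the intertwining $K_0^* S_\delta(t) R_0 = U_\delta(t)^*k_0$. Set $\widetilde{k}_t := K_0^*S_\delta(t)R_0$. For $G \in B_{bs}(\Gamma_0^2) \subset D^{\mathcal{B}}(\widehat L_\delta)$,
\[
 \frac{\dm}{\dm t}\langle G, \widetilde k_t \rangle = \int \limits_{\Gamma_0^2}L_\delta K_0 G \cdot S_\delta(t) R_0\,\dm \lambda = \int \limits_{\Gamma_0^2}K_0 \widehat L_\delta G \cdot S_\delta(t)R_0\, \dm \lambda = \langle \widehat L_\delta G, \widetilde k_t\rangle,
\]
by the adjoint identity $\int L_\delta F \cdot R \,\dm \lambda = \int F \cdot \mathcal{J}_\delta R\,\dm \lambda$ from Step 1 and the generator-level relation $L_\delta K_0 = K_0 \widehat L_\delta$ on $B_{bs}(\Gamma_0^2)$. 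Since $\widetilde k_0 = k_0$ and $t \mapsto \widetilde k_t$ is continuous with respect to $\mathcal{C}$ (by $|\widetilde k_t(\eta)| \leq \int (S_\delta(t)R_0)(\eta \cup \xi)\dm \lambda(\xi)$ together with strong continuity of $S_\delta(\cdot)R_0$ in $L^1$), the uniqueness of weak solutions from the analogue of Theorem \ref{POTTSTH:01} for $\widehat L_\delta$ (Lemma \ref{POTTSTH:04}) forces $\widetilde k_t = U_\delta(t)^*k_0 = u_t^\delta$.

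The conclusion is then immediate: for any $G \in B_{bs}^+(\Gamma_0^2)$,
\[
 \langle G, u_t^\delta \rangle = \int \limits_{\Gamma_0^2}K_0G(\eta)\cdot (S_\delta(t)R_0)(\eta)\dm \lambda(\eta) = \int \limits_{\Gamma_0^2}(KG)(\eta)\cdot (S_\delta(t)R_0)(\eta)\dm \lambda(\eta) \geq 0,
\]
because $S_\delta(t)$ is positivity preserving and $KG \geq 0$. I expect the main obstacle to be making the intertwining rigorous: $S_\delta(t)$ and $U_\delta(t)^*$ live on Banach spaces of very different nature, and the hypothesis $\beta^\pm < \alpha^\pm$ together with the built-in factor $e_\lambda(R_\delta;\cdot)$ in $\mathcal{R}_\beta^\delta$ is precisely what ensures $R_0 \in L^1(\Gamma_0^2,\dm \lambda)$ with enough decay for $K_0^*S_\delta(t)R_0$ to end up in $\mathcal{R}_\alpha^\delta$, so that both sides of the intertwining are comparable in a common weak sense.
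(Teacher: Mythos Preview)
Your strategy is exactly the paper's strategy---link the correlation-function evolution to the $L^1$ density evolution via the M\"obius pair $(\mathcal{H},K_0^*)$---but you run the intertwining in the opposite direction, and that is where the real difficulty sits. You push $R_0$ forward by $S_\delta(t)$ and then apply $K_0^*$, which forces you to verify that $\widetilde k_t = K_0^* S_\delta(t)R_0$ lies in $\mathcal{R}_\alpha^\delta$ (or at least in $\K_\alpha$) so that the weak-uniqueness statement of Theorem~\ref{POTTSTH:01} applies. You acknowledge this in your last paragraph, but the claim that $\beta^\pm < \alpha^\pm$ together with the factor $e_\lambda(R_\delta;\cdot)$ ``is precisely what ensures'' this is not justified: all you control about $S_\delta(t)R_0$ is its $L^1$-norm (and the moment $L^1((1+|\cdot|)\dm\lambda)$ from Step~1), which gives no pointwise Ruelle-type bound after applying $K_0^*$. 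The paper avoids this by reversing the direction: it starts from the known object $u_t^\delta = U_\delta(t)^*k_0 \in \mathcal{R}_\alpha^\delta$, applies the \emph{bounded} map $\mathcal{H}:\mathcal{R}_\alpha^\delta \to L^1$, and then uses uniqueness of weak solutions in $L^1$ (for the Cauchy problem \eqref{POTTS:05}) to identify $\mathcal{H}u_t^\delta$ with $S_\delta(t)R_0$. The hypothesis $\beta^\pm < \alpha^\pm$ is used not to land in $\mathcal{R}_\alpha^\delta$, but to make $t \mapsto u_t^\delta$ norm-continuous in $\mathcal{R}_\alpha^\delta$ (via Theorem~\ref{POTTSTH:01}(2)), hence $t\mapsto R_t^\delta$ norm-continuous in $L^1$, which is what the $L^1$ uniqueness argument requires.

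A second, smaller gap: your argument for $R_0 \geq 0$ is not correct as written. For a generic $0\le G \in B_{bs}(\Gamma_0^2)$, the inverse $K_0^{-1}G$ does \emph{not} belong to $B_{bs}(\Gamma_0^2)$; one computes $(K_0^{-1}G)(\eta) = (-1)^{|\eta\cap\Lambda^c|}(K_0^{-1}G)(\eta\cap\Lambda)$, which has no spatial cutoff, so the identity $K(K_0^{-1}G)(\gamma) = G(\gamma\cap\Lambda)$ is not available and $K_0^{-1}G \notin B_{bs}^+(\Gamma_0^2)$. The conclusion $R_0 \ge 0$ is nevertheless true (and the paper uses it as well, tacitly, when writing $S_\delta(t)R_0 \ge 0$); it follows from the standard fact that a positive-definite $k_0$ with the fast decay coming from $\mathcal{R}_\beta^\delta$ is the correlation function of a finite measure concentrated on $\Gamma_0^2$, whose $\lambda$-density is exactly $\mathcal{H}k_0$.
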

\begin{proof}
 Define a bounded linear operator $\mathcal{H}: \mathcal{R}_{\alpha}^{\delta} \longrightarrow \Lb_{c}$, for any $c = (c^+,c^-) \in \R^2$, by
 \[
  \mathcal{H}u(\eta) := \int \limits_{\Gamma_0^2}(-1)^{|\xi|}u(\eta \cup \xi)\dm \lambda(\xi).
 \]
 Let $G \in \mathcal{B}_{\alpha}^{\delta}$ be arbitrary. Then, for any $u \in \mathcal{R}_{\alpha}^{\delta}$, we get by Fubini's theorem and \eqref{IBP}
 \begin{align}\label{GMCS:32}
  \langle K_0G, \mathcal{H}u \rangle = \langle G, u\rangle.
 \end{align}
 We can apply Fubini's theorem and \eqref{IBP} since
 \begin{align*}
  &\ \int \limits_{\Gamma_0^2}\int \limits_{\Gamma_0^2}\int \limits_{\Gamma_0^2}|G(\xi)||u(\eta \cup \xi \cup \zeta)|\dm \lambda(\zeta)\dm \lambda(\xi)\dm \lambda(\eta)
  \\ &\leq \Vert u \Vert_{\mathcal{R}_{\alpha}^{\delta}}e^{2 e^{\alpha^+}\langle R_{\delta}\rangle_{\rho} } e^{2 e^{\alpha^-}\langle R_{\delta} \rangle_{\rho}} \int \limits_{\Gamma_0^2}|G(\xi)|e^{\alpha|\xi|} e_{\lambda}(\rho;\xi) e_{\lambda}(R_{\delta};\xi^+)e_{\lambda}(R_{\delta};\xi^-)\dm \lambda(\xi)
 \end{align*}
 is satisfied, where $\langle R_{\delta} \rangle_{\rho} := \int \limits_{\R^d}R_{\delta}(x)\rho(x)\dm x$.
 For the same $u$ and $G \in D^{\mathcal{B}}(\widehat{L})$ we obtain by \eqref{GMCS:32} and $K_{0}\widehat{L}_{\delta}G = L_{\delta}K_0G$ 
 \begin{align}\label{GMCS:33}
  \langle \widehat{L}_{\delta}G, u \rangle = \langle K_0\widehat{L}_{\delta}G, \mathcal{H}u \rangle = \langle L_{\delta}K_0G, \mathcal{H}u\rangle.
 \end{align}
 Observe that
 \begin{align*}
  \langle G,u^{\delta}_t\rangle = \langle G, u_0 \rangle + \int \limits_{0}^{t}\langle \widehat{L}_{\delta}G, u^{\delta}_s \rangle \dm s, \ \ G \in D^{\mathcal{B}}(\widehat{L}).
 \end{align*}
 Let $G \in \K_{c}$, where $c := (\log(2),\log(2))$ . Then, by $M(\eta) \leq 2 |\eta|$, 
 \begin{align*}
  & \int \limits_{\Gamma_0^2}M(\eta)|G(\eta)|e^{\alpha|\eta|}e_{\lambda}(\rho;\eta)e_{\lambda}(R_{\delta};\eta^+)e_{\lambda}(R_{\delta};\eta^-)\dm \lambda(\eta)
  \\ &\leq 2\Vert G \Vert_{\K_{c}} \int \limits_{\Gamma_0^2}|\eta|2^{|\eta|}e^{\alpha|\eta|}e_{\lambda}(\rho;\eta)e_{\lambda}(R_{\delta};\eta^+)e_{\lambda}(R_{\delta};\eta^-)\dm \lambda(\eta)
  \\ &= 2 \Vert G \Vert_{\K_{c}} \sum \limits_{n,m=0}^{\infty}\frac{1}{n!}\frac{1}{m!}(n+m)e^{\alpha^+ n}e^{\alpha^- m} \langle R_{\delta}\rangle_{\rho}^{n+m} < \infty.
 \end{align*}
 This implies $\mathcal{K}_{c} \subset D^{\mathcal{B}}(\widehat{L})$. 
 By \eqref{GMCS:32} and \eqref{GMCS:33} it follows for $R^{\delta}_t := \mathcal{H}u^{\delta}_t \in L^1(\Gamma_0^2, \dm \lambda)$, $t \geq 0$,
 \begin{align}\label{POTTS:11}
  \langle K_0G, R^{\delta}_t \rangle = \langle K_0 G, R_0 \rangle + \int \limits_{0}^{t}\langle L_{\delta}K_0 G, R^{\delta}_s\rangle \dm s, \ \ G \in \K_{c}.
 \end{align}
 For any $F \in D(\mathcal{J}_{\delta}) \subset L^{\infty}(\Gamma_0^2, \dm \lambda)$ we get $|K_0^{-1}F(\eta)| \leq \Vert F \Vert_{L^{\infty}}2^{|\eta|}$
 and hence $D(\mathcal{J}_{\delta}) \subset K_0\K_{c}$. Thus, we can find $G \in \K_{c}$ such that $K_0G = F$.  By \eqref{POTTS:11}, it follows that
 \[
  \langle F, R^{\delta}_t \rangle = \langle F, R_0 \rangle + \int \limits_{0}^{t}\langle \mathcal{J}_{\delta}F, R^{\delta}_s \rangle \dm s, \ \ F \in D(\mathcal{J}_{\delta}).
 \]
 Recall that $k_0 \in \mathcal{R}_{\beta}^{\delta}$, hence, by Theorem \ref{POTTSTH:01}, we see that $u_t^{\delta} = U_{\delta}(t)^*k_0$ is continuous in $t \geq 0$ 
 w.r.t. the norm in $\mathcal{R}_{\alpha}^{\delta}$. Since $\mathcal{H}: \mathcal{R}_{\alpha}^{\delta} \longrightarrow L^1(\Gamma_0^2, \dm \lambda)$ is continuous, 
 $R_t^{\delta} = \mathcal{H}u_t^{\delta}$ is continuous in $t \geq 0$ w.r.t. the norm in $L^1(\Gamma_0^2, \dm \lambda)$. 
 Hence, $(R_t^{\delta})_{t \geq 0}$ is a weak solution to \eqref{POTTS:05}. Uniqueness implies that $R_t^{\delta} = S_{\delta}(t)R_0 \geq 0$.
 Finally, for any $G \in B_{bs}^+(\Gamma_0^2)$ we get
 \[
  \langle G, u_t^{\delta}\rangle = \langle K_0G, R_t^{\delta}\rangle \geq 0, \ \ t \geq 0.
 \]
\end{proof}

\subsubsection*{Step 3: Proof of Proposition \ref{POTTSTH:05}}
Let $\beta^+ < \alpha^+$, $\beta^- < \alpha^-$. First, we consider the special case $\mu_0 \in \mathcal{P}_{\beta}$. Let $k_0 \in \K_{\beta}$ be the associated
correlation function. Define
\[
 k_{0,\delta}(\eta) := k_0(\eta)e_{\lambda}(R_{\delta};\eta^+)e_{\lambda}(R_{\delta};\eta^-), \ \ \delta > 0, \ \eta \in \Gamma_0^2,
\]
then $k_{0,\delta} \in \mathcal{R}_{\beta}^{\delta}$. Moreover, $k_{0,\delta}$ is positive definite, cf. \cite{F11TC}. 
Lemma \ref{POTTSTH:02} implies $\widehat{T}_{\delta}(t)^*k_{0,\delta} = U_{\delta}(t)^*k_{0,\delta} \in \mathcal{R}_{\alpha}^{\delta}$ and 
Lemma \ref{POTTSTH:03} shows that $U_{\delta}(t)^*k_{0,\delta}$ is positive definite. Let $G \in B_{bs}^+(\Gamma_0^2)$, then
$\langle G, \widehat{T}_{\delta}(t)^* k_{0,\delta}\rangle \geq 0$. Observe that
\begin{align}\label{POTTS:06}
 \langle G, \widehat{T}_{\delta}(t)^*k_{0,\delta}\rangle = \langle \widehat{T}_{\delta}(t)^*G - \widehat{T}(t)G, k_{0,\delta}\rangle + \langle \widehat{T}(t)G, k_{0,\delta}\rangle.
\end{align}
For the first term we obtain, by $\Vert k_{0,\delta} \Vert_{\K_{\alpha}} \leq \Vert k_0 \Vert_{\K_{\alpha}}$,
\begin{align*}
 |\langle \widehat{T}_{\delta}(t)^*G - \widehat{T}(t)G, k_{0,\delta}\rangle| 
 \leq \Vert \widehat{T}_{\delta}(t)G - \widehat{T}(t)G\Vert_{\Lb_{\alpha}}\Vert k_0 \Vert_{\K_{\alpha}}.
\end{align*}
The latter tends to zero, see Lemma \ref{POTTSTH:04}. The second term in \eqref{POTTS:06} tends, by dominated convergence, 
to $\langle \widehat{T}(t)G, k_0\rangle = \langle G, \widehat{T}(t)^*k_0\rangle$. Thus
\[
 \langle G, \widehat{T}_{\delta}(t)^* k_{0,\delta}\rangle \longrightarrow \langle G, \widehat{T}(t)^*k_0 \rangle, \ \ \delta \to 0
\]
and hence $\langle G, \widehat{T}(t)^*k_0 \rangle \geq 0$, i.e. $\widehat{T}(t)^*k_0$ is positive definite.

For the general case, let $\mu_0 \in \mathcal{P}_{\alpha}$ with correlation function $k_0 \in \K_{\alpha}$.
Then $k_{0, \delta}(\eta) := e^{-\delta|\eta|} k_0(\eta)$ belongs to $\K_{\alpha - \delta}$ for any $\delta > 0$. 
By previous case, we see that $\widehat{T}(t)^* k_{0,\delta} \in \K_{\alpha}$ is positive definite. Taking the limit $\delta \to 0$ yields the assertion.

\section{Ergodicity}
Suppose the same conditions as for the previous section. The following is the main statement for this section.
\begin{Proposition}\label{PROP:00}
 There exists a unique invariant measure $\mu_{\mathrm{inv}} \in \mathcal{P}_{\alpha}$ associated to $L$, i.e.
 \begin{align}\label{POTTS:08}
  \int \limits_{\Gamma^2}LF(\gamma)\dm \mu_{\mathrm{inv}}(\gamma) = 0, \ \ F \in \mathcal{FP}(\Gamma^2).
 \end{align}
 Let $k_{\mathrm{inv}}$ be the associated correlation function.
 \begin{enumerate}
  \item The semigroup $\widehat{T}(t)$ is uniformly ergodic with exponential rate and the projection operator is given by
  \begin{align}\label{ERGOD:01}
   \widehat{P}G(\eta) = \int \limits_{\Gamma_0^2}G(\xi)k_{\mathrm{inv}}(\xi)\dm \lambda(\xi) \1^*(\eta).
  \end{align}
  \item The adjoint semigroup $\widehat{T}(t)^*$ is uniformly ergodic with exponential rate and the projection operator is given by
  \begin{align}\label{ERGOD:00}
   \widehat{P}^*k(\eta) = k_{\mathrm{inv}}(\eta)k(\emptyset).
  \end{align}
  \item There exists constants $a,b > 0$ such that for all $\mu_0 \in \mathcal{P}_{\alpha}$
  \[
   d_{\alpha}(\mu_t, \mu_{\mathrm{inv}}) \leq a e^{-b t} d(\mu_0, \mu_{\mathrm{inv}}), \ \ t \geq 0
  \]
  holds, where $(\mu_t)_{t \geq 0}$ is the unique weak solution to \eqref{FPE}. 
 \end{enumerate}
\end{Proposition}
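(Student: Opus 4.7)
The plan is to upgrade the $A$-boundedness estimate $\|B'G\|_{\Lb_\alpha}\le a(\alpha)\|MG\|_{\Lb_\alpha}$, $a(\alpha)<1$, implicit in the proof of Theorem \ref{POTTSTH:00}, to a genuine resolvent gap for $\widehat{L}=A+B$ on $\Lb_\alpha$, and then to transfer the resulting splitting to $\K_\alpha$ by duality. The multiplier $A:G\mapsto-MG$ has spectrum $\{0\}\cup\sigma_0$ with $\sigma_0\subset(-\infty,-1]$: the Lebesgue--Poisson atom at $\emptyset$ produces the simple eigenvalue $0$ with eigenvector $\1^{*}$, while $M(\eta)\ge|\eta|\ge 1$ on $\eta\ne(\emptyset,\emptyset)$. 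Together with $|BG|\le B'|G|$, the $A$-bound yields, for $\lambda\ne 0$ in the strip $-b<\operatorname{Re}\lambda\le 0$ and any fixed $b<1-a(\alpha)$,
\[
\|BR(\lambda,A)\|_{\Lb_\alpha}\le a(\alpha)\sup_{\eta\ne\emptyset}\frac{M(\eta)}{|\lambda+M(\eta)|}\le\frac{a(\alpha)}{1-|\operatorname{Re}\lambda|}<1,
\]
so a Neumann series inverts $\lambda-\widehat{L}=(I-BR(\lambda,A))(\lambda-A)$. Combined with $\sigma(\widehat{L})\subset\{\operatorname{Re} z\le 0\}$ from contractivity, this places $\sigma(\widehat{L})\cap\{\operatorname{Re} z>-b\}\subset\{0\}$. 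Contractivity additionally forbids Jordan blocks at $0$, so the Riesz projection $\widehat{P}$ has range $\ker(\widehat{L})$, and analyticity promotes the gap to $\|\widehat{T}(t)(I-\widehat{P})\|_{\Lb_\alpha}\le ae^{-bt}$ for some $a>0$.

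The next task is to identify $\widehat{P}$ as rank one. By duality $\widehat{P}^{*}$ is the Riesz projection of $\widehat{L}^{*}$ at $0$ with range $\ker(\widehat{L}^{*})$, and $\|\widehat{T}(t)^{*}(I-\widehat{P}^{*})\|_{\K_\alpha}\le ae^{-bt}$. The computation $(\widehat{T}(t)^{*}k)(\emptyset)=\langle\widehat{T}(t)\1^{*},k\rangle=k(\emptyset)$ shows that the closed hyperplane $\K_\alpha^{0}:=\{k\in\K_\alpha : k(\emptyset)=0\}$ is $\widehat{T}(t)^{*}$-invariant, hence letting $t\to\infty$ in $\widehat{T}(t)^{*}k\to\widehat{P}^{*}k$ gives $\widehat{P}^{*}(\K_\alpha^{0})\subset\K_\alpha^{0}\cap\ker(\widehat{L}^{*})$. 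Once $\K_\alpha^{0}\cap\ker(\widehat{L}^{*})=\{0\}$ is secured (see the obstacle paragraph), $\widehat{P}^{*}$ vanishes on $\K_\alpha^{0}$, so if we denote by $k_{\mathrm{inv}}$ the generator of $\ker(\widehat{L}^{*})$ normalised by $k_{\mathrm{inv}}(\emptyset)=1$, the formulas $\widehat{P}^{*}k=k(\emptyset)k_{\mathrm{inv}}$ and $\widehat{P}G=\langle G,k_{\mathrm{inv}}\rangle\1^{*}$ follow, giving \eqref{ERGOD:01}--\eqref{ERGOD:00}.

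To recover the invariant measure, pick any positive-definite $k_0\in\K_\alpha$ with $k_0(\emptyset)=1$; by the dual gap, $\widehat{T}(t)^{*}k_0\to k_{\mathrm{inv}}$ in $\K_\alpha$. Each $\widehat{T}(t)^{*}k_0$ is positive definite by Proposition \ref{POTTSTH:05}, and $G\mapsto\langle G,\cdot\rangle$ is $\K_\alpha$-continuous for $G\in B_{bs}^+(\Gamma_0^2)$, so positive definiteness passes to the limit. The classification theorem cited before Section 3 then delivers the unique $\mu_{\mathrm{inv}}\in\mathcal{P}_\alpha$ with $k_{\mu_{\mathrm{inv}}}=k_{\mathrm{inv}}$, and \eqref{POTTS:08} follows from $\langle\langle LF,\mu_{\mathrm{inv}}\rangle\rangle=\langle\widehat{L}G,k_{\mathrm{inv}}\rangle=\langle G,L^{\Delta}k_{\mathrm{inv}}\rangle=0$ for $F=KG\in\mathcal{FP}(\Gamma^2)$. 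Finally, Proposition \ref{POTTSTH:05} gives $k_{\mu_t}=\widehat{T}(t)^{*}k_{\mu_0}$ and $k_{\mu_0}-k_{\mathrm{inv}}\in\K_\alpha^{0}$, so
\[
d_\alpha(\mu_t,\mu_{\mathrm{inv}})=\|\widehat{T}(t)^{*}(I-\widehat{P}^{*})(k_{\mu_0}-k_{\mathrm{inv}})\|_{\K_\alpha}\le ae^{-bt}d_\alpha(\mu_0,\mu_{\mathrm{inv}}).
\]

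The main obstacle is verifying $\K_\alpha^{0}\cap\ker(\widehat{L}^{*})=\{0\}$, since the spectral-gap argument only controls dimensions a posteriori. The natural route uses the positive-definite approximation of Section 3.3: the cut-off stochastic dynamics $S_\delta(t)$ on $L^1(\Gamma_0^2,\dm\lambda)$ is an irreducible birth/death/mutation semigroup with a unique invariant probability density, and the identifications of Lemmata \ref{POTTSTH:02}--\ref{POTTSTH:04} pull this uniqueness back to $\widehat{L}^{*}$ on $\K_\alpha$ in the limit $\delta\to 0$. Once rank one of $\widehat{P}$ is secured, everything else is bookkeeping inside the contractive analytic semigroup framework of Theorem \ref{POTTSTH:00}.
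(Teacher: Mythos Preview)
Your spectral-gap argument on $\Lb_\alpha$ is essentially the paper's: both perturb the multiplication operator $A$ by $B$ via a Neumann series using $\|B'G\|_{\Lb_\alpha}\le a(\alpha)\|MG\|_{\Lb_\alpha}$ with $a(\alpha)<1$, and both obtain $\{0\}$ as the only spectrum of $\widehat{L}$ in a right half-plane $\{\operatorname{Re}\lambda>-\lambda_0\}$. The paper additionally carries out a sectorial estimate, but for the purposes of Proposition~\ref{PROP:00} your version suffices once analyticity is borrowed from Theorem~\ref{POTTSTH:00}.

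The genuine gap is the resolution of what you yourself flag as the main obstacle, namely $\K_\alpha^{0}\cap\ker(L^{\Delta})=\{0\}$. Your proposal to extract this from irreducibility of the substochastic semigroup $S_\delta(t)$ on $L^1(\Gamma_0^2,\dm\lambda)$ and then pass to the limit $\delta\to 0$ does not work as stated. Lemmata~\ref{POTTSTH:02}--\ref{POTTSTH:04} are designed to transport \emph{positive definiteness} from the density picture to correlation functions, not to transport \emph{uniqueness of invariant states}. The identification $R_t^\delta=\mathcal{H}u_t^\delta$ is only available for initial data in the small space $\mathcal{R}_\beta^\delta$, and there is no mechanism offered to show that an arbitrary $k\in\ker(L^\Delta)\cap\K_\alpha$ arises as a $\delta\to 0$ limit of such objects, let alone that uniqueness of invariant densities for each fixed $\delta$ survives the limit.

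The paper handles this point by a direct fixed-point argument on $\K_\alpha$, entirely parallel to your Neumann series on $\Lb_\alpha$. Writing $Sk(\eta)=M(\eta)^{-1}L^\Delta k(\eta)+k(\eta)$ for $\eta\ne\emptyset$ and $Sk(\emptyset)=0$, the equation $L^\Delta k=0$ with $k(\emptyset)=1$ becomes $(1-S)k_1=S\1^*$ for $k_1=(1-\1^*)k\in\K_\alpha^{\ge 1}$. The \emph{same} relative-bound computation that gives $a(\alpha)<1$ in Theorem~\ref{POTTSTH:00} shows, dually, that $\|S\|_{L(\K_\alpha)}<1$, so $k_1$ is uniquely determined and $k_{\mathrm{inv}}=\1^*+(1-S)^{-1}S\1^*$. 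This simultaneously yields existence and your missing uniqueness, and makes the detour through $S_\delta(t)$ unnecessary. Once $\dim\ker(L^\Delta)=1$ is in hand, the paper also avoids your appeal to ``contractivity forbids Jordan blocks'' by working directly with the explicit splitting $\Lb_\alpha=\Lb_\alpha^0\oplus\Lb_\alpha^{\ge 1}$ induced by multiplication with $\1^*$: one checks $0\in\rho(L_{11})$ on $\Lb_\alpha^{\ge 1}$, which immediately forces $\ker(\widehat{L})=\Lb_\alpha^0$ and identifies the projection without any Riesz calculus.
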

The rest of this section is devoted to the proof of this proposition.
Multiplication by $\1^*$ and $1 - \1^*$ defines projection operators $\1^*: \K_{\alpha} \longrightarrow \K_{\alpha}^0$ and
$(1-\1^*): \K_{\alpha} \longrightarrow \K_{\alpha}^{\geq 1}$, respectively. Here, $\K^{\geq 1}_{\alpha} = \{ k \in \K_{\alpha} \ | \ k^{(0,0)} = 0 \}$
and $\K_{\alpha}^{0} = \{ k \in \K_{\alpha} \ | \ k^{(n,m)} = 0, \ \ n+m \geq 1\}$.
By $\1^* (1-\1^*) = (1-\1^*)\1^* = 0$ we obtain $\K_{\alpha} = \K_{\alpha}^0 \oplus \K_{\alpha}^{\geq 1}$.
Define a linear operator $S$ by $Sk(\emptyset) = 0$ and
\[
 Sk(\eta) = \frac{1}{M(\eta)}L^{\Delta}k(\eta) + k(\eta), \ \ \eta \neq \emptyset.
\]
It is not difficult to see that $S$ leaves $\K_{\alpha}^{\geq 1}$ invariant and $\Vert S \Vert_{L(\K_{\alpha})} < 1$.
The next lemma provides existence and uniqueness of solutions to $L^{\Delta}k = 0$. Its proof is an easy modification of the arguments in \cite{FKK12}.
\begin{Lemma}
 The equation
 \begin{align}\label{GMCS:02}
  L^{\Delta}k_{\mathrm{inv}} = 0, \ \ k_{\mathrm{inv}}(\emptyset,\emptyset) = 1
 \end{align}
 has a unique solution $k_{\mathrm{inv}} \in \K_{\alpha}$. This solution is given by
 $k_{\mathrm{inv}} = \1^* + (1 - S)^{-1}S\1^*$, where
 \begin{align*}
  S\1^*(\eta) = \1_{\Gamma_0^{(1)}}(\eta^+)0^{|\eta^-|}z^+ + \1_{\Gamma_0^{(1)}}(\eta^-)0^{|\eta^+|}z^-.
 \end{align*}
 In particular, \eqref{ERGOD:00} is a projection operator on $\K_{\alpha}$ with range 
 \[
  \mathrm{Ran}(\widehat{P}^*) = \{ k \in D(L^{\Delta}) \ | \ L^{\Delta}k = 0 \}
 \]
 and it is given by $\widehat{P}^* = \1^* + (1 - S)^{-1}S\1^*$, where $\1^*$ acts as a multiplication operator.
\end{Lemma}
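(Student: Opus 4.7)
The plan is to decompose the problem according to $\K_{\alpha} = \K_{\alpha}^{0} \oplus \K_{\alpha}^{\geq 1}$. I would first observe that every term appearing in $L^{\Delta}k(\eta)$ vanishes identically when $\eta = (\emptyset, \emptyset)$: the leading contribution $-|\eta|k(\eta)$ is zero, and each summation $\sum_{x \in \eta^{\pm}}(\cdots)$ is empty. Hence the equation $L^{\Delta}k = 0$ imposes no constraint at the empty configuration, the normalization $k_{\mathrm{inv}}(\emptyset, \emptyset) = 1$ may be prescribed freely, and the only substantive condition is $L^{\Delta}k(\eta) = 0$ for every $\eta \neq \emptyset$.

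For $\eta \neq \emptyset$ one has $M(\eta) \geq |\eta| \geq 1$, so the defining identity $Sk(\eta) = L^{\Delta}k(\eta)/M(\eta) + k(\eta)$ rewrites $L^{\Delta}k(\eta) = 0$ as the fixed-point equation $Sk = k$ on $\K_{\alpha}^{\geq 1}$. Splitting $k = \1^{*} + h$ with $h \in \K_{\alpha}^{\geq 1}$, and noting that $S\1^{*} \in \K_{\alpha}^{\geq 1}$ by construction while $\1^{*}$ vanishes on $\K_{\alpha}^{\geq 1}$, the equation restricted to $\eta \neq \emptyset$ becomes
\[
 (I - S) h = S\1^{*}.
\]
At this point I would invoke the contraction property $\Vert S \Vert_{L(\K_{\alpha})} < 1$ recorded in the excerpt: the Neumann series $\sum_{n \geq 0} S^{n}$ gives a bounded inverse of $I - S$ on $\K_{\alpha}$, and since $S(\K_{\alpha}) \subset \K_{\alpha}^{\geq 1}$ this inverse preserves $\K_{\alpha}^{\geq 1}$. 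Consequently $h = (I - S)^{-1} S\1^{*}$ is uniquely determined, yielding simultaneously existence and uniqueness, together with the representation $k_{\mathrm{inv}} = \1^{*} + (I - S)^{-1} S\1^{*}$.

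To identify $S\1^{*}$ I would compute $L^{\Delta}\1^{*}(\eta)$ term by term. The key observation is that
\[
 \mathcal{Q}_{x}(g_{0}, g_{1})\1^{*}(\eta) = \int \limits_{\Gamma_{0}^{2}} f_{x}(g_{0}; \xi^{+}) f_{x}(g_{1}; \xi^{-}) \1^{*}(\eta \cup \xi) \dm \lambda(\xi)
\]
is supported on $\eta \cup \xi = \emptyset$; it therefore vanishes for every $\eta \neq \emptyset$ and equals $1$ at $\eta = \emptyset$. This immediately kills the first three lines of $L^{\Delta}\1^{*}(\eta)$ when $\eta \neq \emptyset$ and collapses the remaining four lines to contributions supported on the singleton sectors $\eta = (\{x\}, \emptyset)$ and $\eta = (\emptyset, \{x\})$, from which the stated expression for $S\1^{*}$ is read off after dividing by $M(\eta)$.

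For the last assertion, $\widehat{P}^{*}k := k(\emptyset)k_{\mathrm{inv}}$ is linear and bounded on $\K_{\alpha}$ since $|k(\emptyset)| \leq \Vert k \Vert_{\K_{\alpha}}$, and it is idempotent because $k_{\mathrm{inv}}(\emptyset, \emptyset) = 1$ forces $(\widehat{P}^{*})^{2}k = k(\emptyset) k_{\mathrm{inv}}(\emptyset) k_{\mathrm{inv}} = \widehat{P}^{*}k$. Its range is the one-dimensional span of $k_{\mathrm{inv}}$, which by linearity of $L^{\Delta}$ and the uniqueness established above coincides with $\{k \in D(L^{\Delta}) : L^{\Delta}k = 0\}$. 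The hardest part of the plan is really only the careful bookkeeping in the explicit formula for $S\1^{*}$; the existence/uniqueness step reduces to a direct Neumann-series argument once the contraction property of $S$ is in hand.
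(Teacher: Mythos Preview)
Your proposal is correct and matches the approach the paper has in mind. The paper does not actually give a proof here; it only states that the argument is ``an easy modification of the arguments in \cite{FKK12}'' and sets things up by recording that $S$ leaves $\K_{\alpha}^{\geq 1}$ invariant with $\Vert S\Vert_{L(\K_{\alpha})}<1$. Your decomposition $k=\1^{*}+h$, reduction of $L^{\Delta}k=0$ on $\{\eta\neq\emptyset\}$ to $(I-S)h=S\1^{*}$, and Neumann-series inversion are exactly the intended steps, and your verification that $\widehat{P}^{*}$ is a projection with one-dimensional range $=\ker L^{\Delta}$ is clean.
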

First we establish ergodicity for $\widehat{T}(t)$. Let $\Lb_{\alpha}^{0} := \{ G \in \Lb_{\alpha} \ | \ G = \kappa \1^*, \ \kappa \in \R\}$ and
$\Lb_{\alpha}^{\geq 1} := \{ G \in \Lb_{\alpha} \ | \ G(\emptyset) = 0 \}$.
Then $\Lb_{\alpha} = \Lb_{\alpha}^{0} \oplus \Lb_{\alpha}^{\geq 1}$ and the projection onto $\Lb_{\alpha}^0$ is given by multiplication
with $\1^*$. Likewise, $1 - \1^*$ projects onto $\Lb_{\alpha}^{\geq 1}$.
Define $B_{01}: \Lb_{\alpha}^{\geq 1} \longrightarrow \Lb_{\alpha}^{0}$, $B_{01}G = \1^*BG$ and 
$L_{11}: \Lb_{\alpha}^{\geq 1} \longrightarrow \Lb_{\alpha}^{\geq 1}$, $L_{11}G = AG + (1-\1^*)BG$. Taking into account $\widehat{L} = \widehat{L}(1-\1^*)$ yields
\begin{align}\label{GMCS:38}
 \widehat{L}G = B_{01}(1-\1^*)G + L_{11}(1-\1^*)G, \ \ G \in \Lb_{\alpha}.
\end{align}
Moreover, since $D(\widehat{L}) = \{ G \in \Lb_{\alpha}\ | \ M\cdot G \in \Lb_{\alpha}\}$ and $\Lb_{\alpha}^{0} \subset D(\widehat{L})$
it follows that $D(L_{11}) = D(\widehat{L})\cap \Lb_{\alpha}^{\geq 1}$. 
Note that $B_{01}$ is given by
\[
 \1^*BG(\eta) = \1^*(\eta)z^-\int \limits_{\R^d}G(\emptyset,x)\dm x + \1^*(\eta)z^+\int \limits_{\R^d}G(x,\emptyset)\dm x
\]
and hence is a positive operator. The next statement was shown for the one-component Glauber dynamics in \cite{KKM10}.
\begin{Theorem}
 Let $a(\alpha) \in (0,1)$ be given by \eqref{POTTS:09} and 
 \begin{align}\label{GMCS:37}
  \omega_0 := \sup \left\{ \omega \in \left[0, \frac{\pi}{4}\right] \ \bigg | \ a(\alpha) < \cos(\omega) \right\}.
 \end{align}
 Then the following statements hold:
 \begin{enumerate}
  \item The point $0$ is an eigenvalue for $(\widehat{L}, D(\widehat{L}))$ with eigenspace $\Lb_{\alpha}^{0}$ and eigenvector $\1^*$.
  \item Let $\lambda_0 := (1 - a(\alpha)) > 0$. Then
  \[
   I_1 := \{ \lambda \in \C \ | \ \mathrm{Re}(\lambda) > - \lambda_0 \} \backslash \{0\}
  \]
  and
  \[
   I_2 := \left\{ \lambda \in \C \ \bigg| \ |\mathrm{arg}(\lambda)| < \frac{\pi}{2} + \omega_0 \right\} \backslash \{0\}
  \]
  belong to the resolvent set $\rho(\widehat{L})$ of $\widehat{L}$ on $\Lb_{\alpha}$.
 \end{enumerate}
\end{Theorem}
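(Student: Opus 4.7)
The plan is to combine the block decomposition \eqref{GMCS:38} with the relative-bound estimate \eqref{POTTS:09} and a Neumann-series resolvent argument. The eigenvalue claim is immediate from Theorem \ref{POTTSTH:00}: $\1^* \in D(\widehat{L})$ with $\widehat{L}\1^* = 0$, so $\Lb_{\alpha}^{0} \subset \ker \widehat{L}$. To prove both the equality $\ker \widehat{L} = \Lb_{\alpha}^{0}$ and the resolvent assertion, I note that in the splitting $\Lb_{\alpha} = \Lb_{\alpha}^{0} \oplus \Lb_{\alpha}^{\geq 1}$ the identity \eqref{GMCS:38} puts $\widehat{L}$ in upper-triangular form with diagonal blocks $0$ and $L_{11}$; block-wise inversion then shows $\lambda \in \rho(\widehat{L})$ whenever $\lambda \neq 0$ and $\lambda \in \rho(L_{11})$, while $\ker \widehat{L}$ can have a non-trivial $\Lb_{\alpha}^{\geq 1}$-component only if $0 \notin \rho(L_{11})$. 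Everything thus reduces to the spectral analysis of $L_{11} = A + (1-\1^*)B$ on $\Lb_{\alpha}^{\geq 1}$.

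For this I factor
\[
 \lambda - L_{11} = (I - T_\lambda)(\lambda - A), \qquad T_\lambda := (1-\1^*)B(\lambda - A)^{-1},
\]
where $(\lambda - A)^{-1}$ is pointwise multiplication by $(\lambda + M(\eta))^{-1}$, well-defined on $\Lb_{\alpha}^{\geq 1}$ whenever $\lambda \notin (-\infty, -1]$ since $M \geq 1$ on $\Gamma_0^2 \setminus \{(\emptyset,\emptyset)\}$. Combining $|(1-\1^*)BG| \leq B'|G|$ with \eqref{POTTS:09} applied to $|(\lambda - A)^{-1}G| = |G|/|\lambda + M|$ gives
\[
 \Vert T_\lambda G \Vert_{\Lb_{\alpha}} \leq a(\alpha)\,\sup_{M \geq 1} \frac{M}{|\lambda + M|}\,\Vert G \Vert_{\Lb_{\alpha}},
\]
so the task reduces to showing that the prefactor is $<1$ on $I_1 \cup I_2 \cup \{0\}$. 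Writing $\lambda = \mu + i\nu$ with $\mu > -\lambda_0 = a(\alpha) - 1$, one has $M + \mu \geq 1 + \mu > a(\alpha) > 0$, hence $|\lambda + M| \geq M + \mu$ and the supremum equals $\max(1, 1/(1+\mu))$; the choice $\lambda_0 = 1 - a(\alpha)$ then yields $a(\alpha)\max(1, 1/(1+\mu)) < 1$. For $\lambda = re^{i\theta} \in I_2$ the identity $M^2/|\lambda + M|^2 = (1 + 2(r/M)\cos\theta + (r/M)^2)^{-1}$ yields $\sup_{M \geq 1}(M/|\lambda + M|) \leq \max(1, 1/|\sin\theta|)$; the restriction $|\theta| < \pi/2 + \omega_0$ forces $|\sin\theta| > \cos \omega_0$, and definition \eqref{GMCS:37} of $\omega_0$ secures $a(\alpha)/|\sin\theta| < 1$.

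Once $\Vert T_\lambda \Vert < 1$ a Neumann series provides $(\lambda - L_{11})^{-1} = (\lambda - A)^{-1}(I - T_\lambda)^{-1}$, giving $I_1 \cup I_2 \subset \rho(L_{11}) \subset \rho(\widehat{L})$ and proving (2). The same estimate at $\lambda = 0$ yields $0 \in \rho(L_{11})$, closing (1) via the triangular argument from the first paragraph. I expect the main obstacle to be the sectorial bound in the subregion $\pi/2 < |\arg\lambda| < \pi/2 + \omega_0$, where the resolvent of $A$ is no longer contractive: the sharp relative bound $a(\alpha)$ from \eqref{POTTS:09} must be matched exactly against the angular opening through the geometric identity $\sup_{s > 0}(1 + 2s\cos\theta + s^2)^{-1/2} = 1/|\sin\theta|$, which is precisely what distinguishes the cut-off $\omega_0 \leq \pi/4$; by contrast the real-axis case $I_1$ is comparatively routine.
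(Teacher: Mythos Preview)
Your proof is correct and follows the same overall architecture as the paper: block-triangular decomposition \eqref{GMCS:38}, factorization $\lambda - L_{11} = (I - T_\lambda)(\lambda - A)$, and a Neumann series controlled by \eqref{POTTS:09}. The one notable difference is that for $I_1$ you bound $\sup_{M \geq 1} M/|\lambda + M|$ directly in a single step, whereas the paper first treats $\mathrm{Re}(\lambda) \geq 0$ and then extends to $-\lambda_0 < \mathrm{Re}(\lambda) < 0$ via a second perturbation $(u + iw - L_{11}) = (1 + uR(iw;L_{11}))(iw - L_{11})$; your route is cleaner and gives the same region.
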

\begin{proof}
 Let $(A_1, D(L_{11}))$ be the restriction of $(A,D(\widehat{L}))$ to $\Lb_{\alpha}^{\geq 1}$ and denote by $\Vert \cdot \Vert_{\Lb_{\alpha}^{\geq 1}}$
 the norm on $\Lb_{\alpha}^{\geq 1}$. Observe that $M(\eta) \geq 1$ for all $|\eta| \geq 1$. 
 Then, for any $\lambda = u + i w$, $u \geq 0$, $w \in \R$, by $M(\eta) \geq 1$ for all $|\eta| \geq 1$,
 \[
  \left| \frac{G}{\lambda + M(\eta)} \right| \leq \frac{|G|}{\sqrt{ (u + 1)^2 + w^2 }} \leq |G| \min\left( \frac{1}{|\lambda|}, \frac{1}{\sqrt{1 + w^2}}\right).
 \]
 This implies $\lambda \in \rho(A_1)$ and
 \begin{align}\label{GMCS:35}
  \Vert R(\lambda;A_1)G \Vert_{\Lb_{\alpha}^{\geq 1}} \leq \min\left( \frac{1}{|\lambda|}, \frac{1}{\sqrt{1 + w^2}}\right)\Vert G \Vert_{\Lb_{\alpha}^{\geq 1}}.
 \end{align} 
 Consider the decomposition
 \begin{align}\label{GMCS:36}
  (\lambda - L_{11}) = (1 - (1-\1^*)BR(\lambda; A_1))(\lambda - A_1).
 \end{align}
 Then, by \eqref{POTTS:09},
 \begin{align*}
  \Vert (1 - \1^*)BG \Vert_{\Lb_{\alpha}^{\geq 1}} 
  \leq \int \limits_{\Gamma_0^2} |BG(\eta)|e^{\alpha|\eta|}e_{\lambda}(\rho;\eta)\dm \lambda(\eta)
  \leq a(\alpha) \Vert M\cdot G \Vert_{\Lb_{\alpha}^{\geq 1}},
 \end{align*}
 for any $G \in \Lb_{\alpha}^{\geq 1}$. This implies that $(1 - (1-\1^*)BR(\lambda;A_1))$ is invertible on $\Lb_{\alpha}^{\geq 1}$, i.e. $\lambda \in \rho(L_{11})$, and
 \begin{align}\label{GMCS:34}
  R(\lambda; L_{11}) = R(\lambda;A_1) (1 - (1-\1^*)BR(\lambda; A_1))^{-1}.
 \end{align}
 In particular, we obtain for $\lambda = u + iw$, $u \geq 0$, $w \in \R$ by \eqref{GMCS:34} and \eqref{GMCS:35}
 \[
  \Vert R(\lambda;L_{11})G \Vert_{\Lb_{\alpha}^{\geq 1}} \leq \frac{ \min\left( \frac{1}{|\lambda|}, \frac{1}{\sqrt{1 + w^2}}\right)}{1 - a(\alpha)}\Vert G \Vert_{\Lb_{\alpha}^{\geq 1}}
 \]
 and for $\lambda = iw$, $w \in \R$
 \[
  \Vert R(iw, L_{11})G \Vert_{\Lb_{\alpha}^{\geq 1}} \leq \frac{\sqrt{1 + w^2}^{-1}}{1 - a(\alpha)}\Vert G \Vert_{\Lb_{\alpha}^{\geq 1}}.
 \]
 For $\lambda = u + iw$, $0 > u > - \lambda_0$ and $w \in \R$ write
 \[
  ( u +iw - L_{11})= (1 + u R(iw;L_{11}))(iw - L_{11}).
 \]
 Then, by $|u| < \lambda_0$ and $\frac{|u|}{\sqrt{1 + w^2}}\frac{1}{1-a} \leq \frac{|u|}{\lambda_0} < 1$ we obtain $\lambda \in \rho(L_{11})$ and
 \[
  \Vert R(\lambda; L_{11}) G \Vert_{\Lb_{\alpha}^{\geq 1}} \leq \frac{\sqrt{1 + w^2}^{-1}}{1 - a(\alpha)} \left(1 - \frac{|u|}{\lambda_0}\right)^{-1}\Vert G \Vert_{\Lb_{\alpha}^{\geq 1}}.
 \]
 Therefore, $I_1$ belongs to the resolvent set of $L_{11}$. 
 For $I_2$ let $\lambda = u + iw \in I_2$ and $u < 0$. Then, there exists $\omega \in (0,\omega_0)$ such that $|\mathrm{arg}(\lambda)| < \frac{\pi}{2} + \omega$ and hence
 \[
  |w| = |\tan(\arg(\lambda))| |u| \geq \cot(\omega) |u|.
 \]
 This implies for $\eta \neq \emptyset$
 \[
  |\lambda + M(\eta)|^2 = (u + M(\eta))^2 + w^2 \geq  (u + M(\eta))^2 + \cot(\omega)^2 u^2.
 \]
 The right-hand side is minimal for the choice $u = - \frac{M(\eta)}{1 + \cot(\omega)^2}$ which yields
 \begin{align*}
  |\lambda + M(\eta)|^2 &\geq M(\eta)^2\left( \left( \frac{\cot(\omega)^2}{ 1 + \cot(\omega)^2}\right)^2 + \frac{\cot(\omega)^2}{(1+\cot(\omega)^2)^2}\right)
  \\ &= M(\eta)^2 \frac{\cot(\omega)^2}{1+\cot(\omega)^2} = M(\eta)^2 \cos(\omega)^2.
 \end{align*}
 Then, by 
 \[
  \Vert (1-\1^*)BR(\lambda;A_1)G\Vert_{\Lb_{\alpha}^{\geq 1}} \leq a(\alpha) \Vert A_1 R(\lambda;A_1)G\Vert_{\Lb_{\alpha}^{\geq 1}} \leq \frac{a(\alpha)}{\cos(\omega)}\Vert G \Vert_{\Lb_{\alpha}^{\geq 1}} 
 \]
 and \eqref{GMCS:37} we have $a(\alpha) < \cos(\omega)$.
 By \eqref{GMCS:36} we obtain $I_2 \subset \rho(L_{11})$. 
 Moreover, for each $\lambda = u + iw$ such that $\frac{\pi}{2} < |\mathrm{arg}(\lambda)| < \frac{\pi}{2} + \omega$ and, for some $\omega \in (0,\omega_0)$,
 \begin{align*}
  \Vert R(\lambda;L_{11})G \Vert_{\Lb_{\alpha}^{\geq 1}} &\leq \frac{\sqrt{(u^2 + 1)^2 + w^2}^{-1}}{1 - \frac{a(\alpha)}{\cos(\omega)}} \Vert G \Vert_{\Lb_{\alpha}^{\geq 1}}
  \\ &\leq \frac{(1 - \frac{a(\alpha)}{\cos(\omega)})^{-1}}{|w|} \Vert G \Vert_{\Lb_{\alpha}^{\geq 1}}
  \leq \sqrt{2}\frac{(1 - \frac{a(\alpha)}{\cos(\omega)})^{-1}}{|\lambda|} \Vert G \Vert_{\Lb_{\alpha}^{\geq 1}},
 \end{align*}
 where we have used $|w| \geq \frac{|\lambda|}{\sqrt{2}}$.
 For the first claim let $\psi \in D(\widehat{L})$ be an eigenvector to the eigenvalue $0$. The decomposition $\psi = \1^* \psi + (1 - \1^*)\psi = \psi_0 + \psi_1$ with
 $\psi_0 \in \Lb_{\alpha}^{0}$ and $\psi_1 \in \Lb_{\alpha}^{\geq 1} \cap D(\widehat{L}) = D(L_{11})$ yields, by \eqref{GMCS:38},
 \[
  0 = \widehat{L}\psi = \1^* B\psi_1 + L_{11}\psi_1 \in \Lb_{\alpha}^0 \oplus \Lb_{\alpha}^{\geq 1}.
 \]
 Hence $L_{11}\psi_1 = 0$ and since $0 \in \rho(L_{11})$ also $\psi_1 = 0$.
 For the second statement let $\lambda \in I_1 \cup I_2$ and $H = H_0 + H_1 \in \Lb_{\alpha}^0 \oplus \Lb_{\alpha}^{\geq 1}$. 
 Then, we have to find $G \in D(\widehat{L})$ such that
 \[
  (\lambda - \widehat{L})G = H.
 \]
 Using again the decomposition of $\widehat{L}$, above equation is equivalent to the system of equations
 \begin{align*}
  \lambda G_0 - \1^* BG_1 &= H_0
  \\ (\lambda - L_{11})G_1 &= H_1.
 \end{align*}
 Since $\lambda \in I_1 \cup I_2 \subset \rho(L_{11})$ the second equation has a unique solution on $\Lb_{\alpha}^{\geq 1}$ given by $G_1 = R(\lambda;L_{11})H_1$.
 Therefore, $G_0$ is given by
 \[
  G_0 = \frac{1}{\lambda}\left( H_0 + \1^* BR(\lambda; L_{11})H_1\right).
 \]
\end{proof}
\begin{Remark}\label{GMCSRK:01}
 The proof shows that for any $\e > 0$ there exists $\omega = \omega(\e) \in (0, \frac{\pi}{2})$ such that
 \[
  \Sigma(\e) := \left\{ \lambda \in \C \ \bigg| \ |\mathrm{arg}(\lambda + \lambda_0 - \e)| \leq \frac{\pi}{2} + \omega \right\} \subset I_1 \cup I_2 \cup \{0\}
 \]
 and there exists $M(\e) > 0$ with
 \[
  \Vert R(\lambda; L_{11})G\Vert_{\Lb_{\alpha}^{\geq 1}} \leq \frac{M(\e)}{|\lambda|} \Vert G \Vert_{\Lb_{\alpha}^{\geq 1}}
 \]
 for all $\lambda \in \Sigma(\e) \backslash \{0\}$. Moreover, $(L_{11}, D(L_{11}))$ is a sectorial operator of angle $\omega_0$ on $\Lb_{\alpha}^{\geq 1}$.
 Denote by $\widetilde{T}(t)$ the bounded analytic semigroup on $\Lb_{\alpha}^{\geq 1}$ given by
 \begin{align}\label{GMCS:53}
  \widetilde{T}(t) = \frac{1}{2\pi i} \int \limits_{\sigma} e^{\zeta t}R(\zeta;L_{11}) \dm \zeta, \ \ t > 0,
 \end{align}
 where the integral converges in the uniform operator topology, see \cite{PAZ83}. Here, $\sigma$ denotes any piecewise smooth curve in
 \[
  \left\{ \lambda \in \C \ \bigg| \ |\mathrm{arg}(\lambda)| < \frac{\pi}{2}+\omega_0 \right\} \backslash \{0\}
 \]
 running from $\infty e^{-i\theta}$ to $\infty e^{i \theta}$ for $\theta \in (\frac{\pi}{2}, \frac{\pi}{2}+ \omega_0)$.
\end{Remark}
The $\Lb_{\alpha}^{\geq 1}$ part of $\widehat{T}(t)$ is given by $(1 - \1^*)\widehat{T}(t)|_{\Lb_{\alpha}^{\geq 1}}$ and hence has the generator
$(1 - \1^*)\widehat{L}G = L_{11}G$. As a consequence, we obtain $\widetilde{T}(t) = (1 - \1^*)\widehat{T}(t)|_{\Lb_{\alpha}^{\geq 1}}$.
This yields the decomposition
\begin{align}\label{GMCS:67}
 \widehat{T}(t) = \1^* + \1^*\widehat{T}(t)(1-\1^*) + \widetilde{T}(t)(1-\1^*), \ \ t \geq 0.
\end{align}
By duality we see that the adjoint semigroup $(\widehat{T}(t)^*)_{t \geq 0}$ on $\K_{\alpha}$ admits the decomposition
\begin{align}\label{GMCS:68}
 \widehat{T}(t)^* = \1^* + (1-\1^*)\widehat{T}(t)^*\1^* + \widetilde{T}(t)^*(1-\1^*), \ \ t \geq 0,
\end{align}
where $\widetilde{T}(t)^* \in L(\K_{\alpha}^{\geq 1})$ is the adjoint semigroup to $(\widetilde{T}(t))_{t \geq 0}$.
\begin{Lemma}
 The projection operator $\widehat{P}: \Lb_{\alpha} \longrightarrow \Lb_{\alpha}^0$, given by \eqref{ERGOD:01}, satisfies
 \[
  \langle \widehat{P}G, k \rangle = \langle G, \widehat{P}^*k\rangle.
 \]
 Moreover, we have $\widehat{P} = \widehat{T}(t)\widehat{P} = \widehat{P}\widehat{T}(t)$ and 
 \begin{align}\label{GMCS:69}
  \widehat{T}(t)^*\widehat{P}^* = \widehat{P}^*\widehat{T}(t)^* = \widehat{P}^*.
 \end{align}
\end{Lemma}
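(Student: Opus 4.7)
The plan is to verify the four identities sequentially, with each leveraging the earlier ones via duality. I would begin by establishing the duality $\langle \widehat{P}G, k\rangle = \langle G, \widehat{P}^*k\rangle$ directly, then reduce $\widehat{T}(t)\widehat{P} = \widehat{P}$ to the already-known invariance $\widehat{T}(t)\1^* = \1^*$, while $\widehat{P}\widehat{T}(t) = \widehat{P}$ will require the key auxiliary fact $\widehat{T}(t)^* k_{\mathrm{inv}} = k_{\mathrm{inv}}$. The two remaining adjoint identities then follow by dualization.

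For the duality, I would substitute \eqref{ERGOD:01} and \eqref{ERGOD:00} and compute both sides by Fubini. The left-hand side reads
\[
\int_{\Gamma_0^2} \Big( \int_{\Gamma_0^2} G(\xi) k_{\mathrm{inv}}(\xi)\dm\lambda(\xi)\Big)\1^*(\eta)k(\eta)\dm\lambda(\eta) = k(\emptyset,\emptyset)\int_{\Gamma_0^2} G(\xi)k_{\mathrm{inv}}(\xi)\dm\lambda(\xi),
\]
using $\int \1^*(\eta)k(\eta)\dm\lambda(\eta)=k(\emptyset,\emptyset)$, and the right-hand side reduces to the same expression. The interchange is legitimate because the Ruelle bound for $k_{\mathrm{inv}}\in\K_\alpha$ gives $G\cdot k_{\mathrm{inv}}\in L^1(\Gamma_0^2,\dm\lambda)$ whenever $G\in\Lb_\alpha$. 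Next, $\widehat{T}(t)\widehat{P} = \widehat{P}$ is immediate: $\widehat{P}G = \langle G, k_{\mathrm{inv}}\rangle\,\1^*$ is a scalar multiple of $\1^*$, and Theorem \ref{POTTSTH:00} gives $\widehat{T}(t)\1^* = \1^*$. For $\widehat{P}\widehat{T}(t) = \widehat{P}$, I apply the established duality to write
\[
\widehat{P}\widehat{T}(t)G = \langle \widehat{T}(t)G, k_{\mathrm{inv}}\rangle\,\1^* = \langle G, \widehat{T}(t)^* k_{\mathrm{inv}}\rangle\,\1^*,
\]
reducing the claim to $\widehat{T}(t)^* k_{\mathrm{inv}} = k_{\mathrm{inv}}$. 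By the preceding lemma, $k_{\mathrm{inv}}\in D(L^\Delta)$ with $L^\Delta k_{\mathrm{inv}} = 0$; since $0\in\K_\alpha^\odot$ trivially, this places $k_{\mathrm{inv}}\in D(\widehat{L}^\odot)$. The constant path $t\mapsto k_{\mathrm{inv}}$ is then a classical solution of \eqref{CORR} with vanishing right-hand side, and uniqueness of classical solutions on $\K_\alpha^\odot$ forces $\widehat{T}(t)^\odot k_{\mathrm{inv}} = k_{\mathrm{inv}}$, which coincides with $\widehat{T}(t)^* k_{\mathrm{inv}}$ on $\K_\alpha^\odot$.

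The two adjoint identities follow at once by dualizing: $\widehat{T}(t)^*\widehat{P}^* = \widehat{P}^*$ is the adjoint of $\widehat{P}\widehat{T}(t) = \widehat{P}$, and $\widehat{P}^*\widehat{T}(t)^* = \widehat{P}^*$ is the adjoint of $\widehat{T}(t)\widehat{P} = \widehat{P}$, both transferred through the verified duality. The only delicate point is the invariance $\widehat{T}(t)^* k_{\mathrm{inv}} = k_{\mathrm{inv}}$, which must be proved despite the lack of strong continuity of $\widehat{T}(t)^*$ on all of $\K_\alpha$; this is precisely what forces the detour through the $\odot$-subspace and the uniqueness result for the classical Cauchy problem \eqref{CORR}. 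Once this is secured, everything else is bookkeeping with the projection structure.
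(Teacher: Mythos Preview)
Your argument is correct. The paper states this lemma without proof, so there is no proof to compare against; your reconstruction fills in exactly the details the authors presumably had in mind. One minor remark: for the invariance $\widehat{T}(t)^*k_{\mathrm{inv}}=k_{\mathrm{inv}}$ you could equally well invoke Theorem~\ref{POTTSTH:01} directly (the constant path is trivially $\mathcal{C}$-continuous and satisfies \eqref{POTTS:00} since $\langle \widehat{L}G,k_{\mathrm{inv}}\rangle=\langle G,L^\Delta k_{\mathrm{inv}}\rangle=0$), avoiding the detour through $\K_\alpha^\odot$, but your route via classical solutions on the sun-dual is equally valid.
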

Now we are prepared to prove Proposition \ref{PROP:00}.
\begin{proof}(Proposition \ref{PROP:00})\\
 The spectral properties stated in Remark \ref{GMCSRK:01}, formulas \eqref{GMCS:53}, \eqref{GMCS:67} and, \eqref{GMCS:68}
 imply that for any $\e > 0$ there exists $C(\e) > 0$ such that
 \[
  \Vert (1-\1^*)\widehat{T}(t)G\Vert_{\Lb_{\alpha}} \leq C(\e)e^{-(\lambda_0 - \e)t}\Vert G \Vert_{\Lb_{\alpha}}, \ \ G \in \Lb_{\alpha}^{\geq 1}, \ \ t \geq 0.
 \]
 Repeat, e.g., the arguments in \cite{KKM10}. This yields 
 \[
  \Vert \widehat{T}(t)^*k \Vert_{\K_{\alpha}} \leq C(\e)e^{-(\lambda_0 - \e)t}\Vert k \Vert_{\K_{\alpha}}, \ \ k \in \K_{\alpha}^{\geq 1}.
 \]
 Let $k \in \K_{\alpha}$, we obtain, by \eqref{ERGOD:00},
 \[
  k - \widehat{P}^*k = (1-\1^*)k \cdot k_{\mathrm{inv}} \in \K_{\alpha}^{\geq 1}.
 \]
 Using \eqref{GMCS:69}, we see that
 \begin{align}\label{GMCS:64}
  \Vert \widehat{T}(t)^*k - \widehat{P}^*k \Vert_{\K_{\alpha}} = \Vert \widehat{T}(t)^*(k - \widehat{P}^*k)\Vert_{\K_{\alpha}}
  \leq C(\e)e^{-(\lambda_0 - \e)t}\Vert k - \widehat{P}^*k \Vert_{\K_{\alpha}}
 \end{align}
 holds. This shows that $\widehat{T}(t)^*$ is uniformly ergodic with exponential rate. By duality also $\widehat{T}(t)$ is uniformly ergodic with exponential rate.
 Let $\mu_0 \in \mathcal{P}_{\alpha}$ and $\mu_t \in \mathcal{P}_{\alpha}$ be the weak solution to \eqref{FPE}. Denote by $(k_{\mu_t})_{t \geq 0} \subset \K_{\alpha}$ 
 its associated family of correlation functions. Then, for any $t \geq 0$,
 \begin{align*}
  \Vert k_{\mu_t} - k_{\mathrm{inv}} \Vert_{\K_{\alpha}} 
  \leq C(\e)e^{-(\lambda_0 - \e)t} \Vert k_{\mu_0} - k_{\mathrm{inv}} \Vert_{\K_{\alpha}}
 \end{align*}
 shows that $k_{\mathrm{inv}}$ is a limit of positive definite functions. Hence, it is positive definite.
 Thus, there exists a unique measure $\mu_{\mathrm{inv}} \in \mathcal{P}_{\alpha}$ having $k_{\mathrm{inv}}$ as its correlation function. 
 Since $\widehat{T}(t)^*k_{\mathrm{inv}} = k_{\mathrm{inv}}$, it follows that $\mu_{\mathrm{inv}}$ is invariant for $L$. 
 Property \eqref{POTTS:08} follows immediately from $L^{\Delta}k_{\mathrm{inv}} = 0$.
\end{proof}

\section{Mesoscopic limit}
The general approach to mesoscopic limits and particular examples can be found in \cite{FKK10, FFHKKK15} and the references therein. 
There are several scalings which can be used to obtain the mesoscopic limit and related kinetic equations. In this work we use the so-called Vlasov scaling.
For convenience of the reader, we give a brief description.
The aim is to produce a certain scaling $L \longmapsto L_n$, $n \in \N$ such that the following scheme holds.
Let $L_n^{\Delta}$ be the scaled operator on correlation functions and $e^{tL_{n}^{\Delta}}$ the (heuristic) representation of the scaled evolution of correlation functions.
The particular choice of $L \longrightarrow L_{n}$ should preserve the order of singularity. 
Namely, for $\beta > 0$ let $R_{\beta}G(\eta) := \beta^{|\eta|}G(\eta)$, then
\begin{align}\label{MCSL:11}
 R_{n^{-1}}e^{tL_{n}^{\Delta}}R_{n}k \longrightarrow T_V^{\Delta}(t)k, \ \ n \to \infty
\end{align}
should exist. The evolution $T_V^{\Delta}(t)$ should preserve Lebesgue-Poisson exponentials, 
i.e. if $r_0(\eta) = e_{\lambda}(\rho_0^-, \eta^-)e_{\lambda}(\rho_0^{+};\eta^+)$, then
$T_V^{\Delta}(t)r_0(\eta) = e_{\lambda}(\rho_t^{-},\eta^-)e_{\lambda}(\rho_t^{+};\eta^+)$. 
We will show that $\rho_t^{-}, \rho_t^{+}$ satisfy a certain system of non-linear integro-differential equations.

Instead of investigating the limit \eqref{MCSL:11}, observe that formally 
\[
 R_{n^{-1}}e^{tL_{n}^{\Delta}}R_{n} = e^{tR_{n^{-1}}L_n^{\Delta}R_{n}} 
\]
Thus, it is the same to consider renormalized operators $L_{n,\mathrm{ren}}^{\Delta} := R_{n^{-1}}L_n^{\Delta}R_{n}$ 
and study the behaviour of the renormalized semigroups $T_{n,\mathrm{ren}}^{\Delta}(t) := e^{tL_{n,\mathrm{ren}}^{\Delta}}$, as $n \to \infty$.
We will prove that the limit
\begin{align}\label{MCSL:12}
 L_{n,\mathrm{ren}}^{\Delta}\longrightarrow L_V^{\Delta}
\end{align}
exists and $L_V^{\Delta}$ is the generator of a semigroup $T_V^{\Delta}(t) = e^{tL_V^{\Delta}}$. 
The limit \eqref{MCSL:11} is then obtained by showing the convergence
\begin{align}\label{MCSL:13}
 T_{n,\mathrm{ren}}^{\Delta}(t) \longrightarrow T_V^{\Delta}(t)
\end{align}
in a proper sense.

Note that $L_{n,\mathrm{ren}}^{\Delta}$ and $L_V^{\Delta}$ are operators on $\K_{\alpha}$ and therefore cannot be generators of strongly continuous semigroups.
Hence, we consider first the scaled evolution on quasi-observables $\widehat{L}_n := K^{-1}L_n K$ and 
the renormalized operators $\widehat{L}_{n,\mathrm{ren}} = R_{n}\widehat{L}_nR_{n^{-1}}$.
We show that $\widehat{L}_{n,\mathrm{ren}}$ is the generator of an analytic semigroup $\widehat{T}_{n,\mathrm{ren}}(t)$ of contractions and prove that 
$\widehat{L}_{n,\mathrm{ren}} \longrightarrow \widehat{L}_V$, as $n \to \infty$. Here, $\widehat{L}_V$ is again the generator of an analytic semigroup
$\widehat{T}_V(t)$ of contractions. By Trotter-Kato approximation, it follows that $\widehat{T}_{n,\mathrm{ren}}(t) \longrightarrow \widehat{T}_V(t)$
strongly in $\Lb_{\alpha}$. By duality we obtain \eqref{MCSL:12} and \eqref{MCSL:13}.

\subsection{Assumptions}
Suppose that $\phi^{\pm}, \psi^{\pm}, \kappa^{\pm}, \tau^{\pm} \geq 0$ are symmetric, bounded and there exists $\alpha = (\alpha^+, \alpha^-) \in \R^2$
and $\rho \geq 1$ measurable and locally bounded such that the conditions below are satisfied.
\begin{enumerate}
 \item There exists $h: \R^d \longrightarrow \R_+$ with $h \cdot \rho \in L^1(\R^d)$ such that
  \[
   \sup \limits_{y \in \R^d}\ \frac{g(x-y)}{h(y)} =: C_x < \infty, \ \ x \in \R^d
  \]
 for all $g \in \{ \phi^{\pm}, \psi^{\pm}, \kappa^{\pm}, \tau^{\pm}\}$.
 \item We have
  \begin{align*}
   \sup \limits_{x \in \R^d}\ e^{-\alpha^+}e^{e^{\alpha^+}(\phi^+\ast \rho)(x)} e^{e^{\alpha^-}(\psi^+ \ast \rho)(x)} + e^{e^{\alpha^-}(\kappa^- \ast \rho)(x)}e^{e^{\alpha^+}(\tau^- \ast \rho)(x)} &< 2
   \\ \sup \limits_{x \in \R^d}\ e^{-\alpha^-}e^{e^{\alpha^-}(\phi^- \ast \rho)(x)} e^{e^{\alpha^+}(\psi^- \ast \rho)(x)} + e^{e^{\alpha^+}(\kappa^+ \ast \rho)(x)} e^{e^{\alpha^-}(\tau^+ \ast \rho)(x)} &< 2
   \\ \sup \limits_{x \in \R^d}\ e^{e^{\alpha^+}(\kappa^+ \ast \rho)(x)} e^{e^{\alpha^-}(\tau^+ \ast \rho)(x)}e^{\alpha^- - \alpha^+} &< 1
   \\ \sup \limits_{x \in \R^d}\ e^{e^{\alpha^-}(\kappa^- \ast \rho)(x)}e^{e^{\alpha^+}(\tau^- \ast \rho)(x)}e^{\alpha^+ - \alpha^-} &< 1.
  \end{align*}
\end{enumerate}

\subsection{Scaling}
Put $z^{\pm} \longmapsto n z^{\pm}$ and scale the potentials by $\frac{1}{n}$, i.e. $g \longmapsto \frac{1}{n}g$ where 
$g \in \{\phi^{\pm}, \psi^{\pm}, \kappa^{\pm}, \tau^{\pm}\}$. Denote by $L_{n}$ the corresponding scaled Markov operator. 
Let 
\[
 M_n(\eta) = |\eta^+| + |\eta^-| + \sum \limits_{x \in \eta^+}e^{-n E_{\kappa^+}(x, \eta^+ \backslash x)}e^{-n E_{\tau^+}(x,\eta^-)} 
 + \sum \limits_{x \in \eta^-}e^{-nE_{\kappa^-}(x,\eta^- \backslash x)}e^{-n E_{\tau^-}(x,\eta^+)}
\]
be the scaled cumulative death intensity and
\[
 D(\widehat{L}_n) := \{ G \in \Lb_{\alpha} \ | \ M_n \cdot G \in \Lb_{\alpha} \}.
\]
Let $\widehat{L}_n := K^{-1}L_nK$ be defined on $D(\widehat{L}_n)$ and put $\widehat{L}_{n,\mathrm{ren}} := R_{n}\widehat{L}_nR_{n^{-1}}$.
Then, $\widehat{L}_{n, \mathrm{ren}} = A_n + B_n$ where $(A_nG)(\eta) = -M_n(\eta)G(\eta)$.
Let $f^n_x(g;\eta) := \prod \limits_{y \in \eta}n \left( e^{-n g(x - y)} - 1\right)$, the second operator is given by
\begin{align*}
 &\ (B_nG)(\eta) = z^+ \sum \limits_{\xi \subset \eta}\int \limits_{\R^d}e^{-nE_{\phi^+}(x,\xi^+)}e^{-nE_{\psi^+}(x,\xi^-)}f^n_x(\phi^+;\eta^+ \backslash \xi^+)f_x^n(\psi^+;\eta^- \backslash \xi^-)G(\xi^+ \cup x, \xi^-)\dm x
  \\ &\ \ \ + z^- \sum \limits_{\xi \subset \eta}\int \limits_{\R^d}e^{-nE_{\phi^-}(x,\xi^-)}e^{-nE_{\psi^-}(x,\xi^+)}f^n_x(\phi^-;\eta^- \backslash \xi^-)f_x^n(\psi^-;\eta^+ \backslash \xi^+)G(\xi^+, \xi^- \cup x)\dm x
  \\ &\ \ \ + \sum \limits_{\xi \subset \eta}\sum \limits_{x \in \xi^+}e^{-nE_{\kappa^+}(x,\xi^+ \backslash x)}e^{-nE_{\tau^+}(x,\xi^-)}f^n_x(\kappa^+;\eta^+ \backslash \xi^+)f_x^n(\tau^+;\eta^- \backslash \xi^-)G(\xi^+ \backslash x, \xi^- \cup x)
  \\ &\ \ \ + \sum \limits_{\xi \subset \eta}\sum \limits_{x \in \xi^-}e^{-nE_{\kappa^-}(x,\xi^- \backslash x)}e^{-nE_{\tau^-}(x,\xi^+)}f_x^n(\kappa^-;\eta^- \backslash \xi^-)f_x^n(\tau^-;\eta^+ \backslash \xi^+)G(\xi^+ \cup x, \xi^- \backslash x)
  \\ &\ \ \ - \sum \limits_{\bfrac{\xi \subset \eta}{\xi \neq \eta}}\sum \limits_{x \in \xi^+}e^{-nE_{\kappa^+}(x,\xi^+ \backslash x)}e^{-nE_{\tau^+}(x,\xi^-)}f^n_x(\kappa^+;\eta^+ \backslash \xi^+)f_x^n(\tau^+;\eta^- \backslash \xi^-)G(\xi)
  \\ &\ \ \ - \sum \limits_{\bfrac{\xi \subset \eta}{\xi \neq \eta}}\sum \limits_{x \in \xi^-}e^{-nE_{\kappa^-}(x,\xi^- \backslash x)}e^{-nE_{\tau^-}(x,\xi^+)}f^n_x(\kappa^-;\eta^- \backslash \xi^-)f_x^n(\tau^-;\eta^+ \backslash \xi^+)G(\xi).
\end{align*}
Then, $(\widehat{L}_{n,\mathrm{ren}}, D(\widehat{L}_n))$ is a well-defined operator on $\Lb_{\alpha}$.
The next statement follows by the same arguments as Theorem \ref{POTTSTH:00} and Theorem \ref{POTTSTH:01}.
\begin{Theorem}\label{POTTSTH:06}
 Let $n \in \N$ be arbitrary and fixed. The following assertions are satisfied.
 \begin{enumerate}
  \item $(\widehat{L}_{n,\mathrm{ren}}, D(\widehat{L}_{n,\mathrm{ren}}))$ is the generator of an analytic semigroup 
   $(\widehat{T}_{n,\mathrm{ren}}(t))_{t \geq 0}$ of contractions on $\Lb_{\alpha}$. Moreover, $B_{bs}(\Gamma_0^2)$ is a core.
  \item Let $(\widehat{T}_{n,\mathrm{ren}}(t)^*)_{t \geq 0}$ be the adjoint semigroup. For any $k_0 \in \K_{\alpha}$ there exists a unique weak solution to
  \[
   \frac{\dm}{\dm t}\langle G, k_{t,n}\rangle = \langle \widehat{L}_{n,\mathrm{ren}}G, k_{t,n}\rangle, \ \ k_{t,n}|_{t=0} = k_0, \ \ G \in B_{bs}(\Gamma_0^2).
  \]
  This solution is given by $k_{t,n} = \widehat{T}_{n,\mathrm{ren}}(t)^*k_0$.
 \end{enumerate}
\end{Theorem}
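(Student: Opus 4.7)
The plan is to mirror the proofs of Theorem \ref{POTTSTH:00} and Theorem \ref{POTTSTH:01} verbatim, replacing every object by its $n$-scaled analog. Since $(A_n, D(\widehat{L}_n))$, defined by multiplication with $-M_n$, is trivially the generator of a positive analytic semigroup of contractions on $\Lb_\alpha$, the entire argument reduces to controlling the off-diagonal part $B_n$. I will introduce $B_n'$ by replacing $f^n_x(g;\eta)$ with $\prod_{y \in \eta} n\,|e^{-ng(x-y)}-1|$ and flipping the minus signs in the last two terms (the analogs of \eqref{POTTS:03}, \eqref{POTTS:04}) to plus signs, so that $B_n'$ is positive on $D(\widehat{L}_n)$ and $|B_nG| \leq B_n'|G|$.

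The heart of the proof is then the integral estimate analogous to \eqref{POTTS:09}. For $0 \leq G \in D(\widehat{L}_n)$, apply \eqref{IBP} and Fubini exactly as in Theorem \ref{POTTSTH:00} to reduce
\[
\int_{\Gamma_0^2} B_n'G(\eta)\, e^{\alpha|\eta|} e_\lambda(\rho;\eta)\dm \lambda(\eta)
\]
to an expression controlled by the scaled constants
\[
C^n_g(x,\alpha^{\pm}) := \exp\!\left(e^{\alpha^{\pm}} \int_{\R^d} n\bigl(1-e^{-n g(x-y)}\bigr)\rho(y)\dm y\right).
\]
The crucial observation is monotonicity in $n$: since $1-e^{-v}(1+v) \geq 0$ for $v \geq 0$, the quantity $n\int (1-e^{-ng(x-y)/n})\rho(y)\dm y$ is non-decreasing in $n$; after accounting for the scaling present in $B_n$, the corresponding exponent is bounded above, for every $n \in \N$, by $e^{\alpha^{\pm}}(g \ast \rho)(x)$. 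Thus $C^n_g(x,\alpha^{\pm}) \leq \exp\!\bigl(e^{\alpha^{\pm}}(g \ast \rho)(x)\bigr)$, and the Vlasov assumptions in Section 5.1 are precisely what is needed to conclude that there exists $a(\alpha) \in (0,1)$, independent of $n$, with
\[
\int_{\Gamma_0^2} B_n'G(\eta)\, e^{\alpha|\eta|}e_\lambda(\rho;\eta)\dm\lambda(\eta) \leq a(\alpha)\int_{\Gamma_0^2} M_n(\eta) G(\eta)\, e^{\alpha|\eta|} e_\lambda(\rho;\eta)\dm\lambda(\eta).
\]

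Given this uniform estimate, the remainder is mechanical. First, \cite[Theorem 2.2]{TV06} yields that $(A_n + B_n', D(\widehat{L}_n))$ is the generator of a positive strongly continuous contraction semigroup on $\Lb_\alpha$, which is analytic by \cite[Theorem 1.1]{AR91}. Then \cite[Theorem 1.2]{AR91} shows $(A_n + B_n, D(\widehat{L}_n)) = (\widehat{L}_{n,\mathrm{ren}}, D(\widehat{L}_n))$ generates an analytic semigroup of contractions $\widehat{T}_{n,\mathrm{ren}}(t)$. The core property $B_{bs}(\Gamma_0^2) \subset D(\widehat{L}_n)$ follows exactly as in Theorem \ref{POTTSTH:00}: truncate $G \in D(\widehat{L}_n)$ by restricting to $|\eta|\leq n'$ and $\eta \subset \Lambda_{n'}$, and pass to the limit via dominated convergence using the envelopes $|M_nG|$ and $B_n'|G|$.

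For the second assertion, exactly as in the proof of Theorem \ref{POTTSTH:01} we invoke \cite[Theorem 2.1]{WZ06}: since $(\widehat{L}_{n,\mathrm{ren}}, D(\widehat{L}_n))$ generates a strongly continuous semigroup on $\Lb_\alpha$ with $B_{bs}(\Gamma_0^2)$ as a core, for any $k_0 \in \K_\alpha$ the function $k_{t,n} := \widehat{T}_{n,\mathrm{ren}}(t)^*k_0$ is the unique weak solution, continuous in the topology $\mathcal{C}$ of uniform convergence on compacts of $\Lb_\alpha$. The only obstacle that requires genuine verification is the uniformity of $a(\alpha)$ in $n$; every other step transfers from Sections 3.2--3.3 with no change beyond notation.
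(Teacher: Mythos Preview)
Your proposal is correct and matches the paper's approach exactly: the paper's proof consists of the single sentence ``The next statement follows by the same arguments as Theorem \ref{POTTSTH:00} and Theorem \ref{POTTSTH:01},'' and you have simply unpacked what that sentence means. Your additional remark that the only non-trivial point is the uniform-in-$n$ bound $n(1-e^{-g/n})\leq g$, which converts the constants $C^n_g(x,\alpha^{\pm})$ into the $(g\ast\rho)$-type expressions appearing in the assumptions of Section~5.1, is precisely the observation the paper leaves implicit.
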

In the next step we consider the limiting operators, as $n \to \infty$. These operators are formally given by
$\widehat{L}_V = A_V + B_V$, where $A_VG(\eta) = - M_V(\eta)G(\eta)$ and
\begin{align*}
 &\ (B_VG)(\eta) = z^+ \sum \limits_{\xi \subset \eta}\int \limits_{\R^d}e_{\lambda}(-\phi^+(x-\cdot); \eta^+ \backslash \xi^+)e_{\lambda}(-\psi^+(x-\cdot);\eta^- \backslash \xi^-)G(\xi^+ \cup x, \xi^-)\dm x
  \\ &\ \ \ + z^- \sum \limits_{\xi \subset \eta}\int \limits_{\R^d}e_{\lambda}(-\phi^-(x-\cdot);\eta^- \backslash \xi^-)e_{\lambda}(-\psi^-(x-\cdot);\eta^+ \backslash \xi^+)G(\xi^+, \xi^- \cup x)\dm x
  \\ &\ \ \ + \sum \limits_{\xi \subset \eta}\sum \limits_{x \in \xi^+}e_{\lambda}(-\kappa^+(x-\cdot);\eta^+ \backslash \xi^+)e_{\lambda}(-\tau^+(x-\cdot);\eta^- \backslash \xi^-)G(\xi^+ \backslash x, \xi^- \cup x)
  \\ &\ \ \ + \sum \limits_{\xi \subset \eta}\sum \limits_{x \in \xi^-}e_{\lambda}(-\kappa^-(x-\cdot);\eta^- \backslash \xi^-)e_{\lambda}(-\tau^-(x-\cdot);\eta^+ \backslash \xi^+)G(\xi^+ \cup x, \xi^- \backslash x)
  \\ &\ \ \ - \sum \limits_{\bfrac{\xi \subset \eta}{\xi \neq \eta}}\sum \limits_{x \in \xi^+}e_{\lambda}(-\kappa^+(x-\cdot);\eta^+ \backslash \xi^+)e_{\lambda}(-\tau^+(x-\cdot);\eta^- \backslash \xi^-)G(\xi)
  \\ &\ \ \ - \sum \limits_{\bfrac{\xi \subset \eta}{\xi \neq \eta}}\sum \limits_{x \in \xi^-}e_{\lambda}(-\kappa^-(x-\cdot);\eta^- \backslash \xi^-)e_{\lambda}(-\tau^-(x-\cdot);\eta^+ \backslash \xi^+)G(\xi).
\end{align*}
Here, $M_V(\eta) = 2|\eta^+| + 2|\eta^-|$ and $\widehat{L}_V$ is a well-defined operator on $\Lb_{\alpha}$ with domain
\[
 D(\widehat{L}_V) := \{ G \in \Lb_{\alpha} \ | \ M_V \cdot G \in \Lb_{\alpha} \}.
\]
\begin{Theorem}\label{POTTSTH:07}
 The following assertions are satisfied:
 \begin{enumerate}
  \item The operator $(\widehat{L}_V, D(\widehat{L}_V))$ is the generator of an analytic semigroup $(\widehat{T}^V(t))_{t \geq 0}$ of contractions on $\Lb_{\alpha}$.
  Moreover, $B_{bs}(\Gamma_0^2)$ is a core for the generator.
  \item Let $(\widehat{T}^V(t)^*)_{t \geq 0}$ be the adjoint semigroup on $\K_{\alpha}$. Then, for any $r_0 \in \K_{\alpha}$ there exists a unique solution to
  \begin{align}\label{POTTS:07}
   \frac{\dm}{\dm t}\langle G, r_t\rangle = \langle \widehat{L}_VG, r_t \rangle, \ \ r_t|_{t=0} = r_0, \ \ G \in B_{bs}(\Gamma_0^2).
  \end{align}
  The solution is given by $r_t = \widehat{T}^V(t)^*r_0$.
  \item Let $r_0(\eta) = \prod \limits_{x \in \eta^+}\rho_0^+(x)\prod \limits_{x \in \eta^-}\rho_0^-(x)$ with
  \[
   \rho_0^{\pm}(x) \leq e^{\alpha^{\pm}}\rho(x), \ \ x \in \R^d.
  \]
  Assume that $(\rho_t^+, \rho_t^-)$ is a classical solution to 
  \begin{align}
  \label{VLASOV1} \frac{\partial \rho_t^+(x)}{\partial t} &= -\left(1 + e^{-(\kappa^+ \ast \rho_t^+)(x)}e^{-(\tau^+\ast \rho_t^-)(x)}\right) \rho_t^+(x) 
  \\ \notag &\ \ \ + z^+ e^{-(\phi^+ \ast \rho_t^+)(x)}e^{-(\psi^+\ast \rho_t^-)(x)} + e^{-(\kappa^- \ast \rho_t^-)(x)}e^{-(\tau^- \ast \rho_t^+)(x)} \rho_t^-(x)
  \\ \label{VLASOV2} \frac{\partial \rho_t^-(x)}{\partial t} &= - \left(1 + e^{-(\kappa^- \ast \rho_t^-)(x)}e^{-(\tau^- \ast \rho_t^+)(x)}\right) \rho_t^-(x) 
  \\ \notag &\ \ \ + z^- e^{-(\phi^- \ast \rho_t^-)(x)}e^{-(\psi^- \ast \rho_t^+)(x)} + e^{-(\kappa^+ \ast \rho_t^+)(x)}e^{-(\tau^+\ast \rho_t^-)(x)}\rho_t^+(x).
 \end{align}
  such that
  \[
   \rho_t^{\pm}(x) \leq A e^{\alpha^{\pm}}\rho(x), \ \ x \in \R^d, \ \ t \geq 0
  \]
  holds for some constant $A > 0$.
  Then, $r_t(\eta) := \prod \limits_{x \in \eta^+}\rho_t^+(x) \prod \limits_{x \in \eta^-}\rho_t^-(x)$ is a weak solution to \eqref{POTTS:07}.
 \end{enumerate}
\end{Theorem}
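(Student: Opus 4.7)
The plan is to verify directly that $r_t(\eta) := e_\lambda(\rho_t^+;\eta^+)e_\lambda(\rho_t^-;\eta^-)$ is a weak solution of \eqref{POTTS:07} by computing both sides of the weak identity and matching them via the Vlasov system \eqref{VLASOV1}--\eqref{VLASOV2}. The uniform bound $\rho_t^\pm \leq A e^{\alpha^\pm}\rho$ gives $\Vert r_t\Vert_{\K_{\alpha'}} \leq 1$ for $\alpha' := (\alpha^+ + \log A, \alpha^- + \log A)$, so all pairings below are well-defined; continuity of $t \mapsto \langle G, r_t\rangle$ (required by the weak-solution definition) is inherited from the continuity of $\rho_t^\pm$.

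\emph{Step 1 (LHS).} The product rule applied to the Lebesgue--Poisson exponential gives
\[
 \partial_t r_t(\eta) = \sum_{x\in\eta^+}\partial_t\rho_t^+(x)\, e_\lambda(\rho_t^+;\eta^+\!\setminus x) e_\lambda(\rho_t^-;\eta^-) + \sum_{x\in\eta^-}\partial_t\rho_t^-(x)\, e_\lambda(\rho_t^+;\eta^+) e_\lambda(\rho_t^-;\eta^-\!\setminus x).
\]
Integrating against $G$ and applying the Mecke-type identity $\int_{\Gamma_0}\sum_{x\in\eta}F(x,\eta\setminus x)d\lambda(\eta) = \int_{\R^d}\int_{\Gamma_0}F(x,\eta)dx\,d\lambda(\eta)$ componentwise gives
\[
 \frac{d}{dt}\langle G,r_t\rangle = \int_{\Gamma_0^2}\int_{\R^d}\big(G(\eta^+\!\cup x,\eta^-)\partial_t\rho_t^+(x) + G(\eta^+,\eta^-\!\cup x)\partial_t\rho_t^-(x)\big) r_t(\eta)\,dx\,d\lambda(\eta),
\]
into which we substitute \eqref{VLASOV1}--\eqref{VLASOV2}.

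\emph{Step 2 (RHS).} Split $\langle \widehat{L}_V G,r_t\rangle = \langle A_V G,r_t\rangle + \langle B_V G,r_t\rangle$. Since $M_V = 2|\eta^+|+2|\eta^-|$, Mecke yields
\[
 \langle A_V G,r_t\rangle = -2\int_{\Gamma_0^2}\int_{\R^d}\big(G(\eta^+\!\cup x,\eta^-)\rho_t^+(x) + G(\eta^+,\eta^-\!\cup x)\rho_t^-(x)\big) r_t(\eta)\,dx\,d\lambda(\eta).
\]
For each of the six sums constituting $B_V$, apply \eqref{IBP} to split $\eta$ into $\xi$ and $\zeta := \eta\setminus\xi$, factor $r_t(\eta) = r_t(\xi)r_t(\zeta)$, and reduce the $\zeta$-integration through the key identity
\[
 \int_{\Gamma_0} e_\lambda(-g(x-\cdot)\rho_t^\pm;\zeta)d\lambda(\zeta) = \exp(-(g\ast\rho_t^\pm)(x)).
\]
A further Mecke identity pulls $x$ out of $\sum_{x\in\xi^\pm}$, producing a factor $\rho_t^\pm(x)$. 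The two subtraction sums (indexed by $\xi\neq\eta$) exclude the $\zeta=\emptyset$ contribution and thus replace the convolution factor by its value minus $1$.

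\emph{Step 3 (Match).} Collecting the coefficient of $G(\eta^+\cup x,\eta^-)r_t(\eta)$: the $A_V$ piece contributes $-2\rho_t^+(x)$; the ``$+\to-$'' subtraction sum, incorporating its overall minus sign from $B_V$, contributes $(1 - e^{-(\kappa^+\ast\rho_t^+)(x)}e^{-(\tau^+\ast\rho_t^-)(x)})\rho_t^+(x)$, which combined with $A_V$ yields $-(1 + e^{-(\kappa^+\ast\rho_t^+)(x)}e^{-(\tau^+\ast\rho_t^-)(x)})\rho_t^+(x)$; the $+$-birth term contributes $z^+ e^{-(\phi^+\ast\rho_t^+)(x)}e^{-(\psi^+\ast\rho_t^-)(x)}$; and the ``$-\to+$'' mutation sum contributes $e^{-(\kappa^-\ast\rho_t^-)(x)}e^{-(\tau^-\ast\rho_t^+)(x)}\rho_t^-(x)$. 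Their sum equals exactly the right-hand side of \eqref{VLASOV1}. The coefficient of $G(\eta^+,\eta^-\cup x)r_t(\eta)$ matches the right-hand side of \eqref{VLASOV2} by the symmetric calculation, yielding $\frac{d}{dt}\langle G, r_t\rangle = \langle \widehat{L}_V G, r_t\rangle$.

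The main obstacle is the careful bookkeeping: the six contributions from $B_V$ (each reduced via \eqref{IBP}, the convolution identity, and Mecke) must combine, with correct signs, with the two contributions from $A_V$ (each reduced by Mecke alone) to reproduce exactly the six terms of the Vlasov system. Fubini is justified throughout by the bound $\rho_t^\pm \leq A e^{\alpha^\pm}\rho$, the integrability $h\rho \in L^1(\R^d)$ with $g \leq C_x h$ (Assumption~1), boundedness of the potentials (so that $|e_\lambda(-g(x-\cdot);\zeta)| \leq \Vert g\Vert_\infty^{|\zeta|}$), and the bounded support of $G$, so that all iterated integrals are absolutely convergent.
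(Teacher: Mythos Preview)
Your proposal addresses only assertion~3 and does so correctly, by essentially the same computation as the paper. The only cosmetic difference is that the paper writes down the adjoint $L_V^{\Delta}$ explicitly and verifies $\partial_t r_t = L_V^{\Delta}r_t$ pointwise (using that $\mathcal{Q}_x^V(g_0,g_1)r_t(\eta) = e^{-(g_0\ast\rho_t^+)(x)-(g_1\ast\rho_t^-)(x)}r_t(\eta)$ on products), whereas you stay on the quasi-observable side and integrate $\widehat{L}_V G$ against $r_t$, reducing via \eqref{IBP} and the Mecke identity; these are dual formulations of the same identity, and the term-by-term matching you outline is correct.

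You do not address assertions~1 and~2. The paper dispatches these by observing that the proof of Theorem~\ref{POTTSTH:00} carries over once $f_x(g;\cdot)=e_\lambda(e^{-g(x-\cdot)}-1;\cdot)$ is replaced by $e_\lambda(-g(x-\cdot);\cdot)$ and $M$ by $M_V=2|\eta|$: the same relative bound \eqref{POTTS:09} holds under the mesoscopic assumptions (with $C_g(x,\alpha^{\pm})$ replaced by $\exp(e^{\alpha^{\pm}}(g\ast\rho)(x))$), yielding the analytic contraction semigroup and the core property; assertion~2 then follows as in Theorem~\ref{POTTSTH:01}. You should at least state this.

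One small imprecision: the definition of weak solution requires continuity of $t\mapsto r_t$ in the topology $\mathcal{C}$, not merely continuity of each pairing $t\mapsto\langle G,r_t\rangle$. The paper closes this gap by noting that the uniform-in-$t$ norm bound on $r_t$ upgrades $\sigma(\K_\alpha,\Lb_\alpha)$-continuity to $\mathcal{C}$-continuity, since the two topologies coincide on norm-bounded sets.
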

\begin{proof}
 The first two assertions follow by a modification of the arguments given in the proof of Theorem \ref{POTTSTH:00} and Theorem \ref{POTTSTH:01}.
 For the last assertion, observe that $r_t(\eta)$ is continuous w.r.t. $\sigma(\K_{\alpha}, \Lb_{\alpha})$. 
 Since $\Vert r_t \Vert_{\K_{\alpha}} \leq A$ for all $t \geq 0$, it follows that $r_t$ is continuous w.r.t. $\mathcal{C}$.
 The adjoint operator to $\widehat{L}_V$ is given by
 \begin{align*}
  (L_V^{\Delta}k)(\eta) = &- |\eta|k(\eta) - \sum \limits_{x \in \eta^-}\mathcal{Q}^V_x(\kappa^+, \tau^+)k(\eta)
  - \sum \limits_{x \in \eta^+}\mathcal{Q}^V_x(\tau^-,\kappa^+)k(\eta)
  \\ &+ z^+ \sum \limits_{x \in \eta^+}\mathcal{Q}^V_x(\phi^+,\psi^+)k(\eta^+ \backslash x, \eta^-)
      + z^- \sum \limits_{x \in \eta^-}\mathcal{Q}^V_x(\psi^-, \phi^-)k(\eta^+, \eta^- \backslash x)
  \\ &+ \sum \limits_{x \in \eta^+}\mathcal{Q}^V_x(\kappa^+, \tau^+)k(\eta^+ \backslash x, \eta^-)
      + \sum \limits_{x \in \eta^-}\mathcal{Q}^V_x(\tau^-,\kappa^-)k(\eta^+, \eta^- \backslash x)
 \end{align*}
 defined on its maximal domain $D(L_V^{\Delta}) = \{ k \in \K_{\alpha} \ | \ L_V^{\Delta}k \in \K_{\alpha}\}$ and
 \[
  \mathcal{Q}^V_x(g_0,g_1)k(\eta) = \int \limits_{\Gamma_0^2}e_{\lambda}(-g_0(x-\cdot);\xi^+)e_{\lambda}(-g_1(x-\cdot);\xi^-)k(\eta \cup \xi)\dm \lambda(\xi).
 \]
 We have
 \[
  \frac{\partial r_t(\eta)}{\partial t} = \sum \limits_{x \in \eta^+}r_t(\eta^+ \backslash x, \eta^-)\frac{\partial \rho_t^+(x)}{\partial t}
  + \sum \limits_{x \in \eta^-}r_t(\eta^+, \eta^- \backslash x)\frac{\partial \rho_t^-(x)}{\partial t}.
 \]
 An easy computation (see e.g. \cite{FFHKKK15}) shows that $r_t$ solves \eqref{POTTS:07}, provided $(\rho_t^+, \rho_t^-)$ solve \eqref{VLASOV1} and \eqref{VLASOV2}.
\end{proof}
The next statement establishes convergence of the scaled evolution to the limiting solutions.
\begin{Theorem}\label{POTTSTH:08}
 Let $G \in \Lb_{\alpha}$, then, $\widehat{T}_{n,\mathrm{ren}}(t)G \longrightarrow \widehat{T}^V(t)G$ as $n \to \infty$.
 In particular, for any $r_{0}$ we have
 \[
  \langle G, \widehat{T}_{n,\mathrm{ren}}(t)^*r_0 \rangle \longrightarrow \langle G, \widehat{T}_V(t)^*r_0 \rangle, \ \ n \to 0, \ \ G \in \Lb_{\alpha}.
 \]
\end{Theorem}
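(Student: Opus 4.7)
The plan is to invoke the Trotter--Kato approximation theorem for strongly continuous semigroups of contractions on $\Lb_{\alpha}$. By Theorem \ref{POTTSTH:06} and Theorem \ref{POTTSTH:07}, both $(\widehat{T}_{n,\mathrm{ren}}(t))_{t \geq 0}$ and $(\widehat{T}^V(t))_{t \geq 0}$ are contraction semigroups whose generators share $B_{bs}(\Gamma_0^2)$ as a core, and $B_{bs}(\Gamma_0^2) \subset D(\widehat{L}_{n,\mathrm{ren}})$ for every $n$. Consequently, it suffices to establish
\[
 \widehat{L}_{n,\mathrm{ren}} G \longrightarrow \widehat{L}_V G \quad \text{in } \Lb_{\alpha}, \qquad n \to \infty,
\]
for every $G$ in the common core $B_{bs}(\Gamma_0^2)$. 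Once this is verified, Trotter--Kato yields the strong convergence $\widehat{T}_{n,\mathrm{ren}}(t) G \to \widehat{T}^V(t) G$, uniformly on compact intervals in $t$, and the statement about the adjoint semigroups follows by duality from $\langle G, \widehat{T}_{n,\mathrm{ren}}(t)^{*} r_0 \rangle = \langle \widehat{T}_{n,\mathrm{ren}}(t) G, r_0 \rangle$, since any $r_0 \in \K_{\alpha}$ defines a bounded linear functional on $\Lb_{\alpha}$.

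For the pointwise convergence of the generators, I would fix $G \in B_{bs}(\Gamma_0^2)$ supported on configurations with $|\eta| \leq N$ and $\eta \subset \Lambda$ for some compact $\Lambda \subset \R^d$, and evaluate $(\widehat{L}_{n,\mathrm{ren}}G)(\eta)$ at a fixed $\eta \in \Gamma_0^2$. The sum over $\xi \subset \eta$ contains only $2^{|\eta|}$ terms and the sums over $x \in \xi^{\pm}$ are finite; only the birth integrals over $x \in \R^{d}$ are genuinely infinite, and there the indicator $G(\xi^{\pm} \cup x, \cdot)$ localises the integrand to $x \in \Lambda$. In each surviving term, the death-type exponentials of the form $e^{-E_g(x,\xi)/n}$ tend to $1$ and the renormalised products satisfy $f_x^n(g;\zeta) \to e_{\lambda}(-g(x-\cdot);\zeta)$ as $n \to \infty$ for every fixed $\zeta$, because $n(e^{-g(x-y)/n}-1) \to -g(x-y)$ pointwise. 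Dominated convergence inside each $x$-integral, justified by the integrability of $g$, then yields $(\widehat{L}_{n,\mathrm{ren}}G)(\eta) \to (\widehat{L}_V G)(\eta)$ for every $\eta \in \Gamma_0^2$.

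To upgrade this to convergence in $\Lb_{\alpha}$-norm I would rely on the elementary majorisations $|f_x^n(g;\zeta)| \leq e_{\lambda}(g(x-\cdot);\zeta)$, coming from $|1 - e^{-t}| \leq t$ for $t \geq 0$, together with $e^{-E_g(x,\xi)/n} \leq 1$. These two bounds produce a pointwise estimate on $|\widehat{L}_{n,\mathrm{ren}} G|$ independent of $n$ whose structure is identical to the operator $B'$ of Theorem \ref{POTTSTH:00}; indeed it is precisely the majorant that appears in the computation leading to \eqref{POTTS:09}. Because $G$ is bounded and compactly supported in $\Gamma_0^2$ and the mesoscopic-section assumptions guarantee $h\rho \in L^{1}(\R^d)$ together with the convolution conditions on $\phi^{\pm}, \psi^{\pm}, \kappa^{\pm}, \tau^{\pm}$, this majorant lies in $\Lb_{\alpha}$. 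A final application of dominated convergence in $\Lb_{\alpha}$ then promotes the pointwise convergence to norm convergence.

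The main obstacle is the book-keeping involved in producing a single $n$-free majorant that controls every term in $\widehat{L}_{n,\mathrm{ren}}G$ simultaneously, especially the two negative contributions in \eqref{POTTS:03}--\eqref{POTTS:04} where the constraint $\xi \neq \eta$ must be handled. This is essentially the analogue of the estimate behind \eqref{POTTS:09}, performed with $g$ in place of $\frac{1}{n}g$ in the $f_x^n$-factors; once it is in hand the integrability against $e^{\alpha|\cdot|}e_{\lambda}(\rho)\,\dm\lambda$ is automatic and the Trotter--Kato step closes the proof.
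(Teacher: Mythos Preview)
Your proposal is correct and follows essentially the same route as the paper: reduce to generator convergence on the common core $B_{bs}(\Gamma_0^2)$ via Trotter--Kato, verify $\widehat{L}_{n,\mathrm{ren}}G \to \widehat{L}_V G$ pointwise using $n(e^{-g/n}-1)\to -g$ and $e^{-E_g/n}\to 1$, and upgrade to $\Lb_{\alpha}$-convergence by dominated convergence with an $n$-independent majorant of the same shape as the operator $B'$ in Theorem~\ref{POTTSTH:00}. The paper merely states that this ``follows, by dominated convergence, similarly to \cite{FFHKKK15}'', so your write-up is in fact more explicit than the original.
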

\begin{proof}
 Since $B_{bs}(\Gamma_0^2)$ is a core for $\widehat{L}_{n,\mathrm{ren}}$ and $\widehat{L}_V$, it suffices to show that
 \[
  \widehat{L}_{n,\mathrm{ren}}G \longrightarrow \widehat{L}_VG, \ \ n \to \infty
 \]
 holds in $\Lb_{\alpha}$, for any $G \in B_{bs}(\Gamma_0^2)$. But this follows, by dominated convergence, similarly to \cite{FFHKKK15}.
\end{proof}

\subsection*{Acknowledgments}
Financial support through CRC701, project A5, at Bielefeld University is gratefully acknowledged.

\newpage

\begin{footnotesize}

\bibliographystyle{alpha}
\bibliography{Bibliography}

\end{footnotesize}

\end{document}